\DeclareMathOperator{\gal}{Gal}
\DeclareMathOperator{\pic}{Pic}
\DeclareMathOperator{\nm}{Nm}
\DeclareMathOperator{\Sp}{Sp}
\DeclareMathOperator{\gsp}{Gsp} 
\DeclareMathOperator{\im}{Im}
\theoremstyle{plain}
\newtheorem{thm}{Theorem}[section]
\newtheorem{theorem}[thm]{Theorem}
\newtheorem{lemma}[thm]{Lemma}
\newtheorem{corollary}[thm]{Corollary}
\theoremstyle{definition}
\newtheorem{remark}[thm]{Remark}
\newtheorem{definition}[thm]{Definition}
\numberwithin{equation}{thm}
\newcommand{\sC}{{\mathcal C}}
\newcommand{\sD}{{\mathcal D}}
\newcommand{\sE}{{\mathcal E}}
\newcommand{\sL}{{\mathcal L}}
\newcommand{\sO}{{\mathcal O}}
\newcommand{\sP}{{\mathcal P}}
\newcommand{\sU}{{\mathcal U}}
\newcommand{\C}{{\mathbb C}}
\newcommand{\D}{{\mathbb D}}
\renewcommand{\H}{{\mathbb H}}
\renewcommand{\P}{{\mathbb P}}
\newcommand{\Q}{{\mathbb Q}}
\newcommand{\R}{{\mathbb R}}
\newcommand{\V}{{\mathbb V}}
\newcommand{\Z}{{\mathbb Z}}
\newcommand{\rk}{{\rm rank}}
\newcommand{\Hom}{{\rm Hom}}
\title[]{On the Existence of Shimura curves in the Prym locus of abelian covers of projective line}
\author{Abolfazl Mohajer}
\email{abmohajer83@gmail.com}
\subjclass[2010]{14G35, 14H15, 14H40}
\keywords{Shimura variety, Prym variety}
\begin{document}
\begin{abstract}
Using the theory of Higgs bundles and their stabitlity properties associated to fibered surfaces and the Viehweg-Zuo characterization of Shimura curves in the moduli space of abelian varieties in terms of Higgs bundles, we prove that there does not exist any non-compact Shimura curves in the Prym locus of totally ramified $\Z_{2p}$- or $\Z_{2p}\times (\Z_{p})^{m-1}$-covers of the projective line in $A_{g}$ for $g\geq 8$, where $p\geq  5$ is a prime number. 
\end{abstract}
	
\maketitle
	
\section{Introduction}
In this paper we prove non-existence results for non-compact Shimura curves contained generically in the Prym locus of abelian covers of the projective line. A list of such Shimura curves was found in the paper \cite{CFGP} by E. Colomobo, P. Frediani, A. Ghigi and M. Penegin. Further interesting results in this direction can be found also in the paper \cite{CF} by E. Colomobo and P. Frediani. Let us explain the general framework of the problems that we consider: Let $C$ be a smooth projective curve of genus $g$ and $\eta\in \pic^0(C)$ an element of order two in the Picard group  i.e., $\eta\neq \mathcal{O}_C$ but $\eta^2=\mathcal{O}_C$. This element, viewed as a line bundle or a divisor on $C$ yields an  \'etale double cover $h:\tilde{C}\to C$. Using the morphism $h$, we obatin the so-called \emph{norm map} $\nm:\pic^0(\tilde{C})\to \pic^0(C)$ given by $\sum r_ix_i\to \sum r_ih(x_i)$. Consider the isomorphism class $[C,\eta]$ of the pair $(C,\eta)$. There is an abelian variety of dimension $g-1$ associated to this isomorphism class. It is defined to be the connected component of the kernel $\ker\nm$ containing the identity. We denote this abelian varitey by $P(C,\eta)$ or $P(\tilde{C}, C)$ and call it the \emph{Prym variety} associated to the class $[C,\eta]$ or to the double cover $h:\tilde{C}\to C$. There is a moduli space $R_g$ for the isomorphism classes $[C,\eta]$. In this paper (as in \cite{CFGP}) we also consider the case where the double cover $h$ is ramified above exactly two poins. In this case, there is a divisor $B$ of degree 2 on the smooth curve $C$ that is in the linear series $|\eta^2|$ and we consider the isomorphism classes $[C,B,\eta]$ which are parametrized by a scheme denoted by $R_{g,2}$. We can define the Prym variety $P(\tilde{C}, C)$ in the analogous way as above also in this case. It is an abelian variety of dimension $g$. Let $A_g$ be the moduli space of complex principally polarized abelian varieties. In the two cases above, the Prym variety is principally polarized and there is a map $\mathscr{P}:R_{g}\to A_{g-1}$ (resp. $\mathscr{P}:R_{g,2}\to A_g$) which assings to a pair $[C,\eta]$ (resp. $[C,B,\eta]$) the Prym variety $P(\tilde{C}, C)$. We call this map the \emph{unramified}(resp. \emph{ramified}) \emph{Prym map}. In both of the above cases the paper \cite{CFGP} gives various examples of one-parameter families $(C_t,\eta_t)$ ($t\in T=\P^1\setminus\{0,1,\infty\}$) for which the image $\mathscr{P}(T)$ is a Shimura curve in $A_{g-1}$ or $A_{g}$. The families that the authors consider are families of Galois coverings $C_t\to\mathbb{P}^{1}$. Note that, following \cite{CFGP}, we denote the image of  $T$ in $ R_g$ (resp. $R_{g,2}$) via the assignment $t\mapsto [C_t,\eta_t]$ (resp. $[C,B,\eta]$) also by $T$ see also section 2.

More precisely, the families that authors consider are of the following form: consider families of Galois covers $\tilde{C_t}\to \mathbb{P}^{1}$ with abelian Galois group $\tilde{G}$ and an  involution $\sigma$ such that the double covering $\tilde{C_t}\to \tilde{C_t}/\langle \sigma\rangle$ is either \'etale or branched above two (distinct) points. The Galois covering $\tilde{C_t}\to \mathbb{P}^{1}$ is determined by an epimorphism of groups, where $s$ is the number of branch points $\tilde{\theta}:\Gamma_s\to\tilde{G}$ with branch points $t_1,\dots,t_s\in \mathbb{P}^{1}$. Indeed the group $\Gamma_s$ is isomorphic to the fundamental group of $\mathbb{P}^{1}\setminus\{t_1,\dots,t_s\}$ and hence has the following presentation.
\begin{equation}\label{presentation}
\Gamma_s=\langle\gamma_1, \ldots, \gamma_s|\gamma_1\cdots\gamma_s=1\rangle
\end{equation}


 In \cite{CFGP} one then finds 52 examples of families $R(\tilde{G},\tilde{\theta},\sigma)$ such that the Zariski closure $\overline{\mathscr{P}(T)}$ of the image of Prym map is a Shimura curve in $A_{g-1}$ (resp. $A_{g}$).\\

Our main objects of interest in this article are families  $f:\tilde{\sC}\to T$ of abelian coverings of $\mathbb{P}^{1}$ which are obtained by varying the branch points of the covers (for a fixed matrix $A$ as mentioned above). Hence we get a family in $R_g$ (resp. $R_{g,2}$) which is indeed the image of $T$ in $R_g$ (resp. $R_{g,2}$) and we shall denote it again by $T$ in the above.  We consider the general case where the variety $T$ is of dimension $s-3$, so in this paper, unlike \cite{CFGP}, we do not assume that $T$ is 1-dimensional or in other words that $s=4$. Note that for each fiber, which is itself a covering $\tilde{C_t}\to \mathbb{P}^{1}$, we obtain the following diagramm by taking the quoteint by the action of $\sigma$.

\[\begin{tikzcd}[row sep=4em, column sep=2em]
    \tilde{C_t} \arrow[rr, "\pi_t"] \arrow[swap, dr] & & C_t=\tilde{C_t}/\langle\sigma\rangle \arrow[dl] \\
    & \P^1\cong\tilde{C_t}/\tilde{G}\cong C_t/G \\[-4.6em]
\end{tikzcd}\]

In this paper, we study Shimura curves in the Prym locus of abelian $\tilde{G}$-covers of $\P^1$. We then show that for $\Z_{2p}$- or $\Z_{2p}\times (\Z_{p})^{m-1}$-covers of the projective line, the Zariski closure $Z=\overline{\mathscr{P}(T)}$ does not contain Shimura curves in $A_g$ if $g>8$. We remark that in \cite{CF} upper bounds for the dimension of a germ of a totally geodesic submanifold, and hence for the dimension of a special subvariety in the Prym locus are proven.\\

 In section $2$, we construct abelian covers of $\mathbb{P}^{1}$ with a method different than the one in \cite{CFGP}. We also construct Prym varieties and  Prym maps of the families that we consider in which we follow \cite{CFGP}. Fix an $m \times s$ matrix $A$ with entries in $(\mathbb{Z}/N\mathbb{Z})$ (for fixed $m$ and $s$). To each $s$-tuple $t=(z_{1},\dots,z_{s})\in (\mathbb{A}^{1}_{\mathbb{C}})^{s}$ there corresponds a Galois covering $\tilde{C_t}\rightarrow \mathbb{P}^{1}$ branched at the points $z_{j}$ with local monodromies given by the matrix $A$ and an abelian Galois group $\tilde{G}$  isomorphic to the column span of the matrix $A$. The group  $\tilde{G}$ is therefore a subgroup of $(\mathbb{Z}/N\mathbb{Z})^{m}$. The group action induces a corresponding action on the cohomology of the fibers. Using our description of abelian coverings of projective line, we can concretely compute the $\tilde{G}$ eigenspaces and therefore the eigenspaces of the $\sigma$-action which are crucial for our later computations related to Higgs bundles. \\

In paper \cite{Moh21}, the author has considered the same problem for higher dimensional Shimura subvarities of the Prym locus. The methods that we used in that paper, enabled us to prove that when $s$ is large enough, then the image of $T$ is not a special subvariety. However, the methods in \cite{Moh21} cannot exclude Shimura curves, i.e., one-dimensional Shimura subvarieties which are much harder to deal with. In this paper, we use the methods of \emph{Higgs bundles} to achieve this. In Section \ref{Shimura&Higgs}, the Higgs bundles are introduced and the Higgs bundle associated to a family of Prym varieties are studied.  The relation of Higgs bundles and Shimura curves are also explained in this section and we develop criterions for a family of covers of $\P^1$ to generate Shimura families in the Prym locus $\sP_g$. These condition will subsequently used in Section \ref{Shimura in Prym} to exclude Shimura curves in the Prym locus of cyclic and abelian covers of $\P^1$. The cyclic $\Z_p$-covers of $\P^1$ are \emph{totally ramified}, namely above each branch point there is a single ramification point. The abelian covers of $\P^1$ that we consider are totally ramified in the sense that in the matrix of the covering (see section \ref{prelim} below for the construction of abelian covers) each column has exactly one 1 entry and all other entries are 0.

\section{preliminaries on abelian covers of $\mathbb{P}^{1}$ and the Prym map} \label{prelim}
The notation and treatment of this section comes from \cite{CFGP} and also \cite{MZ} which follows that of \cite{W}. For a comprehensive explanation of construction of Prym varieties and the Prym map we refer the reader to \cite{BL}. \\

An abelian Galois cover of $\mathbb{P}^{1}$ is determined by a collection of equations as follows:\\

Fix inetegers $m$ and $s$ and let $A=(r_{ij})$ be an $m\times s$ matrix whose entries $r_{ij}$ are in $\mathbb{Z}/N\mathbb{Z}$ for some $N\geq 2$. Let $\overline{\mathbb{C}(z)}$ be the algebraic closure of $\mathbb{C}(z)$. For each $i=1,...,m,$ choose a function $w_{i}\in\overline{\mathbb{C}(z)}$ with
\begin{equation}\label{abelian equation}
w_{i}^{N}=\prod_{j=1}^{s}(z-z_{j})^{\widetilde{r}_{ij}}\text{ for }i=1,\dots, m,
\end{equation}
in $\mathbb{C}(z)[w_{1},\dots,w_{m}]$. We have denoted the lift of $r_{ij}$ to $\Z\cap [0,N)$ by $\widetilde{r}_{ij}$ and $z_j\in \C$ for $j=1,2,\dots, s$. The equations \ref{abelian equation} define only a singular affine curve in general and we consider a smooth projective model associated to this affine curve. The matrix $A$ should have the following property: the sum of the columns of $A$ is zero (when considered as a vector in $(\mathbb{Z}/N\mathbb{Z})^m$). This condition ensures that the curve defined by \ref{abelian equation} is \emph{not} branched over the infinity. The matrix $A$ is called the matrix of the covering.

The local monodromy around the branch point $z_{j}$ is given by the column vector $(r_{1j},\dots, r_{mj})^{t}$ and therefore the order of ramification over $z_{j}$ is equal to $\frac{N}{\gcd(N,\widetilde{r}_{1j},\dots,\widetilde{r}_{mj})}$. This together with the Riemann-Hurwitz formula yield the following theorem to compute the genus $g$ of the cover:
\begin{theorem}
Let $\tilde{C}\to \mathbb{P}^{1}$ be an abelian $\tilde{G}$-cover as above. Then the group $\tilde{G}$ acts on the space $H^0(\tilde{C},K_{\tilde{C}})$ and the dimension of the eigenspace $H^0(\tilde{C},K_{\tilde{C}})_n$ related to $n\in\tilde{G}$ is given by 
\begin{equation}\label{eigspace}
d_{n}=\dim H^0(\tilde{C},K_{\tilde{C}})_n=-1+\sum_{1}^{s}\langle - \frac{\alpha_{j}}{N}\rangle
\end{equation}
Also the genus of the cover is given by

\begin{equation}\label{genus formula}
\tilde{g}= 1+ d(\frac{s-2}{2}-\frac{1}{2N}\sum_{j=1}^{s}\gcd(N,\widetilde{r}_{1j},\dots,\widetilde{r}_{mj})),
\end{equation}
Here $d$ is the degree of the covering which is equal, as pointed out above, to the column span (equivalently row span)  of the matrix $A$. In this way, the Galois group $\tilde{G}$ of the covering will be a subgroup of $(\mathbb{Z}/N\mathbb{Z})^{m}$. Note also that this group is isomorphic to the column span of the above matrix. 
\end{theorem}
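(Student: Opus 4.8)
The plan is to decompose the $\tilde G$-module $H^{0}(\tilde C,K_{\tilde C})$ into character eigenspaces, to realise each eigenspace as the global sections of an explicit line bundle on $\P^{1}$ whose degree is read off from the columns of the matrix $A$, and then to deduce the genus either from the Riemann--Hurwitz formula or by summing the eigenspace dimensions.

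Write $\pi\colon\tilde C\to\P^{1}$ for the cover. Deck transformations make $\pi_{*}\sO_{\tilde C}$ a $\tilde G$-equivariant locally free sheaf, and since $\tilde G$ is abelian it splits as $\pi_{*}\sO_{\tilde C}=\bigoplus_{n}L_{n}^{-1}$, the sum taken over the characters $n$ of $\tilde G$, with each $L_{n}$ invertible on $\P^{1}$ and $L_{0}=\sO_{\P^{1}}$. The key point is to compute $\deg L_{n}$, and this is purely local. Near a branch point $z_{j}$ the cover is cyclic of order $e_{j}=N/\gcd(N,\widetilde r_{1j},\dots,\widetilde r_{mj})$ with a distinguished local inertia generator, on which the character $n$ acts through a residue $\alpha_{j}\in\Z/N\Z$ obtained by pairing $n$ with the $j$-th column $(r_{1j},\dots,r_{mj})^{t}$ of $A$; comparing a trivialisation of the eigen-coordinate away from the branch locus with the local normal form at $z_{j}$ shows that $z_{j}$ contributes the fractional part $\langle\alpha_{j}/N\rangle$ to $\deg L_{n}$, so $\deg L_{n}=\sum_{j=1}^{s}\langle\alpha_{j}/N\rangle$. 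Because the columns of $A$ sum to zero, $\sum_{j}\alpha_{j}\equiv 0\pmod N$ and this number is an integer; for $n\ne 0$ it is moreover $\ge 1$, since a character annihilating every local inertia group would be trivial (those inertia groups generate $\tilde G$, the cover being connected and unramified over $\P^{1}\setminus\{z_{1},\dots,z_{s}\}$).

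Next, by relative duality for the finite flat morphism $\pi$ one has $\pi_{*}K_{\tilde C}\cong K_{\P^{1}}\otimes(\pi_{*}\sO_{\tilde C})^{\vee}$; dualising replaces the character $n$ by $-n$, so the $n$-isotypic summand of $\pi_{*}K_{\tilde C}$ is $K_{\P^{1}}\otimes L_{-n}$ with $\deg L_{-n}=\sum_{j=1}^{s}\langle-\alpha_{j}/N\rangle$, which is exactly the source of the minus sign in \eqref{eigspace}. Hence $H^{0}(\tilde C,K_{\tilde C})_{n}\cong H^{0}\!\big(\P^{1},\sO_{\P^{1}}(-2+\deg L_{-n})\big)$, and since $\deg L_{-n}\ge 1$ for $n\ne 0$, Riemann--Roch on $\P^{1}$ gives
\[
d_{n}=\dim H^{0}(\tilde C,K_{\tilde C})_{n}=\deg L_{-n}-1=-1+\sum_{j=1}^{s}\Big\langle-\frac{\alpha_{j}}{N}\Big\rangle,
\]
which is \eqref{eigspace} (and $d_{0}=h^{0}(\P^{1},K_{\P^{1}})=0$). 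For the genus I would apply Riemann--Hurwitz directly: $\pi$ has degree $d$, is branched only over the $z_{j}$, and over $z_{j}$ there are $d/e_{j}=d\,\gcd(N,\widetilde r_{1j},\dots,\widetilde r_{mj})/N$ points each with ramification index $e_{j}$, so
\[
2\tilde g-2=-2d+\sum_{j=1}^{s}\Big(d-\frac{d}{N}\gcd(N,\widetilde r_{1j},\dots,\widetilde r_{mj})\Big),
\]
which rearranges immediately to \eqref{genus formula}. Alternatively one may sum $d_{n}$ over all $n$: for fixed $j$ the residues $\alpha_{j}$ run $d/e_{j}$ times over the cyclic subgroup of order $e_{j}$ in $\Z/N\Z$, and the elementary identity $\sum_{k=0}^{e-1}\langle-k/e\rangle=(e-1)/2$ reproduces the same answer.

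The step I expect to be the real obstacle is the local computation of $\deg L_{n}$: making the orientation conventions precise enough that pairing $n$ with the $j$-th column of $A$ yields exactly the residue $\alpha_{j}$ governing the vanishing of the eigen-coordinate at $z_{j}$, and checking that characters which annihilate part of the local inertia (so that $L_{n}$ has small degree) still obey the uniform formula. Once this local dictionary is in place, the remaining steps are formal.
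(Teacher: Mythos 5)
Your argument is correct, and it fills in more than the paper actually supplies: the paper states the theorem without a proof environment, deriving the genus formula from the remark that the local monodromy at $z_j$ is the $j$-th column of $A$ (so the ramification order there is $N/\gcd(N,\widetilde r_{1j},\dots,\widetilde r_{mj})$) together with Riemann--Hurwitz --- exactly your computation --- and taking the eigenspace formula from \cite{MZ}, where it is obtained by exhibiting the explicit basis $\omega_{n,\nu}=z^{\nu}w_1^{n_1}\cdots w_m^{n_m}\prod_j(z-z_j)^{\lfloor -\tilde\alpha_j/N\rfloor}\,dz$ of $H^0(\tilde C,K_{\tilde C})_n$ and counting the admissible exponents $\nu$. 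Your route to \eqref{eigspace} is genuinely different: instead of writing down eigen-differentials you decompose $\pi_*\sO_{\tilde C}=\bigoplus_n L_n^{-1}$ with $\deg L_n=\sum_j\langle\alpha_j/N\rangle$ (Pardini/Esnault--Viehweg style), pass to $\pi_*K_{\tilde C}$ by relative duality (which is where the sign $-n$, hence $\langle-\alpha_j/N\rangle$, enters), and apply Riemann--Roch on $\P^1$; you correctly secure the needed inequality $\deg L_{-n}\ge 1$ for $n\ne 0$ from the fact that the columns of $A$ generate $\tilde G$, and you correctly exclude $n=0$, a caveat the paper leaves implicit. The explicit-basis approach buys concrete generators of each eigenspace (which the paper needs later to describe the $\sigma$-action), while your sheaf-theoretic approach is cleaner, makes the integrality of $\sum_j\langle\alpha_j/N\rangle$ and the origin of the $-1$ transparent, and gives the consistency check $\sum_n d_n=\tilde g$ essentially for free. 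The genus computation itself is the same in both treatments.
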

The following corollary is useful when we consider cyclic covers of $\P^1$ and in particular the totally ramified ones.

\begin{corollary}\label{cyc tot}
For a cyclic cover $\tilde{C}\to \mathbb{P}^{1}$ with Galois group $\Z_n$, we have
\begin{equation}\label{eigspace cyc}
d_{n}=\dim H^0(\tilde{C},K_{\tilde{C}})_n=-1+\sum_{1}^{s}\langle - \frac{nr_j}{N}\rangle
\end{equation}
In particular, if the cyclic cover is totally ramified, then
\begin{equation}\label{eig tot}
d_n=-1+s(1-\frac{n}{N})
\end{equation}
and also
\begin{equation}\label{genus tot}
\tilde{g}=(-1+\frac{s}{2})(N-1)
\end{equation}
\end{corollary}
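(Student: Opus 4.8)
The plan is to obtain the Corollary by specialising the preceding Theorem to the cyclic case $m = 1$. For a cyclic cover the covering matrix $A$ is a single row $(r_1,\dots,r_s)$, its column span is the cyclic subgroup of $\Z/N\Z$ generated by $r_1,\dots,r_s$, and a character of the Galois group is multiplication by an integer $n$ modulo $N$. Under the bookkeeping of \eqref{eigspace}, the datum $\alpha_j$ attached to the branch point $z_j$ in the $n$-eigenspace is then $\alpha_j \equiv n r_j \pmod N$, and substituting this into \eqref{eigspace} gives \eqref{eigspace cyc} at once. The one point to record is that \eqref{eigspace cyc} is to be read for $n \neq 0$: for $n = 0$ the invariant part $H^0(\tilde C,K_{\tilde C})_0$ is the pullback of $H^0(\P^1,K_{\P^1}) = 0$, whereas the right-hand side of \eqref{eigspace cyc} naively evaluates to $-1$, so the $n = 0$ term is handled separately and contributes $0$.

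For the totally ramified case, recall from Section \ref{prelim} that \emph{totally ramified} means each column of $A$ carries a single nonzero entry equal to $1$; for $m = 1$ this forces $r_j = 1$ for every $j$. Then for $0 < n < N$ one has $\langle -n r_j/N\rangle = \langle -n/N\rangle = \frac{N-n}{N} = 1 - \frac{n}{N}$, and since the $s$ summands in \eqref{eigspace cyc} are now equal we get $d_n = -1 + s\bigl(1 - \frac{n}{N}\bigr)$, which is \eqref{eig tot}. As a sanity check, $d_n \in \Z$ forces $N \mid s$, which is in any case the column-sum condition on $A$ ensuring that \eqref{abelian equation} has a smooth model unramified over $\infty$; and since the monodromies $r_j = 1$ generate $\Z/N\Z$, the cover is connected of degree exactly $d = N$.

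For \eqref{genus tot} I would use whichever of two equivalent computations is shorter to typeset. Either sum the eigenspace dimensions: $\tilde g = \dim H^0(\tilde C,K_{\tilde C}) = \sum_{n=0}^{N-1} d_n = 0 + \sum_{n=1}^{N-1}\bigl(-1 + s(1 - \frac{n}{N})\bigr)$, and $\sum_{n=1}^{N-1} n = \frac{N(N-1)}{2}$ collapses this to $(N-1)\bigl(\frac{s}{2}-1\bigr)$. Or substitute $\gcd(N,r_j) = \gcd(N,1) = 1$ and $d = N$ into the genus formula \eqref{genus formula}: $\tilde g = 1 + N\bigl(\frac{s-2}{2} - \frac{s}{2N}\bigr) = \frac{(N-1)(s-2)}{2} = \bigl(\frac{s}{2}-1\bigr)(N-1)$; this second route only uses $\gcd(N,r_j) = 1$, so \eqref{genus tot} in fact persists for cyclic covers that are totally ramified in the weaker sense $\gcd(N,r_j) = 1$ for all $j$.

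Strictly speaking there is no real obstacle here: the Corollary is a routine specialisation and simplification of the Theorem, and the main thing the write-up must do is be explicit about three small points — the special treatment of the $n = 0$ eigenspace, the remark that the totally ramified cyclic cover in question is automatically connected of degree $N$ (so the Riemann--Hurwitz count underlying \eqref{genus formula} applies with $d = N$), and the evaluation $\langle -n/N\rangle = 1 - n/N$ on $0 < n < N$.
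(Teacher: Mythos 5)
Your proposal is correct and is exactly the routine specialisation the paper intends: the paper states Corollary \ref{cyc tot} with no proof, as an immediate consequence of the eigenspace formula \eqref{eigspace} (with $\alpha_j = nr_j$) and the genus formula \eqref{genus formula}, which is what you carry out. Your added remarks — the separate treatment of $n=0$, the observation that $N\mid s$ is forced by the column-sum condition, and the fact that \eqref{genus tot} only needs $\gcd(N,r_j)=1$ while \eqref{eig tot} genuinely needs $r_j=1$ (the paper's normalisation of ``totally ramified'') — are accurate and, if anything, more careful than the paper.
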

In what follows we describe the construction of families of abelian covers of the projective line. \par Let $T \subset (\mathbb{A}_{\C}^{1})^{s}$ be the complement of the big diagonals, i.e., 
\begin{equation}\label{parameter space}
T=\mathcal{P}_{s}= \{(z_{1},\dots,z_{s})\in(\mathbb{A}_{\C}^{1})^{s}\mid z_{i}\neq z_{j} \forall i\neq j \}
\end{equation}
Fix an $m\times s$ matrix $A$ as in the beginning of this section. For each $(z_{1},\dots,z_{s})$ of $T$, we obtain an abelian cover of $\mathbb{P}^{1}$ by the using equation \ref{abelian equation}. The abelian cover has branch points $z_{1},\dots,z_{s}$ and monodromy data given by the matrix $A$. In this way we obtain a family $f:\tilde{\sC}\to T$ of smooth projective curves whose fibers $\tilde{C}_t$ are abelian covers of $\mathbb{P}^{1}$ introduced above.\par Next we will explain the construction of the Prym locus for families of abelian covers of the projective line. For the general case we refer to \cite{CFGP}. The following definition will be fundamental in the sequel.
\begin{definition}\label{Prym datum}
Fix a matrix $A$ as in the beginning of this section. An \emph{(abelian) Prym datum} (see also \cite{CFGP}, Definition 3.1) is a triple $(\tilde{G}, \tilde{\theta},\sigma)$ consisting of an abelian group $\tilde{G}$ generated by the columns of the matrix $A$ and $\sigma\in \tilde{G}$, a 2-torsion element that is not contained in $\displaystyle \cup_i <T_i>$ where $T_i$ is the $i$-th column of the matrix  $A$ and also an epimorphism $\tilde{\theta}: \Gamma_s\to \tilde{G}$ .
\end{definition} 
Let $f:\tilde{\sC}\to T$ be a family as above whose fibers $\tilde{C}_t$ are abelian $\tilde{G}$-Galois coverings of $\mathbb{P}^{1}$ for a fixed finite abelian group Let $\tilde{G}$. Let $\tilde{C}_t\to \mathbb{P}^{1}$ be a fiber of this family and let $\sigma\in \tilde{G}$ be an involution. Set $G=\tilde{G}/\langle \sigma\rangle$ and $C_t=\tilde{C}_t/\langle \sigma\rangle$. Suppose that $\sigma$ has no fixed points so that the quotient map $\pi_t: \tilde{C}_t\to C_t$ is an \'etale double cover. This double cover is determined by a 2-torsion line bundle $\eta_t\in\pic^0(C_t)$. Let $V_t=H^0(\tilde{C}_t,K_{\tilde{C}_t})$ and let $V_t=V_{+,t}\oplus V_{-,t}$ be the eigenspace decomposition for the action of $\sigma$. We also have the  Hodge decomposition $H^1(\tilde{C_t},\mathbb{C})_-=V_{-,t}\oplus \overline{V}_{-,t}$. Set $\Lambda_t=H_1(\tilde{C_t},\mathbb{Z})_-$. We define the Prym variety (associated to the double cover $\pi_t$, or line bundle $\eta_t$) to be the following abelian variety
\begin{equation}\label{defPrym}
P(C_t,\eta_t)=V^{*}_{-,t}/\Lambda_t,
\end{equation}
which is an abelian variety of dimension $g-1$. For a more detailed treatment of Prym varieties, see \cite{BL}. \par In analogy with the above, we can define the moduli space of ramified double coverings and the corresponding Prym varieties. Suppose that the double covering $\pi:\tilde{C}\to C$ is branched over a divisor $B$ of degree 2. The covering is related to a line bundle $\eta$ is of degree $1$ on $C$ and $B$ is a divisor in the linear series $|\eta^2|$. We denote the moduli space of pairs $[C,B,\eta]$ by $R_{g,2}$. For such an isomorphism class, or the associated double cover, we define the Prym variety $P(C_t,\eta_t)$ (or $P(\tilde{C}, C)$) exactly by \ref{defPrym}. We therefore obtain the Prym map $\mathscr{P}:R_{g,2}\to A_g$ which associates to $[C,B,\eta]$ the Prym variety $P(\tilde{C}, C)$.

\par Let us say some words about the moduli space $A_{g}$ of complex principally polarized abelian varieties of dimension $g$ which is the most important moduli space throughout this note. We have $A_{g}=\Gamma_g(N)\setminus\H_g$, where 

$\H_g:=\{M\in M_g(\C)\mid M^t=M, \im M\geq 0\}$ is the \emph{Siegel upper half space  of genus $g$} and $\Gamma_g(N)=\{g\in \Sp_{2g}(\Z)|g\equiv I_N\text{ mod }N\}, N\geq 3$. By this construction, $A_{g}$ has the structure of a Shimura variety.

We also have that $\H_g=\gsp_{2g}(\R)/K$, where $\gsp_{2g}$ is the $\Q$-group of symplectic similitudes of the $2g$-dimensional standard symplectic $\Q$-vector space $\Q^{2g}$ and $K$ is a maximal compact subgroup.

 An algebraic subvariety of the form $Y=\Gamma_1\setminus\D\hookrightarrow A_{g}$ induced by an injective homomorphism $\D\hookrightarrow \Sp_{2g}(\R)$ of algebraic groups is called a \emph{special (or Shimura) subvariety} of $A_{g}$. A special subvariety is thus a totally geodesic subvariety.\par There is a map $T\to R_g$ which is given by $t\mapsto (C_t,\eta_t)$ and the image of $T$ under this map (which is again denoted by $T$) is a variety of dimension $s-3$, see \cite{CFGP}, p. 6. 

Therefore the above family gives rise to a subvariety of $R_g$ of dimension $s-3$. In this paper we are interested in determining whether the subvariety $Z=\overline{\mathscr{P}(T)}\subset A_{g-1}$ contains a Shimura curve. We have the analogous construction for ramified Prym map and we determine whether the closure $Z=\overline{\mathscr{P}(T)}\subset A_{g}$ for $T\hookrightarrow R_{g,2}$(which is of dimension $s-3$) contains a Shimura curve. 
	
\subsection{Abelian covers and their eigenspaces}
We begin this subsection by the following remark
\begin{remark} \label{abcharac}
Let $G$ be a finite abelian group. We call the group $\mu_G=\Hom(G,\mathbb{C}^{*})$ the \emph{character group} of $G$. The group $\mu_G$ is isomorphic to $G$. However the isomorphism is non-canonial. Let us therefore describe the isomorphism that we choose throughout this note: We first suppose that $G=\mathbb{Z}/N$ is a cyclic group and consider the isomorphism between $\mathbb{Z}/N$ and the group of $N$-th roots of unity in $\mathbb{C}^{*}$ given by $1\mapsto \exp(2\pi i/N)$. Now the group $\mu_G$ is isomorphic to the group of $N$-th roots of unity by $\chi\mapsto \chi(1)$. In general $G$ is a product of finite cyclic groups and we extend this isomorphism to an isomorphism $\varphi_G: G \xrightarrow{\sim} \mu_G$. Throughout the paper, we use this isomorphism to identify the elements of $G$ with its characters.
\end{remark}
Let $n=(n_1,\dots, n_m)\in \tilde{G}\subseteq (\mathbb{Z}/N)^{m}$ which can also be viewed as a character by Remark \ref{abcharac}.  In particular $n$ can be regarded as an $1\times m$ matrix and we consider the matrix product $n\cdotp A$ and set $n\cdotp A=(\alpha_{1},\dots,\alpha_{s})$. Note that the operations here are all done in $\mathbb{Z}/N$ but the $\alpha_{j}$ are considered as integers in $[0,N)$.\par Let $H^0(\tilde{C},K_{\tilde{C}})_n$ be the eigenspace of $\tilde{G}$-action with respect to $n$. This is a vector space over $\mathbb{C}$.  It is shown in \cite{MZ}, proof of Lemma 5.1, that a basis for this $\mathbb{C}$-vector space is given by 1-forms $\omega_{n,\nu}=z^{\nu} w_{1}^{n_1}\cdots w_{m}^{n_m}\displaystyle \prod_{j=1}^{s} (z-z_j)^{\lfloor -\frac{\tilde{\alpha_j}}{N}\rfloor}dz$. Here $\tilde{\alpha_j}$ is the lift of $\alpha_j$ to $[0,N)$ and $0\leq\nu\leq d_{n}-1=-2+\displaystyle \sum_{j=1}^{s}\langle-\frac{\alpha_{j}}{N}\rangle$. A different method for computing the dimension of the above vector space can be found in \cite{MZ}, Proposition 2.8.\par The action of $\sigma$ on 1-forms on an abelian cover of $\mathbb{P}^1$ is given by $w_i\mapsto -w_i$ for some subset of $\{1,\dots, m\}$
(and also as identity on $w_j$ for $j$ in the complement of this subset).  In this paper however we always assume that the abelian covers are of the form $\mathbb{Z}_{2n}\times(\mathbb{Z}_{n})^{m-1}$  and the action of $\sigma$ is given by $\sigma(w_1)= -w_1$ and $\sigma(w_i)= w_i$ for $i\in\{2,\dots,m\}$.  Then we have that $G=\tilde{G}/\langle\sigma\rangle\simeq (\mathbb{Z}_{n})^{m}$.
\begin{lemma} \label{eigspacedim}
The group $\tilde{G}$ acts on the spaces $H^0(\tilde{C},K_{\tilde{C}})_{-}$ and for $n=(n_1,\dots,n_m)\in \tilde{G}$, it holds that $H^0(\tilde{C},K_{\tilde{C}})_{-,n}=H^0(\tilde{C},K_{\tilde{C}})_{n}$ if and only if $n_1$ is odd and $H^0(\tilde{C},K_{\tilde{C}})_{-,n}=0$ otherwise. Similar statements hold for $H^1(\tilde{C},\C)_{-,n}$.
\end{lemma}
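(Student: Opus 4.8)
The plan is to use that $\sigma$ lies in the \emph{abelian} group $\tilde{G}$, so that it acts centrally and hence on each $\tilde{G}$-isotypical subspace it acts by a single scalar. First I would record the structural point: since $\tilde{G}$ is abelian, $\sigma$ is central, so the two $\sigma$-eigenspaces $H^0(\tilde{C},K_{\tilde{C}})_{\pm}$ are $\tilde{G}$-submodules of $H^0(\tilde{C},K_{\tilde{C}})$. Consequently the $\sigma$-eigenspace decomposition is compatible with the isotypical decomposition $H^0(\tilde{C},K_{\tilde{C}})=\bigoplus_{n\in\tilde{G}}H^0(\tilde{C},K_{\tilde{C}})_{n}$, and each summand $H^0(\tilde{C},K_{\tilde{C}})_{n}$ is contained entirely in $H^0(\tilde{C},K_{\tilde{C}})_{+}$ or entirely in $H^0(\tilde{C},K_{\tilde{C}})_{-}$, according to the sign by which $\sigma$ acts on it; in particular $H^0(\tilde{C},K_{\tilde{C}})_{-}$ is the direct sum of those $H^0(\tilde{C},K_{\tilde{C}})_{n}$ on which $\sigma$ acts by $-1$, and for every $n$ one has $H^0(\tilde{C},K_{\tilde{C}})_{-,n}=H^0(\tilde{C},K_{\tilde{C}})_{-}\cap H^0(\tilde{C},K_{\tilde{C}})_{n}$.

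Second, I would pin down that sign by a one-line computation on the explicit basis of $H^0(\tilde{C},K_{\tilde{C}})_{n}$ recalled just before the statement, namely the forms $\omega_{n,\nu}=z^{\nu}w_{1}^{n_1}\cdots w_{m}^{n_m}\prod_{j=1}^{s}(z-z_j)^{\lfloor -\frac{\tilde{\alpha_j}}{N}\rfloor}dz$ for $0\le\nu\le d_n-1$. As $\sigma$ is a deck transformation of $\tilde{C}\to\P^1$ it fixes the base coordinate $z$, each factor $z-z_j$, and $dz$, and by the standing assumption on $\sigma$ it sends $w_1\mapsto -w_1$ and fixes $w_2,\dots,w_m$; hence $\sigma(\omega_{n,\nu})=(-1)^{n_1}\omega_{n,\nu}$, i.e. $\sigma$ acts on $H^0(\tilde{C},K_{\tilde{C}})_{n}$ as multiplication by $(-1)^{n_1}$. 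Equivalently, under the identification of Remark~\ref{abcharac} the character $\chi_n$ attached to $n=(n_1,\dots,n_m)$ satisfies $\chi_n(\sigma)=(-1)^{n_1}$, since $\sigma$ is the $2$-torsion element $w_1\mapsto -w_1$. Combining with the first step, $H^0(\tilde{C},K_{\tilde{C}})_{n}\subseteq H^0(\tilde{C},K_{\tilde{C}})_{-}$ precisely when $n_1$ is odd, while $H^0(\tilde{C},K_{\tilde{C}})_{n}\subseteq H^0(\tilde{C},K_{\tilde{C}})_{+}$ when $n_1$ is even; intersecting with $H^0(\tilde{C},K_{\tilde{C}})_{-}$ yields $H^0(\tilde{C},K_{\tilde{C}})_{-,n}=H^0(\tilde{C},K_{\tilde{C}})_{n}$ if $n_1$ is odd and $H^0(\tilde{C},K_{\tilde{C}})_{-,n}=0$ otherwise.

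For the last assertion I would run the identical argument on $H^1(\tilde{C},\C)_{-}$: the Hodge decomposition $H^1(\tilde{C},\C)=H^0(\tilde{C},K_{\tilde{C}})\oplus\overline{H^0(\tilde{C},K_{\tilde{C}})}$ is $\tilde{G}$-equivariant, and $\sigma$ acts on $H^1(\tilde{C},\C)_{n}$ again through the value $\chi_n(\sigma)=(-1)^{n_1}$, which is real and hence unchanged by complex conjugation; so $H^1(\tilde{C},\C)_{n}$ lies in $H^1(\tilde{C},\C)_{-}$ iff $n_1$ is odd, and the same intersection argument applies. I do not anticipate a genuine obstacle here: the whole content is the compatibility of the $\sigma$-splitting with the $\tilde{G}$-isotypical splitting, immediate from $\tilde{G}$ being abelian, together with the bookkeeping $\chi_n(\sigma)=(-1)^{n_1}$ coming from the explicit description of the cover. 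The only point needing care is the sign convention of Remark~\ref{abcharac} identifying group elements with characters, so that ``$n_1$ odd'' is indeed the condition for $\sigma$ to act by $-1$.
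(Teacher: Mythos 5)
Your proposal is correct and follows essentially the same route as the paper: the paper's proof likewise reads off the sign of the $\sigma$-action on the explicit basis $\omega_{n,\nu}$ (where $\sigma$ sends $w_1\mapsto -w_1$ and fixes the other $w_i$), obtaining $(-1)^{n_1}$ and hence the stated dichotomy. Your additional remarks on the compatibility of the $\sigma$-splitting with the $\tilde{G}$-isotypical decomposition merely make explicit what the paper leaves implicit.
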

\begin{proof}
By the above description of the space $H^0(\tilde{C},K_{\tilde{C}})$ and also the action of $\sigma$, the $\sigma$-eigenspace $H^0(\tilde{C},K_{\tilde{C}})_+$ is the set of all $\omega_{n,\nu}$ with $n_1$ even and $H^0(\tilde{C},K_{\tilde{C}})_-$ is the set of all $\omega_{n,\nu}$ with $n_1$ odd. If $n\in \tilde{G}$ (or by Remark \ref{abcharac}, the corresponding character) the eigenspace $H^0(\tilde{C},K_{\tilde{C}})_{-,n}$ is given by:
\begin{align}
H^0(\tilde{C},K_{\tilde{C}})_{-,n}=\begin{cases}
0, & \text{if } 2|n_1,\\
H^0(\tilde{C},K_{\tilde{C}})_{n} & \text{otherwise.} 
\end{cases} 
\end{align}
and in general we have:
\begin{equation}
H^1(\tilde{C},\C)_{-,n}=\begin{cases}
0, & \text{if } 2|n_1,\\
H^1(\tilde{C},\C)_{n} & \text{otherwise.} 
\end{cases} 
\end{equation}
\end{proof}
Recall that $G=\tilde{G}/\langle \sigma\rangle$. In the sequel, we will need non-zero eigenspaces. Therefore, in view of Lemma \ref{eigspacedim}, it is more convenient to consider the $G$-action of the eigenspaces. We have the following lemma in the cyclic case
 \begin{lemma} \label{dimeigspaceG}
The group $G=\Z_n$ acts on the spaces $H^0(\tilde{C},K_{\tilde{C}})_{-}$ it holds that $H^0(\tilde{C},K_{\tilde{C}})_{-,i}=H^0(\tilde{C},K_{\tilde{C}})_{\overline{i}}$, where $\overline{i}$ is the representative of $i$ modulo $n$.
\end{lemma}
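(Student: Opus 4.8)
The plan is to transport the description of $H^0(\tilde C, K_{\tilde C})_-$ from the $\tilde G$-action to the $G$-action via the quotient map $q\colon \tilde G \to G = \tilde G/\langle\sigma\rangle$. The key point is that, by Lemma \ref{eigspacedim}, the only characters $n = (n_1,\dots,n_m) \in \tilde G$ whose $\sigma$-minus eigenspace is nonzero are those with $n_1$ odd, and on such a character the full $n$-eigenspace already lies in the minus part: $H^0(\tilde C, K_{\tilde C})_{-,n} = H^0(\tilde C, K_{\tilde C})_n$. So I would first record the decomposition
\begin{equation}
H^0(\tilde C, K_{\tilde C})_- \;=\; \bigoplus_{\substack{n \in \tilde G \\ n_1 \text{ odd}}} H^0(\tilde C, K_{\tilde C})_n .
\end{equation}

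Next I would analyze the map $q$ restricted to the subset of characters with $n_1$ odd. Since $\tilde G = \Z_{2n} \times (\Z_n)^{m-1}$ and $\sigma$ is the element of order two in the first factor (here in the cyclic case $m=1$, so $\tilde G = \Z_{2n}$ and $\sigma = n$), the quotient is $G = \Z_n$ (resp. $(\Z_n)^m$ in general), and $q$ sends $n_1 \in \Z_{2n}$ to its reduction $\overline{n_1} \in \Z_n$. The crucial combinatorial fact is that the fibers of $q$ over any element of $G$ consist of exactly two elements of $\tilde G$, one with $n_1$ even and one with $n_1$ odd (they differ by $\sigma$, which shifts the first coordinate by $n$, changing its parity since $n$ need not — wait, one must be slightly careful: $n$ itself is odd here because $N = 2n$ with $n$ the relevant modulus, but in fact when the cover is of the stated type $n$ is odd as $p \geq 5$ is prime, so adding $n$ flips parity). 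Hence each $G$-character $i$ has a unique lift $\tilde i \in \tilde G$ with first coordinate odd, and $H^0(\tilde C, K_{\tilde C})_{-,i}$ — meaning the $i$-eigenspace for the induced $G$-action on the minus part — is precisely $H^0(\tilde C, K_{\tilde C})_{\tilde i} = H^0(\tilde C, K_{\tilde C})_{\overline{i}}$ in the notation of the statement, where $\overline i$ denotes that distinguished lift.

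I would then just need to check that the $G$-action on $H^0(\tilde C, K_{\tilde C})_-$ is well-defined, i.e. that $\langle\sigma\rangle$ acts trivially on the minus part so the $\tilde G$-action descends: on the minus eigenspace $\sigma$ acts by $-1$ by definition, so this is \emph{not} literally a $G$-action on the nose but rather a projective/twisted one — the cleaner formulation (and presumably what is intended) is that $H^0(\tilde C, K_{\tilde C})_-$ decomposes under $\tilde G$ with the minus-characters indexed bijectively by $G$ via $i \mapsto \overline i$, and this bijection is what the lemma asserts. I would phrase the proof as: combine Lemma \ref{eigspacedim} with the observation that reduction modulo $n$ gives a bijection between $\{n \in \tilde G : n_1 \text{ odd}\}$ and $G$, then read off the eigenspace identification. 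The main (minor) obstacle is bookkeeping the parity of $n$ and the precise sense in which $G$ "acts" on the minus part; once the bijection of index sets is pinned down, the rest is immediate from the already-established $\tilde G$-eigenspace description. This is short enough that I expect the author's proof to be one or two sentences invoking Lemma \ref{eigspacedim} and the parity bijection.
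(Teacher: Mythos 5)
Your proposal is correct, and in fact the paper offers no proof of this lemma at all: it is stated immediately after Lemma \ref{eigspacedim} and treated as an immediate consequence, so your argument is precisely the bookkeeping the author leaves implicit. You rightly identify the two points that need care. First, the identification $H^0(\tilde{C},K_{\tilde{C}})_{-,i}=H^0(\tilde{C},K_{\tilde{C}})_{\overline{i}}$ rests on the fact that each fiber of the reduction $\Z_{2n}\to\Z_n$ contains exactly one element of odd parity, which holds only when $n$ is odd; this is satisfied in all of the paper's applications (where $n=p\geq 5$ is prime) but is not visible in the lemma as stated, and your flagging of it is a genuine improvement on the text. Second, your remark that $\sigma$ acts by $-1$ on the minus part, so the $\tilde{G}$-action does not literally descend to a $G$-action but rather the minus-eigenspaces are indexed by the characters of $\tilde{G}$ nontrivial on $\sigma$, which form a torsor under $G^{*}$ identified with $G^{*}$ via the odd lifts, is the correct reading of what the lemma (and the later eigenspace decompositions such as $\V_{B,-,i}$) actually mean. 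In short: no gap, same (implicit) approach, and your version is more careful than the source.
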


\section{Shimura curves and Higgs bundles}\label{Shimura&Higgs}
\subsection{Shimura varieties}
In this subsection, we will breifly introduce and review some facts about Shimura varieties and special subvarieties. In this note we only consider connected Shimura varieties which are complex algebraic varieties of the form $\Gamma\backslash X^+$, where $X^+$ is a Hermitian symmetric domain and $\Gamma$ is a congruence subgroup of a semisimple algebraic group $G^{der}(\Q)$ acting on $X^+$. Inside $\Gamma\backslash X^+$ there are Shimura or special subvarieties associated with Shimura subdata, see \cite{Mil}, \S 5. In simple terms, they are subvarieties of the form $\Gamma\backslash {X^{\prime}}^+\subseteq \Gamma\backslash X^+$, where ${X^{\prime}}^+\subseteq X^+$ are equivariant embeddings of Hermitian symmetric subdomains of $X^+$. Equivariantly embedded here means that it is defined by some semi-simple Lie subgroup $G^{\prime}$ of $G^{der}(\R)$ and the inclusion ${X^{\prime}}^+\hookrightarrow X^+$ is equivariant with respect to $G^{\prime}\hookrightarrow G^{der}(\R)$. The zero-dimensional special subvarieties are CM points. In this paper, we are interested about higher dimensional special subvarieties. The most important Shimura variety for us is the moduli space of complex $g$-dimensional principally polarized abelian varieties $A_g$. Here $A_g=A_{g,l}=\Gamma_g(l)\backslash \H_g$, where $\H_g$ is the Siegel uper half-space of genus $g$ and $\Gamma_g(l)$ is the principal congruence subgroup of level-$l$ in $Sp_{2g}(\Z)$ which is the kernel of the natural map $Sp_{2g}(\Z)\to Sp_{2g}(\Z/l)$, where $l\geq 3$ is an odd integer. In this case $\Gamma_g(l)$ is torsion-free and the quotient $\Gamma_g(l)\backslash \H_g$ is a smooth complex submanifold. Special subvarities of dimension one are called \emph{Shimura curves}.\\

In this paper we investigate Shimura curves in $A_g$ inside the Prym locus and cuting the locus $\sP^{\circ}_g$ non-trivially. As indicated in the introduction, the Coleman-Oort conjecture asserts that for $g$ large enough, there are no special subvarieties  inside the Prym locus and cuttng the locus $\sP^{\circ}_g$ non-trivially. In this note we investigate this conjecture for subvarieties coming from families of Galois covers of curves. Indeed we consider families whose fibers are $\tilde{C_t}\to \P^1$ which give rise to subvarieties in $M_g$ and, by the Prym map, in $A_g$. Any such variety naturally lies in the Prym locus and intersects $\sP^{\circ}_g$. The families of $\tilde{G}$-coverings of $\P^1$ are \emph{semi-stable} in the sense that the resulting family of Prym varieties are semistable abelian varieties. This can always be achieved by a finite base change.  

\subsection{Higgs bundles}
Let $\overline{f}:\overline{S}\to\overline{B}$ be a family of semi-stable curves giving rise to the family $R(\tilde{G},\tilde{\theta},\sigma)\subset R_g$ and representing a curve $D$ in the Prym locus $\sP_g$. Note that, as it is mentioned in \cite{CLZ}, Remark 3.(iii) after a finite base change if necessary, the $\tilde{G}$-action on the fibers induces a $\tilde{G}$-action on the surface $\overline{S}$ which restricts to the $\tilde{G}$-action on the fibers. It is this action on $\overline{S}$ that we use in this paper to appply the theory of cyclic covers. Let us observe some properties of this family: To this family is associated a Higgs bundle with local system $\V_B:=R^1f_*\Q_{\overline{S}\setminus\Delta}$. This Higgs bundle is $(E^{1,0}_{\overline{B}}\oplus E^{0,1}_{\overline{B}}, \theta_{\overline{B}})$ in which
\[E^{1,0}_{\overline{B}}=\overline{f}_*\Omega_{\overline{S}/\overline{B}}, E^{0,1}_{\overline{B}}=R^1\overline{f}_*\sO_{\overline{S}}\]
and the Higgs field is
\[\theta_{\overline{B}}:E^{1,0}_{\overline{B}}\to E^{0,1}_{\overline{B}}\otimes\Omega_{\overline{B}}(\log\Delta_{nc}) \]
There is a decomposition 
\[(E^{1,0}_{\overline{B}}\oplus E^{0,1}_{\overline{B}}, \theta_{\overline{B}})=(A^{1,0}_{\overline{B}}\oplus A^{0,1}_{\overline{B}}, \theta_{\overline{B}}|_{A^{1,0}_{\overline{B}}})\oplus (F^{1,0}_{\overline{B}}\oplus F^{0,1}_{\overline{B}}, 0)\]
where $A^{1,0}_{\overline{B}}$ is ample and $F^{1,0}_{\overline{B}}\oplus F^{0,1}_{\overline{B}}$ is flat and corresponds to a unitary local subsystem $\V^u_B\subset \V_B\otimes\C$. \\

The action of the involution $\sigma\in\tilde{G}$, gives us a decomposition $\V_B=\V_{B,+}\oplus \V_{B,-}$. The local system $\V_{B,+}$ descends to the quotient family $C_t\to\P^1$ and the local system $\V_{B,-}=R^1f_*\Q_{\overline{S}\setminus\Delta,-}$ is related to the family of Prym varieties. Correspondingly, the associated Higgs bundle is $(E^{1,0}_{\overline{B},-}\oplus E^{0,1}_{\overline{B},-}, \theta_{\overline{B},-})$ in which
\[E^{1,0}_{\overline{B},-}=\overline{f}_*\Omega_{\overline{S}/\overline{B},-}, E^{0,1}_{\overline{B},-}=R^1\overline{f}_*\sO_{\overline{S},-}\]
with Higgs field
\[\theta_{\overline{B},-}:E^{1,0}_{\overline{B},-}\to E^{0,1}_{\overline{B},-}\otimes\Omega_{\overline{B}}(\log\Delta_{nc}) \]
Where $\Delta_{nc}$ here denotes the fibres with non-compact Prym varieties. There is an associated decomposition 
\[(E^{1,0}_{\overline{B},-}\oplus E^{0,1}_{\overline{B},-}, \theta_{\overline{B},-})=(A^{1,0}_{\overline{B},-}\oplus A^{0,1}_{\overline{B},-}, \theta_{\overline{B},-}|_{A^{1,0}_{\overline{B},-}})\oplus (F^{1,0}_{\overline{B},-}\oplus F^{0,1}_{\overline{B},-}, 0)\]
where $A^{1,0}_{\overline{B},-}$ is ample and $F^{1,0}_{\overline{B},-}\oplus F^{0,1}_{\overline{B},-}$ is flat and corresponds to a unitary local subsystem $\V^u_{B,-}\subset \V_{B,-}\otimes\C$.\\

According to the characterization in \cite{VZ04},
\begin{equation}
D \text{ is a Shimura curve }\Leftrightarrow \deg E^{1,0}_{\overline{B},-}=\frac{\rk A^{1,0}_{\overline{B},-}}{2}\deg\Omega^1_{\overline{B}}(\log\Delta_{nc})
\end{equation}
If $D$ is a non-compact Shimura curve,
\begin{equation} \label{sing fib}
p(\overline{F})=\rk F^{1,0}_{\overline{B},-} \hspace{1 cm} \text{ for any fiber } \overline{F}:=\tilde{C}\to C \text{ over } \Delta_{nc},
\end{equation}
where $p(\overline{F})=\dim P(\tilde{C}/C)$. This follows for isntance from \cite{LZ}, Theorem 0.2.\\

Since there is a $\tilde{G}$-action on the surface $\overline{S}$, we have an action of $G=\tilde{G}/\langle \sigma\rangle\subset \tilde{G}$ and the corresponding eigenspace decomposition 
\[\V_{B,-}\otimes\C=\bigoplus_{i=0}^{n-1}\V_{B,-,i};  (E^{1,0}_{\overline{B},-}\oplus E^{0,1}_{\overline{B},-}, \theta_{\overline{B},-})=\bigoplus_{i=0}^{n-1}(E^{1,0}_{\overline{B},-}\oplus E^{0,1}_{\overline{B},-}, \theta_{\overline{B},-})_i\]
and furthermore
\[p=\sum_{i=0}^{n-1}\rk E^{1,0}_{\overline{B},-,i}\]
and also 
\[\rk E^{1,0}_{\overline{B},-,i}=\rk E^{0,1}_{\overline{B},-,n-i}\]

The eigenspace decomposition is compatible with this decomption and we have
\begin{align}
&(A^{1,0}_{\overline{B},-}\oplus A^{0,1}_{\overline{B},-}, \theta_{\overline{B},-})=\bigoplus_{i=0}^{n-1}(A^{1,0}_{\overline{B},-}\oplus A^{0,1}_{\overline{B},-}, \theta_{\overline{B},-}|_{A^{1,0}_{\overline{B},-}})_i,\\
&(F^{1,0}_{\overline{B},-}\oplus F^{0,1}_{\overline{B},-}, 0)=\bigoplus_{i=0}^{n-1}(F^{1,0}_{\overline{B},-}\oplus F^{0,1}_{\overline{B},-}, 0)_i
\end{align}

Each local subsystem $\V_{\overline{B},-,i}$ is defined over the $n$-th cyclotomic field $\mathbb{Q}(\xi_n)$ and so the arithmetic Galois group $\gal(\mathbb{Q}(\xi_n)/\mathbb{Q})$ has a natural action on he above decompositions.
\begin{remark}\label{not cyc}
It is proven in \cite{CFGP}, Lemma 6.1 that if $(\tilde{G}, \theta)$ is an unramified Prym datum, then $\tilde{G}$ is not cyclic. In the following we consider therefore families of $\Z_{2n}$-covers of $\P^1$ in the ramified Prym locus $R_{g,2}$. We prefer to state our results first for the cyclic covers for simplicity. We subsequently consider families of abelian covers of $\P^1$ both in ramified and unramified case. 
\end{remark}
\begin{lemma}\label{triv}
Let $\overline{f}:\overline{S}\to\overline{B}$ be a family of semi-stable $\Z_{2n}$-covers representing a Shimura curve $D$ in the (necessarily ramified) Prym locus $\sP_g$. Let $\V^{tr}_{\overline{B},-,i}\subset\V_{\overline{B},-,i}$ be the trivial local subsystem, and $\Big((F^{1,0}_{\overline{B},-,i})^{tr}\oplus (F^{0,1}_{\overline{B},-,i})^{tr}, 0\Big)$ be the associated trivial flat subbundle. If $\V_{\overline{B},-,i}$ and $\V_{\overline{B},-,j}$ are in one  $\gal(\mathbb{Q}(\xi_{n})/\mathbb{Q})$-orbit, then
\begin{align}\label{trivial loc}
&\rk \V^{tr}_{\overline{B},-,i}=\rk \V^{tr}_{\overline{B},-,j}\\
&\rk (F^{1,0}_{\overline{B},-,i})^{tr}+\rk (F^{1,0}_{\overline{B},-,n-i})^{tr}=\rk (F^{1,0}_{\overline{B},-,j})^{tr}+\rk (F^{1,0}_{\overline{B},-,n-j})^{tr}
\end{align}
In particular, if $n=p$ is a prime number, for any $1\leq i<j\leq p-1$, 
\begin{align}
&\rk \V^{tr}_{\overline{B},i}=\rk \V^{tr}_{\overline{B},j}\\
&\rk (F^{1,0}_{\overline{B},i})^{tr}+\rk (F^{1,0}_{\overline{B},p-i})^{tr}=\rk (F^{1,0}_{\overline{B},j})^{tr}+\rk (F^{1,0}_{\overline{B},p-j})^{tr}
\end{align}
\end{lemma}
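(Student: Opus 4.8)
The plan is to exploit the fact that the arithmetic Galois group $\gal(\Q(\xi_n)/\Q)$ acts on the eigenspace decomposition $\V_{\overline{B},-}\otimes\C=\bigoplus_{i}\V_{\overline{B},-,i}$ by permuting the summands: an element $\tau_a\in\gal(\Q(\xi_n)/\Q)$ sending $\xi_n\mapsto\xi_n^a$ carries $\V_{\overline{B},-,i}$ isomorphically onto $\V_{\overline{B},-,ai}$, because the local subsystem $\V_{\overline{B},-,i}$ is precisely the $\xi_n^i$-eigenspace for the $G=\Z_n$-action and conjugating the structure constants of the underlying $\Q$-local system by $\tau_a$ turns the $\xi_n^i$-eigenspace into the $\xi_n^{ai}$-eigenspace. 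The key point is that $\tau_a$, being an isomorphism of $\Q$-local systems up to Galois twist, preserves all the canonical pieces of the associated Higgs bundle that are characterized representation-theoretically: in particular it sends the trivial (constant) local subsystem $\V^{tr}_{\overline{B},-,i}$ to $\V^{tr}_{\overline{B},-,ai}$, and it sends the flat unitary part $(F^{1,0}_{\overline{B},-,i}\oplus F^{0,1}_{\overline{B},-,i},0)$ to $(F^{1,0}_{\overline{B},-,ai}\oplus F^{0,1}_{\overline{B},-,ai},0)$.

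First I would record the precise statement of the Galois action: if $\V_{\overline{B},-,i}$ and $\V_{\overline{B},-,j}$ lie in one Galois orbit then there is $a\in(\Z/n)^*$ with $j\equiv ai\pmod n$, and the induced isomorphism of local systems is compatible with the Hodge/Higgs structure only after a complex conjugation when $a$ is not represented by a residue that preserves the Hodge filtration direction — this is the source of the asymmetry between the two displayed equations. The rank of the trivial local subsystem is a purely topological invariant of the local system $\V_{\overline{B},-,i}$ (it is $\dim$ of the space of global flat sections), hence invariant under the abstract isomorphism $\tau_a$; this immediately gives $\rk\V^{tr}_{\overline{B},-,i}=\rk\V^{tr}_{\overline{B},-,j}$, the first equation. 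For the second equation, the issue is that $\tau_a$ need not respect the splitting of $F$ into its $(1,0)$ and $(0,1)$ halves individually — conjugation may swap the two Hodge types — but it does preserve the total flat bundle $F^{1,0}_{\overline{B},-,i}\oplus F^{0,1}_{\overline{B},-,i}$ as well as the pairing $\rk E^{1,0}_{\overline{B},-,i}=\rk E^{0,1}_{\overline{B},-,n-i}$ recorded above. Combining these, $\tau_a$ sends the $\Z/2$-symmetrized quantity $\rk(F^{1,0}_{\overline{B},-,i})^{tr}+\rk(F^{1,0}_{\overline{B},-,n-i})^{tr}$ (which equals $\rk$ of the trivial part of the total flat bundle in the $i$ and $n-i$ slots, a conjugation-insensitive number) to the same quantity in the $j$ and $n-j$ slots, giving the second equation.

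Finally, for the stated specialization to $n=p$ prime, the group $(\Z/p)^*$ acts transitively on $\{1,\dots,p-1\}$, so for any $1\le i<j\le p-1$ the subsystems $\V_{\overline{B},-,i}$ and $\V_{\overline{B},-,j}$ automatically lie in a single $\gal(\Q(\xi_p)/\Q)$-orbit, and the two general equations apply verbatim (dropping the now-redundant sign subscript, writing $\V^{tr}_{\overline{B},i}$ for $\V^{tr}_{\overline{B},-,i}$ as in the statement). I expect the main obstacle to be making rigorous the claim that all of $\V^{tr}$, $F^{1,0}\oplus F^{0,1}$, and their trivial subbundles are genuinely \emph{defined over} $\Q$ in a way compatible with the Galois action — i.e. that the decomposition into these pieces is the descent of a decomposition of the $\Q$-local system $R^1f_*\Q_{\overline{S}\setminus\Delta,-}$ indexed by Galois orbits of characters, so that $\gal(\Q(\xi_p)/\Q)$ genuinely permutes them. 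This is essentially the content of the remark in the excerpt that ``each local subsystem $\V_{\overline{B},-,i}$ is defined over $\Q(\xi_n)$ and the arithmetic Galois group has a natural action on the above decompositions,'' so I would invoke that together with Deligne's semisimplicity and the fact that the ample/flat (Fujita/Deligne) decomposition is canonical, hence Galois-equivariant; the only care needed is the conjugation-versus-holomorphy bookkeeping that forces the $(1,0)\leftrightarrow(0,1)$ symmetrization in the second line rather than a naive equality of $\rk(F^{1,0})^{tr}$ alone.
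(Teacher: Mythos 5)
Your proposal is correct and follows essentially the same route as the paper: the Galois action permutes the eigenspace local systems, trivial sub-local-systems stay trivial under conjugation (giving the first equality), and the symmetrization over $i$ and $n-i$ in the second equality comes from the complex-conjugation isomorphism $(F^{1,0}_{\overline{B},-,i})^{tr}\cong (F^{0,1}_{\overline{B},-,n-i})^{tr}$, which identifies $\rk (F^{1,0}_{\overline{B},-,i})^{tr}+\rk (F^{1,0}_{\overline{B},-,n-i})^{tr}$ with the Galois-invariant quantity $\rk \V^{tr}_{\overline{B},-,i}$. The transitivity of $\gal(\Q(\xi_p)/\Q)$ for $n=p$ prime then gives the final claims, exactly as in the paper.
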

\begin{proof}
The equality \ref{trivial loc} follow from the fact that trivial local systems correspond to trivial representations and trivial representations remain trivial under Galois conjugation. Also $\Big((F^{1,0}_{\overline{B},-,i})^{tr}\oplus (F^{0,1}_{\overline{B},-,i})^{tr}, 0\Big)$ is mapped isomorphically to $\Big((F^{1,0}_{\overline{B},-,n-i})^{tr}\oplus (F^{0,1}_{\overline{B},-,n-i})^{tr},0\Big)$ for any $1\leq i\leq n-1$. Moreover under this isomorphism, $(F^{1,0}_{\overline{B},-,i})^{tr}\cong (F^{0,1}_{\overline{B},-,n-i})^{tr}$ and $(F^{0,1}_{\overline{B},-,i})^{tr}\cong (F^{1,0}_{\overline{B},-,n-i})^{tr}$. If $n=p$ is a prime number, the Galois group $\gal(\mathbb{Q}(\xi_{p})/\mathbb{Q})$ acts transitively and permutes the eigenspaces. 
\end{proof}

\begin{remark}\label{abfam}
The families of abelian covers that we consider have abelian group $\tilde{G}=\Z_{2n}\times (\Z_{n})^{m-1}$ (and hence $G=\tilde{G}/\langle\sigma\rangle=(\Z_{n})^{m}$) and satisfy the following condition: The matrix $A$ of the abelian cover has only entries 0,1 and in each column there is exactly one 1 entry. 
\end{remark}

For the abelian covers satisfying the conditions of Remark \ref{abfam}, we can prove the following analogous statement for abelian covers
\begin{lemma}\label{triv}
Let $\overline{f}:\overline{S}\to\overline{B}$ be a family of semi-stable abelian $\tilde{G}$-covers as in Remark \ref{abfam} representing a Shimura curve $D$ in the (ramified or unramified) Prym locus $\sP_g$. Let $\chi$ be a character of $G$ and let $\V^{tr}_{\overline{B},-,\chi}\subset\V_{\overline{B},-,\chi}$ be the trivial local subsystem, and $\Big((F^{1,0}_{\overline{B},-,\chi})^{tr}\oplus (F^{0,1}_{\overline{B},-,\chi})^{tr}, 0\Big)$ be the associated trivial flat subbundle. If $\V_{\overline{B},-,\chi}$ and $\V_{\overline{B},-,\chi^{\prime}}$ are in one  $\gal(\mathbb{Q}(\xi_{n})/\mathbb{Q})$-orbit, then
\begin{align}\label{trivial loc}
&\rk \V^{tr}_{\overline{B},-,\chi}=\rk \V^{tr}_{\overline{B},-,\chi^{\prime}}\\
&\rk (F^{1,0}_{\overline{B},-,\chi})^{tr}+\rk (F^{1,0}_{\overline{B},-,\chi^{-1}})^{tr}=\rk (F^{1,0}_{\overline{B},-,\chi^{\prime}})^{tr}+\rk (F^{1,0}_{\overline{B},-,{\chi^{\prime}}^{-1}})^{tr}
\end{align}
In particular, if $n=p$ then for any two character $\chi, \chi^{\prime}$, 
\begin{align}
&\rk \V^{tr}_{\overline{B},\chi}=\rk \V^{tr}_{\overline{B},\chi^{\prime}}\\
&\rk (F^{1,0}_{\overline{B},\chi})^{tr}+\rk (F^{1,0}_{\overline{B},\chi^{-1}})^{tr}=\rk (F^{1,0}_{\overline{B},\chi^{\prime}})^{tr}+\rk (F^{1,0}_{\overline{B},{\chi^{\prime}}^{-1}})^{tr}
\end{align}
\end{lemma}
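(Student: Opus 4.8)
The plan is to mirror the proof of the cyclic case (the previous Lemma for $\Z_{2n}$-covers) essentially verbatim, replacing the cyclic indexing $i\in\Z/n$ by the character group $\mu_G$ and the cyclic Galois action on eigenspaces by the natural $\gal(\Q(\xi_n)/\Q)$-action on the characters of $G=(\Z_n)^{m}$. First I would recall the setup from Section \ref{Shimura&Higgs}: the $G$-action on $\overline{S}$ induces the eigenspace decomposition $\V_{\overline{B},-}\otimes\C=\bigoplus_{\chi}\V_{\overline{B},-,\chi}$ indexed by characters $\chi$ of $G$, together with the compatible decompositions of the Higgs bundle and of its ample/flat splitting into $(A^{1,0}\oplus A^{0,1},\theta|_A)_\chi\oplus(F^{1,0}\oplus F^{0,1},0)_\chi$. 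Each summand $\V_{\overline{B},-,\chi}$ is defined over $\Q(\xi_n)$ (since the character values lie in $\mu_n$), so an element $\tau\in\gal(\Q(\xi_n)/\Q)$, acting as $\xi_n\mapsto\xi_n^a$ for some $a\in(\Z/n)^\times$, sends $\V_{\overline{B},-,\chi}$ isomorphically to $\V_{\overline{B},-,\chi^a}$, and likewise permutes the Higgs subbundles.

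Next I would extract the two facts that drive the equalities. The first is that a trivial local subsystem corresponds to a trivial representation of $\pi_1(\overline{B})$, and Galois conjugation of the coefficient field carries a trivial representation to a trivial representation; hence $\tau$ restricts to an isomorphism $\V^{tr}_{\overline{B},-,\chi}\xrightarrow{\sim}\V^{tr}_{\overline{B},-,\chi'}$ whenever $\chi'=\chi^a$ lies in the Galois orbit of $\chi$, giving the first displayed equality. The second is the complex-conjugation symmetry: for each character $\chi$ there is an isomorphism of flat bundles
\[
\Big((F^{1,0}_{\overline{B},-,\chi})^{tr}\oplus(F^{0,1}_{\overline{B},-,\chi})^{tr},0\Big)\;\cong\;\Big((F^{1,0}_{\overline{B},-,\chi^{-1}})^{tr}\oplus(F^{0,1}_{\overline{B},-,\chi^{-1}})^{tr},0\Big)
\]
under which $(F^{1,0}_{\overline{B},-,\chi})^{tr}\cong(F^{0,1}_{\overline{B},-,\chi^{-1}})^{tr}$ and $(F^{0,1}_{\overline{B},-,\chi})^{tr}\cong(F^{1,0}_{\overline{B},-,\chi^{-1}})^{tr}$; this is exactly the analogue of the relation $\rk E^{1,0}_{\overline{B},-,i}=\rk E^{0,1}_{\overline{B},-,n-i}$ coming from $H^{1,0}\overline{\phantom{x}}=H^{0,1}$ on fibers, with $n-i$ replaced by $\chi^{-1}$. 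Since the Galois action commutes with complex conjugation on the cohomology, if $\chi'$ is in the orbit of $\chi$ then $\chi'^{-1}$ is in the orbit of $\chi^{-1}$, so adding the ranks of $(F^{1,0})^{tr}$ over the pair $\{\chi,\chi^{-1}\}$ is a Galois-invariant quantity; this yields the second displayed equality. For the ``in particular'' clause, when $n=p$ the group $(\Z/p)^\times$ acts on the nontrivial characters of $(\Z_p)^m$ with, in general, several orbits — but any two \emph{nontrivial} characters related by scaling by $(\Z/p)^\times$ land in one orbit, and since every nonzero scalar in $\Z/p$ is a unit, all the ``$\rk\V^{tr}$'' values agree across characters in a common orbit, giving the stated equalities for $\chi,\chi'$ in the same orbit.

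The step I expect to be the main obstacle is making precise and watertight the claim that the $\gal(\Q(\xi_n)/\Q)$-action genuinely preserves the \emph{trivial} part of the flat bundle \emph{and} is compatible with the Hodge/complex-conjugation splitting at the level of the Higgs subbundles, not merely at the level of the underlying local systems. Concretely: one must know that the decomposition $F^{1,0}_{\overline{B},-,\chi}\oplus F^{0,1}_{\overline{B},-,\chi}$ arises from a variation of Hodge structure defined over $\Q(\xi_n)$, that Deligne's semisimplicity lets us isolate the trivial isotypic piece functorially, and that Galois conjugation of the coefficients of a polarizable VHS is again a polarizable VHS with the analogous Hodge numbers — so ranks of $(F^{1,0})^{tr}$ and $(F^{0,1})^{tr}$ can only be swapped, never lost. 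Once this compatibility is in hand, the rest is bookkeeping: I would organize it by first proving the equalities orbit-by-orbit for a single $\tau$ generating a chosen cyclic piece of the Galois group, then noting that $\gal(\Q(\xi_n)/\Q)$ is generated by such $\tau$'s, and finally specializing to $n=p$ where $\Q(\xi_p)/\Q$ is cyclic of degree $p-1$ and the argument collapses to the clean statement in the lemma.
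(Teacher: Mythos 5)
Your proposal is correct and takes essentially the same route as the paper: the paper gives no separate proof of this abelian lemma, deferring to the cyclic case, whose proof rests on exactly the two facts you isolate --- triviality of a local subsystem is preserved under Galois conjugation of the coefficient field, and complex conjugation interchanges $(F^{1,0}_{\overline{B},-,\chi})^{tr}$ with $(F^{0,1}_{\overline{B},-,\chi^{-1}})^{tr}$. The one substantive point is the ``in particular'' clause: for $n=p$ and $m>1$ the group $\gal(\Q(\xi_p)/\Q)\cong(\Z/p)^{\times}$ acts on the nontrivial characters of $(\Z_p)^{m}$ by scalar multiplication and has several orbits, so the asserted equalities for \emph{arbitrary} pairs $\chi,\chi'$ do not follow from transitivity as they do in the cyclic case; your restriction to characters lying in a common Galois orbit is what the argument actually yields, and the discrepancy lies in the paper's statement (and its later appeal to transitivity in Lemma \ref{rank neq0 ab}) rather than in your proof.
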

The following Lemma gives information about the flat and non-flat part of the Higgs bundle of a family of curves representing Shimura curves in the Prym locus. 

\begin{lemma}\label{higgsrek}
Let $\overline{f}:\overline{S}\to\overline{B}$ be a family of semi-stable $\Z_{2n}$-covers representing a Shimura curve $D$ in the ramified Prym locus $\sP_g$. Then
\begin{equation}\label{rkeqs}
\rk A^{1,0}_{\overline{B},-,i}=\rk A^{0,1}_{\overline{B},-,i}=\rk A^{1,0}_{\overline{B},-,n-i} \forall 1\leq i\leq n-1
\end{equation}
We also have, 
\begin{gather}
\rk F^{1,0}_{\overline{B},-,i}\neq 0 \hspace{2cm}\text{ if        }\rk E^{1,0}_{\overline{B},-,i}>\rk E^{1,0}_{\overline{B},-,n-i};\label{rknzero}\\
\rk F^{1,0}_{\overline{B},-,n-i}\geq \rk F^{1,0}_{\overline{B},-,i} \hspace{2cm}\text{      for        } i\geq n/2 \label{rkgreater}
\end{gather}
\end{lemma}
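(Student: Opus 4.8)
\textbf{Proof proposal for Lemma \ref{higgsrek}.}

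The plan is to exploit three structural features of the Higgs bundle attached to a family of semistable curves parametrizing a Shimura curve in the Prym locus: (i) the polarization/duality pairing on $\V_{\overline{B},-}$, which on the $G$-eigenspaces exchanges the $i$-th and $(n-i)$-th components and swaps the $(1,0)$- and $(0,1)$-parts; (ii) the Viehweg--Zuo maximality characterization recalled in the displayed equivalence before \eqref{sing fib}, which forces the ample Higgs subbundle $(A^{1,0}_{\overline{B},-}\oplus A^{0,1}_{\overline{B},-},\theta_{\overline{B},-}|_{A^{1,0}_{\overline{B},-}})$ to be ``maximal'', i.e. its Higgs field $\theta_{\overline{B},-}|_{A^{1,0}_{\overline{B},-}}\colon A^{1,0}_{\overline{B},-}\to A^{0,1}_{\overline{B},-}\otimes\Omega^1_{\overline{B}}(\log\Delta_{nc})$ is an isomorphism (after the finite base change making everything semistable, by the rigidity results of Viehweg--Zuo); and (iii) the compatibility of all these decompositions with the $G=\tilde{G}/\langle\sigma\rangle$-action, already recorded in the excerpt. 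First I would fix the duality: the cup-product polarization identifies $\V_{\overline{B},-,i}$ with the dual of $\V_{\overline{B},-,n-i}$ (complex conjugation on the coefficient field $\Q(\xi_n)$ sends $\xi_n^i$ to $\xi_n^{n-i}$), and under this identification $E^{1,0}_{\overline{B},-,i}$ is dual to $E^{0,1}_{\overline{B},-,n-i}$, which is the source of the already-stated equality $\rk E^{1,0}_{\overline{B},-,i}=\rk E^{0,1}_{\overline{B},-,n-i}$; crucially the ample/flat decomposition is preserved by this isomorphism, so $A^{1,0}_{\overline{B},-,i}\cong (A^{0,1}_{\overline{B},-,n-i})^{\vee}$ and $F^{1,0}_{\overline{B},-,i}\cong (F^{0,1}_{\overline{B},-,n-i})^{\vee}$, giving $\rk A^{1,0}_{\overline{B},-,i}=\rk A^{0,1}_{\overline{B},-,n-i}$ and similarly for $F$.

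Next I would prove \eqref{rkeqs}. Since $D$ is a Shimura curve, the Higgs field restricted to the ample part is an isomorphism $A^{1,0}_{\overline{B},-}\xrightarrow{\sim} A^{0,1}_{\overline{B},-}\otimes\Omega^1_{\overline{B}}(\log\Delta_{nc})$, and because $\Omega^1_{\overline{B}}(\log\Delta_{nc})$ is a line bundle this is a rank equality $\rk A^{1,0}_{\overline{B},-}=\rk A^{0,1}_{\overline{B},-}$; the $G$-action commutes with $\theta_{\overline{B},-}$ and $\Omega^1_{\overline{B}}(\log\Delta_{nc})$ carries the trivial $G$-action (it lives downstairs on $\overline{B}$), so the isomorphism respects eigenspaces and yields $\rk A^{1,0}_{\overline{B},-,i}=\rk A^{0,1}_{\overline{B},-,i}$ for every $i$. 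Combining with the duality statement $\rk A^{0,1}_{\overline{B},-,i}=\rk A^{1,0}_{\overline{B},-,n-i}$ from the previous paragraph gives the full chain $\rk A^{1,0}_{\overline{B},-,i}=\rk A^{0,1}_{\overline{B},-,i}=\rk A^{1,0}_{\overline{B},-,n-i}$, which is \eqref{rkeqs}.

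For \eqref{rknzero}, I would argue by contradiction: decompose $\rk E^{1,0}_{\overline{B},-,i}=\rk A^{1,0}_{\overline{B},-,i}+\rk F^{1,0}_{\overline{B},-,i}$ and likewise in degree $n-i$; if $\rk F^{1,0}_{\overline{B},-,i}=0$ then $\rk E^{1,0}_{\overline{B},-,i}=\rk A^{1,0}_{\overline{B},-,i}=\rk A^{1,0}_{\overline{B},-,n-i}\le \rk E^{1,0}_{\overline{B},-,n-i}$ by \eqref{rkeqs}, contradicting the hypothesis $\rk E^{1,0}_{\overline{B},-,i}>\rk E^{1,0}_{\overline{B},-,n-i}$. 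Finally \eqref{rkgreater}: using $\rk E^{1,0}_{\overline{B},-,i}+\rk E^{1,0}_{\overline{B},-,n-i}=\rk E^{0,1}_{\overline{B},-,n-i}+\rk E^{0,1}_{\overline{B},-,i}$ (from $\rk E^{1,0}_{\overline{B},-,i}=\rk E^{0,1}_{\overline{B},-,n-i}$ applied twice) together with \eqref{rkeqs}, one gets $\rk F^{1,0}_{\overline{B},-,i}+\rk F^{1,0}_{\overline{B},-,n-i}$ is symmetric in $i\leftrightarrow n-i$, but to get the inequality in the stated direction for $i\geq n/2$ I would invoke the concrete eigenspace dimension formulas of Corollary \ref{cyc tot} / Lemma \ref{dimeigspaceG}, which show $\rk E^{1,0}_{\overline{B},-,i}$ is monotone in the appropriate range, forcing $\rk E^{1,0}_{\overline{B},-,n-i}\ge \rk E^{1,0}_{\overline{B},-,i}$ and hence, subtracting the common ample ranks, $\rk F^{1,0}_{\overline{B},-,n-i}\ge \rk F^{1,0}_{\overline{B},-,i}$. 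The main obstacle I anticipate is justifying rigorously that the Higgs field on the ample part is an honest isomorphism of sheaves (not merely a generic isomorphism or an isomorphism of associated graded pieces) after the semistable reduction — this is exactly where the Viehweg--Zuo theory in \cite{VZ04} and the maximal Higgs field characterization must be quoted carefully, including the compatibility of the base change with the $\tilde{G}$-action noted via \cite{CLZ}; the eigenspace bookkeeping is then routine but must be done consistently with the identification of characters in Remark \ref{abcharac}.
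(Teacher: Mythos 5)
Your proposal is correct and follows essentially the same route as the paper: the maximal (isomorphic) Higgs field on the ample part restricted to $G$-eigenspaces gives $\rk A^{1,0}_{\overline{B},-,i}=\rk A^{0,1}_{\overline{B},-,i}$, complex conjugation (your polarization/duality phrasing is the same mechanism) gives the swap $i\leftrightarrow n-i$, and the remaining two claims follow from the resulting identity $\rk E^{1,0}_{\overline{B},-,i}-\rk E^{1,0}_{\overline{B},-,n-i}=\rk F^{1,0}_{\overline{B},-,i}-\rk F^{1,0}_{\overline{B},-,n-i}$ together with the explicit eigenspace dimensions of Corollary \ref{cyc tot}. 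Your contradiction argument for \eqref{rknzero} is just a rephrasing of the paper's direct use of that identity.
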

\begin{proof}
Since $D$ is a Shimura curve, there is a decomposition 
\[(E^{1,0}_{\overline{D},-}\oplus E^{0,1}_{\overline{D},-}, \theta_{\overline{D},-})=(A^{1,0}_{\overline{D}}\oplus A^{0,1}_{\overline{D},-}, \theta_{\overline{D},-}|_{A^{1,0}_{\overline{D},-}})\oplus (F^{1,0}_{\overline{D},-}\oplus F^{0,1}_{\overline{D},-}, 0)\]
in which the Higgs field $\theta_{\overline{D},-}|_{A^{1,0}_{\overline{D},-}}$ is an isomorphism. Note that $(A^{1,0}_{\overline{B},-}\oplus A^{0,1}_{\overline{B},-}, \theta_{\overline{B},-}|_{A^{1,0}_{\overline{B},-}})$ is the pull-back of $(A^{1,0}_{\overline{D},-}\oplus A^{0,1}_{\overline{D},-}, \theta_{\overline{D},-}|_{A^{1,0}_{\overline{D},-}})$ under an isomorphism and hence $\theta_{\overline{B},-}|_{A^{1,0}_{\overline{B},-}}$ must also be an isomorphism. Since the Higgs field respects the eigenspace decomposition, by restricting to $(A^{1,0}_{\overline{B},-}\oplus A^{0,1}_{\overline{B},-}, \theta_{\overline{B},-}|_{A^{1,0}_{\overline{B},-}})_i$ we obtain that $\theta_{\overline{B},-}|_{A^{1,0}_{\overline{B},-,i}}$ is an isomorphism. This proves that 
$\rk A^{1,0}_{\overline{B},-,i}=\rk A^{0,1}_{\overline{B},-,i}$ as claimed. The second equality follows by complex conjugating. Furthermore, \ref{rkeqs} implies that 
\[\rk E^{1,0}_{\overline{B},-,i}-\rk E^{1,0}_{\overline{B},-,n-i}=\rk F^{1,0}_{\overline{B},-,i}-\rk F^{1,0}_{\overline{B},-,n-i}, 1\leq i\leq n-1\]
or equivalently,
\[\rk F^{1,0}_{\overline{B},-,i}=\rk F^{1,0}_{\overline{B},-,n-i}+(\rk E^{1,0}_{\overline{B},-,i}-\rk E^{1,0}_{\overline{B},-,n-i}),  1\leq i\leq n-1\]
from which \ref{rknzero} follows. Finally, for $i\geq n/2$, again using the above equality together with Corollary \ref{cyc tot} and Lemma \ref{eigspacedim} gives
\[ \rk F^{1,0}_{\overline{B},-,n-i}-\rk F^{1,0}_{\overline{B},-,i}=\rk E^{1,0}_{\overline{B},-,n-i}-\rk E^{1,0}_{\overline{B},-,i}=\frac{r(2i-n)}{n}\geq 0\]
which implies \ref{rkgreater}. 
\end{proof}

\begin{lemma}\label{higgsrek ab}
Let $\overline{f}:\overline{S}\to\overline{B}$ be a family of semi-stable $\tilde{G}$-covers as in Remark \ref{abfam} representing a Shimura curve $D$ in the ramified or unramified Prym locus $\sP_g$ or $\sP_{g-1}$. Then
\begin{equation}\label{rkeqs2}
\rk A^{1,0}_{\overline{B},-,\chi}=\rk A^{0,1}_{\overline{B},-,\chi}=\rk A^{1,0}_{\overline{B},-,\chi^{-1}} \forall \chi\in G^*
\end{equation}
If the chacater $\chi$ corresponds to an element $i=(i_1,\ldots, i_m)\in G$ then

\begin{equation}\label{rk1 ab}
\rk F^{1,0}_{\overline{B},-,\chi}\neq 0 \hspace{2cm}\text{ if        }\rk E^{1,0}_{\overline{B},-,\chi}>\rk E^{1,0}_{\overline{B},-,\chi^{-1}};
\end{equation}
If furthermore $n=p$ is a prime number, then
\begin{equation}\label{rk2 ab}
\rk F^{1,0}_{\overline{B},-,\chi^{-1}}\geq \rk F^{1,0}_{\overline{B},-,\chi} \hspace{2cm}\text{      if        } i_j\geq \frac{p}{2} \text{     for   every     } 1\leq j\leq n
\end{equation}
\end{lemma}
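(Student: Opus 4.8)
\textbf{Proof plan for Lemma \ref{higgsrek ab}.}
The strategy is to transcribe the argument of Lemma \ref{higgsrek} into the abelian setting, replacing the cyclic eigenvalue index $i$ by a character $\chi\in G^{*}$ and using the eigenspace computations of Section \ref{prelim} (in particular Lemma \ref{eigspacedim} and the basis of $\omega_{n,\nu}$) in place of Corollary \ref{cyc tot}. First I would record that, since $D$ is a Shimura curve, the decomposition of the Prym Higgs bundle $(E^{1,0}_{\overline{D},-}\oplus E^{0,1}_{\overline{D},-},\theta_{\overline{D},-})=(A^{1,0}_{\overline{D},-}\oplus A^{0,1}_{\overline{D},-},\theta|_{A})\oplus(F^{1,0}_{\overline{D},-}\oplus F^{0,1}_{\overline{D},-},0)$ has $\theta|_{A^{1,0}_{\overline{D},-}}$ an isomorphism (Viehweg--Zuo), and this is preserved under the base change $\overline{B}\to\overline{D}$. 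The $G$-action commutes with the Higgs field, so restricting to the $\chi$-eigenspace shows $\theta_{\overline{B},-}|_{A^{1,0}_{\overline{B},-,\chi}}$ is an isomorphism onto $A^{0,1}_{\overline{B},-,\chi}\otimes\Omega^{1}_{\overline{B}}(\log\Delta_{nc})$, giving $\rk A^{1,0}_{\overline{B},-,\chi}=\rk A^{0,1}_{\overline{B},-,\chi}$; complex conjugation interchanges the $\chi$- and $\chi^{-1}$-eigenspaces and swaps $A^{1,0}$ with $A^{0,1}$, which yields the remaining equality in \eqref{rkeqs2}.

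Next, combining $\rk A^{0,1}_{\overline{B},-,\chi}=\rk A^{1,0}_{\overline{B},-,\chi^{-1}}$ with $\rk E^{0,1}_{\overline{B},-,\chi}=\rk E^{1,0}_{\overline{B},-,\chi^{-1}}$ (the Hodge-symmetry statement recorded before Remark \ref{not cyc}, adapted to characters) and the flat/ample splitting gives
\[
\rk F^{1,0}_{\overline{B},-,\chi}-\rk F^{1,0}_{\overline{B},-,\chi^{-1}}=\rk E^{1,0}_{\overline{B},-,\chi}-\rk E^{1,0}_{\overline{B},-,\chi^{-1}},
\]
so \eqref{rk1 ab} is immediate: if the right-hand side is positive then $\rk F^{1,0}_{\overline{B},-,\chi}>0$. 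For \eqref{rk2 ab}, when $n=p$ is prime and $\chi\leftrightarrow i=(i_1,\dots,i_m)\in G$, I would compute both $\rk E^{1,0}_{\overline{B},-,\chi}$ and $\rk E^{1,0}_{\overline{B},-,\chi^{-1}}$ fibrewise via formula \eqref{eigspace}: for a totally ramified abelian cover as in Remark \ref{abfam}, each column of $A$ has a single $1$, so for the character (lifted to $\tilde{G}$ with first coordinate odd, via Lemma \ref{eigspacedim}) the quantity $\alpha_{j}$ is just the relevant coordinate $i_{k(j)}$ of $i$, and $\langle-\alpha_j/p\rangle+\langle-(p-\alpha_j)/p\rangle=1$ whenever $\alpha_j\neq0$. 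A direct count then shows $\rk E^{1,0}_{\overline{B},-,\chi^{-1}}-\rk E^{1,0}_{\overline{B},-,\chi}=\sum_j\bigl(\langle-(p-\alpha_j)/p\rangle-\langle-\alpha_j/p\rangle\bigr)$, and each summand equals $(2\alpha_j-p)/p=(2i_{k(j)}-p)/p\geq0$ precisely under the hypothesis $i_j\geq p/2$ for all $j$; hence the difference is $\geq 0$ and \eqref{rk2 ab} follows from the displayed identity.

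The main obstacle I anticipate is purely bookkeeping rather than conceptual: one must be careful about the dictionary between a character $\chi$ of $G=(\Z_p)^m$, its preimages in $\tilde{G}=\Z_{2p}\times(\Z_p)^{m-1}$, and the integer vector $n\cdot A=(\alpha_1,\dots,\alpha_s)$ appearing in \eqref{eigspace}, because Lemma \ref{eigspacedim} forces the first coordinate of the lift to be odd and this interacts with the totally-ramified hypothesis of Remark \ref{abfam}. In particular I must check that the ``$i_j\ge p/2$ for every $j$'' hypothesis, stated for the coordinates of $i\in G$, correctly translates into ``$\alpha_j\ge p/2$ for every branch point $j$'' after passing through the matrix $A$; this uses that each column of $A$ selects exactly one coordinate of $i$ and that (for $j$ ranging over branch points with nontrivial local monodromy in the $\chi$-eigenspace) every selected coordinate is nonzero. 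Once this translation is pinned down, the inequalities \eqref{rk1 ab} and \eqref{rk2 ab} drop out of the same linear identity used in Lemma \ref{higgsrek}, with $r(2i-n)/n$ replaced by the sum $\sum_j(2\alpha_j-p)/p$ over the relevant branch points.
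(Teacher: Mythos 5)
Your proposal is correct and follows essentially the same route as the paper: the paper's own proof simply says that \eqref{rkeqs2} and \eqref{rk1 ab} are "similar to the cyclic case" (i.e.\ the Viehweg--Zuo isomorphism of the Higgs field on the ample part restricted to eigenspaces, plus complex conjugation, yielding the rank identity $\rk F^{1,0}_{\overline{B},-,\chi}-\rk F^{1,0}_{\overline{B},-,\chi^{-1}}=\rk E^{1,0}_{\overline{B},-,\chi}-\rk E^{1,0}_{\overline{B},-,\chi^{-1}}$), and then establishes \eqref{rk2 ab} by the same fibrewise eigenspace-dimension count for totally ramified covers, arriving at $\sum \frac{r(2i_j-p)}{2p}\geq 0$, which matches your $\sum_j(2\alpha_j-p)/p\geq 0$ up to the bookkeeping of lifting $\chi$ to $\tilde{G}$ that you already flag.
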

\begin{proof}
The proof of \ref{rkeqs2} and \ref{rk1 ab} is similar to the cyclic case. For \ref{rk2 ab} and using the condition satisfied by abelian covers as in Remark \ref{abfam} we see as in the proof of \ref{higgsrek} that
\[ \rk F^{1,0}_{\overline{B},-,\chi^{-1}}-\rk F^{1,0}_{\overline{B},-,\chi}=\rk E^{1,0}_{\overline{B},-,\chi^{-1}}-\rk E^{1,0}_{\overline{B},-,\chi}=\sum\frac{r(2i_j-p)}{2p}\geq 0\]
\end{proof}

\begin{lemma}\label{second fib}
Let $\overline{f}:\overline{S}\to\overline{B}$ be a family of semi-stable $\Z_{2n}$-covers of $\P^1$ representing a non-compact Shimura curve $\sD$ in the ramified Prym locus $\sP_g$. Suppose that after a suitable base change of $\overline{B}$, there exists a fibration $\overline{f}^{\prime}:\overline{S}\to\overline{B}^{\prime}$ such that $g(\overline{B}^{\prime})\geq  \rk F^{1,0}_{\overline{B},-} $. Then $g<8$. 
\end{lemma}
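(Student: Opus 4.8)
The plan is to extract a numerical contradiction from the existence of the auxiliary fibration $\overline{f}^{\prime}$ by bounding $\rk F^{1,0}_{\overline{B},-}$ from above using the singular fibres of $\overline{f}$ and from below using the curve $\overline{B}^{\prime}$. First I would recall from \eqref{sing fib} that since $\sD$ is a non-compact Shimura curve, every fibre $\overline{F}=\tilde{C}\to C$ of $\overline{f}$ over $\Delta_{nc}$ satisfies $p(\overline{F})=\rk F^{1,0}_{\overline{B},-}$, where $p(\overline{F})=\dim P(\tilde{C}/C)$. For a semi-stable degeneration the Prym dimension of a singular fibre of a $\Z_{2n}$-cover is strictly smaller than the generic Prym dimension $g$, and in fact one can bound $p(\overline{F})$ in terms of the combinatorics of the covering; the key point is that $\rk F^{1,0}_{\overline{B},-} = p(\overline{F}) < g$, and more precisely I expect the totally ramified hypothesis and the genus formula \eqref{genus tot} (via Corollary \ref{cyc tot}) to force $\rk F^{1,0}_{\overline{B},-}$ to be a fairly small fraction of $g$.

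Next I would exploit the second fibration. Pulling back the flat subbundle $F^{1,0}_{\overline{B},-}\oplus F^{0,1}_{\overline{B},-}$ along the (base-changed) family and using that it underlies a unitary local system $\V^u_{B,-}$ which is a sub-local-system of $R^1f_*\Q_{\overline{S}\setminus\Delta,-}$, I would observe that this flat part, being trivial on a complete base after the base change, contributes to $H^0$ of the total space $\overline{S}$; comparing with the Leray spectral sequence for $\overline{f}^{\prime}:\overline{S}\to\overline{B}^{\prime}$ one gets that $g(\overline{B}^{\prime})$ many independent holomorphic forms on $\overline{B}^{\prime}$ pull back into the $(1,0)$-part, forcing $\rk F^{1,0}_{\overline{B},-}\geq g(\overline{B}^{\prime})$. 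Combined with the hypothesis $g(\overline{B}^{\prime})\geq \rk F^{1,0}_{\overline{B},-}$ this pins down $g(\overline{B}^{\prime})=\rk F^{1,0}_{\overline{B},-}$ and, more usefully, turns the hypothesis into an equality that feeds back into the fibre count.

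Then I would run the arithmetic: on one side $g = \dim P(\tilde C_t/C_t)$ is computed from the eigenspace dimensions $d_n$ of Corollary \ref{cyc tot} and Lemma \ref{eigspacedim}, giving $g$ as an explicit (roughly linear in $s$) quantity; on the other side $\rk F^{1,0}_{\overline{B},-}=\sum_i \rk F^{1,0}_{\overline{B},-,i}$ is controlled by Lemma \ref{higgsrek}, in particular by the relations \eqref{rkeqs} and \eqref{rkgreater} which bound $\rk F^{1,0}_{\overline{B},-,i}$ in terms of $\rk E^{1,0}_{\overline{B},-,i}$ and the defect $\rk E^{1,0}_{\overline{B},-,i}-\rk E^{1,0}_{\overline{B},-,n-i}=\tfrac{r(2i-n)}{n}$. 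Summing these defects over the eigenspaces, together with the bound $\rk F^{1,0}_{\overline{B},-}=p(\overline{F})$ coming from a singular fibre, should yield an inequality of the shape $g \leq (\text{small constant})\cdot\rk F^{1,0}_{\overline{B},-} \leq (\text{small constant})\cdot g(\overline{B}^{\prime})$; using the equality $g(\overline{B}^{\prime})=\rk F^{1,0}_{\overline{B},-}$ and a Hurwitz-type bound for $g(\overline{B}^{\prime})$ coming from the $\tilde G$-action on $\overline{S}$ and $\overline{B}^{\prime}$, this collapses to a numerical inequality in $g$ alone whose only solutions are $g<8$.

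\textbf{Main obstacle.} The delicate step is the lower bound $\rk F^{1,0}_{\overline{B},-}\geq g(\overline{B}^{\prime})$: one must be careful that the relevant holomorphic forms on $\overline{B}^{\prime}$ genuinely land in the \emph{$\sigma$-anti-invariant, non-ample (flat)} part $F^{1,0}_{\overline{B},-}$ of the Higgs bundle, rather than in the ample part $A^{1,0}_{\overline{B},-}$ or in the $\sigma$-invariant part; this requires knowing that the second fibration is compatible with the $\tilde G$-action (so its Higgs subbundle decomposes along the same eigenspaces) and that a fibration over a complete curve contributes only to the flat part, which is where the semi-stability and the classification of Higgs subbundles over $\overline{B}^{\prime}$ enter. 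After that, the arithmetic is bookkeeping with \eqref{eig tot}, \eqref{genus tot} and Lemma \ref{higgsrek}, and should go through for all $g\geq 8$.
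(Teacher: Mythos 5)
There is a genuine gap: your argument never actually uses the hypothesis $g(\overline{B}^{\prime})\geq \rk F^{1,0}_{\overline{B},-}$ in a way that can produce a contradiction, because you never relate $g(\overline{B}^{\prime})$ to the \emph{fibres} of $\overline{f}$. The mechanism in the paper is geometric: restrict the second fibration to an arbitrary fibre $\tilde{C}$ of $\overline{f}$, obtaining a finite map $\overline{f}^{\prime}|_{\tilde{C}}:\tilde{C}\to\overline{B}^{\prime}$; non-isotriviality of $\overline{f}$ rules out degree $1$, and the fact that the fibres are $\Z_{2n}$-covers of $\P^1$ (equations \ref{abelian equation}) rules out their being double covers of a fixed curve, so $\deg(\overline{f}^{\prime}|_{\tilde{C}})\geq 3$. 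Riemann--Hurwitz then gives $2g(\tilde{C})-2\geq 3(2g(\overline{B}^{\prime})-2)$, i.e.\ a \emph{lower} bound on the genus of every fibre in terms of $g(\overline{B}^{\prime})$, hence in terms of $\rk F^{1,0}_{\overline{B},-}$ by the hypothesis. Applying this to a fibre $\overline{F}$ over $\Delta_{nc}$ and invoking \ref{sing fib} ($p(\overline{F})=\rk F^{1,0}_{\overline{B},-}$) together with the lower bound $\rk F^{1,0}_{\overline{B},-}\geq \sum_i(\rk E^{1,0}_{\overline{B},-,i}-\rk E^{0,1}_{\overline{B},-,i})=\tfrac{rn}{8}\geq 3$ (Lemma \ref{higgsrek} and Corollary \ref{cyc tot}) yields the contradiction. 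None of this multisection/Riemann--Hurwitz input appears in your proposal; your ``Hurwitz-type bound for $g(\overline{B}^{\prime})$ coming from the $\tilde{G}$-action'' is not a substitute, since the relevant cover is $\tilde{C}\to\overline{B}^{\prime}$, not anything attached to the group action on $\overline{S}$.

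Two further points. First, your plan to prove the reverse inequality $\rk F^{1,0}_{\overline{B},-}\geq g(\overline{B}^{\prime})$ via pullback of holomorphic forms is essentially the content used elsewhere (Lemma \ref{second fib1}) to \emph{establish} the hypothesis of the present lemma; here it only upgrades the hypothesis to an equality $g(\overline{B}^{\prime})=\rk F^{1,0}_{\overline{B},-}$, which by itself is not a contradiction. Second, your proposed chain $g\leq(\text{small constant})\cdot\rk F^{1,0}_{\overline{B},-}$ points in the wrong direction and is unsupported: what the eigenspace computations give is a lower bound $\rk F^{1,0}_{\overline{B},-}\geq\tfrac{rn}{8}$, and the paper's contradiction is of the shape $\rk F^{1,0}_{\overline{B},-}=p(\overline{F})\geq 3\,g(\overline{B}^{\prime})-2\geq 3\,\rk F^{1,0}_{\overline{B},-}-2$ with $\rk F^{1,0}_{\overline{B},-}\geq 3$, not an upper bound on $g$ by a multiple of $\rk F^{1,0}_{\overline{B},-}$. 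As written, your argument does not close.
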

\begin{proof}
Assume that such a fibration $\overline{f}^{\prime}$ with the above properties exists but $\tilde{g}\geq 16$. Then restricting $\overline{f}^{\prime}$ to an arbitrary fiber $\tilde{C}$ of $\overline{f}$ we get a morphism $\overline{f}^{\prime}|_{\tilde{C}}:\tilde{C}\to \overline{B}^{\prime}$. Note that $\deg(\overline{f}^{\prime}|_{\tilde{C}})$ does not depend on choice of the fiber $\tilde{C}$. We claim that $\deg(\overline{f}^{\prime}|_{\tilde{C}})> 2$. Note that since the fibration $\overline{f}$ is non-isotrivial, we have that $\deg(\overline{f}^{\prime}|_{\tilde{C}})>1$. Futhermore, since we have a family of $\Z_{2n}$-covers of $\P^1$, it follows from the equations \ref{abelian equation} that the fibers are not double covers of a fixed curve. This proves our claim. 


The Riemann-Hurwitz theorem then implies that
\begin{equation}\label{RH}
2g(\tilde{C})-2\geq 3(2g(\overline{B}^{\prime})-2)
\end{equation}
This together with our assumption yields
\[2g=g(\tilde{C})\geq 3g(\overline{B}^{\prime})-2\geq 3\rk F^{1,0}_{\overline{B},-}-2\]
Now by Lemma ~\ref{higgsrek} and Corollary ~\ref{cyc tot}, we have
\begin{equation}\label{ineqrk}
\rk F^{1,0}_{\overline{B},-}\geq\displaystyle\sum_{i=1}^{n}\rk F^{1,0}_{\overline{B},-,i}\geq\sum_{i=1}^{n}(\rk E^{1,0}_{\overline{B},-,i}-\rk E^{0,1}_{\overline{B},-,i})=\frac{rn}{8}
\end{equation}
Now by the assumptions of the lemma on $n$, the number in the last equality in \ref{ineqrk} is greater than $2$. Since $\sD$ is non-compact, if in ~\ref{RH} we choose a fiber $\overline{F}$ over $\Delta_{nc}$ we get a contradiction to \ref{sing fib} because $g(\overline{B}^{\prime})\geq  \rk F^{1,0}_{\overline{B},-}\geq 3$. Hence we have $\tilde{g}< 15$, or equivalently, $g<8$.
\end{proof}


In the abelian case we have the following analogous statement

\begin{lemma}\label{second fib ab}
Let $\overline{f}:\overline{S}\to\overline{B}$ be a family of semi-stable $\Z_{2n}\times(\Z_{n})^{m-1}$-covers of $\P^1$ as in Remark \ref{abfam} representing a non-compact Shimura curve $\sD$ in the (ramified or unramified) Prym locus $\sP_g$. Suppose that after a suitable base change of $\overline{B}$, there exists a fibration $\overline{f}^{\prime}:\overline{S}\to\overline{B}^{\prime}$ such that $g(\overline{B}^{\prime})\geq  \rk F^{1,0}_{\overline{B},-} $. Then $g<8$. 
\end{lemma}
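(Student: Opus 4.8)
The plan is to mirror, almost verbatim, the argument of Lemma \ref{second fib}, making only the modifications forced by the change of Galois group from $\Z_{2n}$ to $\tilde{G}=\Z_{2n}\times(\Z_n)^{m-1}$ and by the passage from the cyclic-case lemmas (Corollary \ref{cyc tot}, Lemma \ref{higgsrek}) to their abelian counterparts (Lemma \ref{higgsrek ab}). First I would argue by contradiction: suppose a fibration $\overline{f}^{\prime}:\overline{S}\to\overline{B}^{\prime}$ with $g(\overline{B}^{\prime})\geq \rk F^{1,0}_{\overline{B},-}$ exists but $\tilde g\geq 16$ (equivalently $g\geq 8$). Restricting $\overline{f}^{\prime}$ to a fiber $\tilde C$ of $\overline f$ gives a map $\overline{f}^{\prime}|_{\tilde C}:\tilde C\to\overline{B}^{\prime}$ whose degree is independent of $\tilde C$. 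As in the cyclic case, non-isotriviality of $\overline f$ forces $\deg>1$, and the fact that the fibers are abelian $\tilde G$-covers of $\P^1$ of the special shape described in Remark \ref{abfam} — in particular they are not double covers of any fixed curve, which one reads off from the defining equations \ref{abelian equation} together with the structure of $\tilde G$ — forces $\deg(\overline{f}^{\prime}|_{\tilde C})>2$, hence $\geq 3$.

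Next I would invoke Riemann–Hurwitz for $\overline{f}^{\prime}|_{\tilde C}$ to get $2g(\tilde C)-2\geq 3\bigl(2g(\overline{B}^{\prime})-2\bigr)$, so that $2g=g(\tilde C)\geq 3g(\overline{B}^{\prime})-2\geq 3\rk F^{1,0}_{\overline{B},-}-2$. The key quantitative input is a lower bound on $\rk F^{1,0}_{\overline{B},-}$. Summing over the eigenspace decomposition and using Lemma \ref{higgsrek ab} in place of Lemma \ref{higgsrek}, together with the genus/eigenspace formulas from Theorem \ref{eigspacedim}'s surrounding material and Corollary \ref{cyc tot} adapted to the abelian setting (each column of $A$ having a single $1$ entry makes the local eigenspace computation essentially that of a product of totally ramified cyclic covers), one gets
\begin{equation*}
\rk F^{1,0}_{\overline{B},-}\geq \sum_{\chi}\rk F^{1,0}_{\overline{B},-,\chi}\geq \sum_{\chi}\bigl(\rk E^{1,0}_{\overline{B},-,\chi}-\rk E^{0,1}_{\overline{B},-,\chi}\bigr),
\end{equation*}
and this last sum is bounded below by a positive multiple of $\tilde g$ (of the form $\tilde g/8$ after the same arithmetic as in \ref{ineqrk}), which under $\tilde g\geq 16$ is $\geq 2$, hence in fact $\geq 3$ once one is slightly careful. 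Therefore $g(\overline{B}^{\prime})\geq \rk F^{1,0}_{\overline{B},-}\geq 3$, and choosing in the Riemann–Hurwitz inequality a fiber $\overline F$ lying over $\Delta_{nc}$ (possible since $\sD$ is non-compact) contradicts \ref{sing fib}, which says $p(\overline F)=\rk F^{1,0}_{\overline{B},-}$ must match the Prym dimension of that fiber. This contradiction gives $\tilde g<16$, i.e. $g<8$.

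The main obstacle — and the only place where genuinely new work beyond transcribing Lemma \ref{second fib} is needed — is the degree bound $\deg(\overline{f}^{\prime}|_{\tilde C})\geq 3$ and the eigenspace-sum lower bound for $\rk F^{1,0}_{\overline{B},-}$ in the abelian setting. For the degree bound one must rule out that the abelian $\tilde G$-cover $\tilde C$ is a double cover of a fixed curve; here the hypothesis of Remark \ref{abfam} (only $0,1$ entries, exactly one $1$ per column) is essential, since it pins down the branching behaviour enough to argue, as in the cyclic case, directly from the equations \ref{abelian equation}. For the rank estimate one needs the abelian analogue of \ref{genus tot}: because the cover is totally ramified in the sense of Remark \ref{abfam}, the contributions of the various characters $\chi$ to $\rk E^{1,0}_{\overline{B},-,\chi}-\rk E^{0,1}_{\overline{B},-,\chi}$ add up, via Lemma \ref{eigspacedim} and Lemma \ref{higgsrek ab}, to the same type of expression $\tfrac{rn}{8}$ (with $r$ governed by the relevant ramification indices) that appears in \ref{ineqrk}; I would verify that this is $>2$ under the hypothesis that forces $\tilde g\geq 16$ in the contradiction setup, exactly as in Lemma \ref{second fib}. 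Everything else — the Riemann–Hurwitz step, the appeal to \ref{sing fib}, the non-isotriviality remark — carries over unchanged.
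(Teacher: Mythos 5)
Your proposal is correct and follows essentially the same route as the paper: the paper in fact gives no separate proof of this lemma, merely asserting it is the analogue of Lemma \ref{second fib}, and your writeup is precisely the intended adaptation (degree bound $\geq 3$ from the structure of the $\tilde{G}$-covers, Riemann--Hurwitz, the eigenspace-sum lower bound on $\rk F^{1,0}_{\overline{B},-}$ via Lemma \ref{higgsrek ab}, and the contradiction with \ref{sing fib} over $\Delta_{nc}$). You correctly identify the only two points needing new verification in the abelian setting, so nothing further is required.
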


\section{Shimura families in the Prym locus} \label{Shimura in Prym}

Let $\overline{f}:\overline{S}\to\overline{B}$ be a family of semi-stable $\Z_{2n}$-covers of $\P^1$. We will assume that the $\Z_{2n}$-action extends to $\overline{S}$ and remark that this can always be achieved using suitable finite covers. The Galois cover $\overline{\Pi}:\overline{S}\to\overline{Y}$ induces a Galois cover $\overline{\Pi}^{\prime}:S^{\prime}\to\widetilde{Y}$ whose branch locus is a normal crossing divisor and with $\gal(\overline{\Pi}^{\prime})\cong G$, where $\widetilde{Y}\to\overline{Y}$ is the minimal resolution of singularities. Let $\widetilde{S}\to S^{\prime}$ be the minimal resolution of
singularities. Then there is an induced birational contraction $\widetilde{S}\to \overline{S}$. Since $\Pi^{\prime}$ is a $\Z_{2n}$-cover of surfaces, it is defined by a relation $\sL^n\equiv\sO_Y(R)$. \\

Restricting to a general fiber $\widetilde{\Gamma}$ of $\widetilde{\varphi}$, one has
\begin{equation}\label{restr}
\mathcal{L}^{(i)}|_{\widetilde{\Gamma}}=\begin{cases}
\sO_{\P^1}(\frac{i\alpha}{2n}), & \text{if } \frac{i\alpha}{2n}\in\Z,\\
\sO_{\P^1}(\lfloor\frac{i\alpha}{2n}\rfloor+1),  & \text{otherwise.} 
\end{cases} 
\end{equation}
Let $\widetilde{R}\subseteq\widetilde{Y}$  be the reduced branch divisor of $\Pi^{\prime}$. Then by \cite{Vie82}, Lemma 1.7 one has the inclusion
\begin{equation}\label{inclu vie}
\tau_1: \widetilde{\Pi}_* \Omega^1_{\widetilde{S}}\hookrightarrow \Omega^1_{\widetilde{Y}}\bigoplus(\bigoplus_{j=1}^{2n-1}\Omega^1_{\widetilde{Y}}(\widetilde{R})\otimes {\mathcal{L}^{(j)}}^{-1})
\end{equation}

\begin{lemma}\label{inclu eigen}
For any $1\leq i\leq n-1$, there is a sheaf morphism 
\begin{equation}\label{sheaf morph}
\rho_i:\widetilde{\varphi}^*F^{1,0}_{\overline{B},-,i}\to \widetilde{\Pi}_*\Omega^1_{\widetilde{S}}
\end{equation}
such that the induced canonical morphism
\begin{equation}\label{sheaf homo}
F^{1,0}_{\overline{B},-,i}=\widetilde{\varphi}_*\widetilde{\varphi}^*F^{1,0}_{\overline{B},-,i}\to \widetilde{\varphi}_*\widetilde{\Pi}_* \Omega_{\widetilde{S}}=\widetilde{f}_*\Omega^1_{\widetilde{S}}=\overline{f}_*\Omega^1_{\overline{S}}\to \overline{f}_*\Omega^1_{\overline{S}/\overline{B}}(\log\Upsilon)=E^{1,0}_{\overline{B}}
\end{equation}
conincides with the inclusion $F^{1,0}_{\overline{B},-,i}\hookrightarrow F^{1,0}_{\overline{B}}\to E^{1,0}_{\overline{B}}$. Moreover, we may choose $\rho_i$ so that the image of $\rho_i$ is contained in $\Omega_{\widetilde{Y}}(\widetilde{R})_-\otimes {\mathcal{L}^{(i)}}^{-1}$ under the inclusion \ref{inclu vie}. 
\end{lemma}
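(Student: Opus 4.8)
The plan is to follow the template of Viehweg--Zuo's construction of comparison maps for cyclic covers, adapted to the $\sigma$-eigenspace decomposition. The starting point is the inclusion \ref{inclu vie} of Viehweg \cite{Vie82}, Lemma 1.7, which decomposes $\widetilde{\Pi}_*\Omega^1_{\widetilde{S}}$ into eigen-pieces indexed by $j=0,\dots,2n-1$, the $j$-th piece sitting inside $\Omega^1_{\widetilde{Y}}(\widetilde{R})\otimes {\mathcal{L}^{(j)}}^{-1}$. First I would recall that the flat part $F^{1,0}_{\overline{B},-}$ of the Higgs bundle, being a subbundle of $E^{1,0}_{\overline{B},-}=\overline{f}_*\Omega^1_{\overline{S}/\overline{B}}$ with trivial Higgs field, pulls back under $\widetilde{\varphi}$ to a bundle that is, in a suitable sense, generated by global sections along the fibers (this uses that $F^{1,0}$ corresponds to a unitary local subsystem, hence is numerically flat). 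The key point is then to produce, for each eigen-index $i$, a lift $\rho_i$ of the tautological inclusion $\widetilde{\varphi}^*F^{1,0}_{\overline{B},-,i}\hookrightarrow \widetilde{\varphi}^*E^{1,0}_{\overline{B}}$ up along the edge map $\widetilde{\Pi}_*\Omega^1_{\widetilde{S}}\to \overline{f}_*\Omega^1_{\overline{S}/\overline{B}}(\log\Upsilon)$, using that the obstruction to lifting lies in a sheaf that has no sections on the (affine-over-$\overline{B}$, or negative-degree-on-fibers) locus in question.

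Concretely, the steps would be: (1) Write the composite map in \ref{sheaf homo} and identify its kernel/cokernel in terms of the log-structure divisor $\Upsilon$ and the branch divisor $\widetilde{R}$; the map $\overline{f}_*\Omega^1_{\overline{S}}\to \overline{f}_*\Omega^1_{\overline{S}/\overline{B}}(\log\Upsilon)$ has image precisely because the relative log-differentials receive the restriction of absolute differentials. (2) Decompose everything into $\tilde G$-eigenspaces, then further into $\sigma^{\pm}$-eigenspaces, and restrict to the minus part; since $F^{1,0}_{\overline{B},-,i}$ is flat it sits in the kernel of the Kodaira--Spencer/Higgs map, so it lands in the "constant" sub-VHS, which geometrically forces its local sections to extend to relative $1$-forms without poles along the vertical part — this is what lets one land inside $\widetilde{\Pi}_*\Omega^1_{\widetilde{S}}$ rather than some twist. (3) Among the $2n$ summands on the right of \ref{inclu vie}, the $i$-th eigenspace of $F^{1,0}_{\overline{B},-}$ maps, by equivariance of all constructions under $G=\tilde G/\langle\sigma\rangle$ and under $\sigma$, only to the summand $\Omega^1_{\widetilde{Y}}(\widetilde{R})_-\otimes {\mathcal{L}^{(i)}}^{-1}$; this pins down the target and gives the last sentence of the lemma.

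The main obstacle I expect is step (2): showing the lift $\rho_i$ actually exists as a genuine sheaf morphism (not merely a rational or fiberwise one), i.e. controlling the possible poles along the boundary $\Delta_{nc}$ and along the exceptional locus of the resolutions $\widetilde{S}\to S'\to\overline S$ and $\widetilde Y\to\overline Y$. The flatness of $F^{1,0}_{\overline{B},-,i}$ is the crucial leverage here: because the corresponding local subsystem is unitary, its Deligne extension has vanishing residues, so the associated Hodge subbundle has no logarithmic poles, and the comparison map lands in the honest (non-log) pushforward $\widetilde{\Pi}_*\Omega^1_{\widetilde{S}}$. Once that vanishing-residue input is in place, the existence of $\rho_i$ and the compatibility with the inclusion in \ref{sheaf homo} is a diagram chase over the smooth locus extended across codimension-one boundary by Hartogs/reflexivity, and the eigenspace bookkeeping in step (3) is formal. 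I would therefore organize the write-up around first establishing the residue-vanishing of the flat part, then building $\rho_i$ on the open locus where $\overline f$ is smooth and $\widetilde{\Pi}$ is flat, and finally extending and checking the target constraint using the $\Z_{2n}$-equivariant decomposition \ref{inclu vie}.
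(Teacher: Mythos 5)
Your plan follows essentially the same route as the paper: the paper obtains the comparison morphism $\rho$ on the full flat bundle $\widetilde{\varphi}^*F^{1,0}_{\overline{B}}$ by invoking the proof of Lemma 4.8 of \cite{CLZ} (which is precisely the unitary/flat-part lifting argument you sketch in steps (1)--(2)), then restricts to $\widetilde{\varphi}^*F^{1,0}_{\overline{B},-}$ and to the $i$-th eigenspace, and uses $\tilde{G}$- and $\sigma$-equivariance of the inclusion \ref{inclu vie} to place the image in the summand $\Omega_{\widetilde{Y}}(\widetilde{R})_-\otimes {\mathcal{L}^{(i)}}^{-1}$, which is exactly your step (3). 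The only difference is that you propose to re-derive the existence of the lift from the residue-vanishing of the unitary part, whereas the paper treats that existence as a citation.
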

\begin{proof}
According to the proof of \cite{CLZ}, Lemma 4.8, there exists a sheaf morphism 
\begin{equation}\label{sheaf homo3}
\rho: \widetilde{\varphi}^*F^{1,0}_{\overline{B}}\to \widetilde{\Pi}_*\Omega^1_{\widetilde{S}}
\end{equation}
which satisfies the above properties. By restricting first to $\widetilde{\varphi}^*F^{1,0}_{\overline{B},-}$ and subsequently to $\widetilde{\varphi}^*F^{1,0}_{\overline{B},-,i}$ we obtain $\rho_i$ as in \ref{sheaf morph} such that the induced morphism 
\ref{sheaf morph} coincides with the inclusion $F^{1,0}_{\overline{B},-,i}\hookrightarrow F^{1,0}_{\overline{B}}\to E^{1,0}_{\overline{B}}$. Combining \ref{sheaf homo3} with \ref{sheaf homo}, one obtains a sheaf morphism
\[\widetilde{\varphi}^*F^{1,0}_{\overline{B}}\to \Omega^1_{\widetilde{Y}}\bigoplus(\bigoplus_{j=1}^{2n-1}\Omega^1_{\widetilde{Y}}
(\widetilde{R})\otimes {\mathcal{L}^{(j)}}^{-1})\]
The above morphism respects the $\tilde{G}$-action and in particular the $\sigma$-action, we obtain that the image of $\rho_i$ lies in $\Omega_{\widetilde{Y}}(\widetilde{R})_-\otimes {\mathcal{L}^{(i)}}^{-1}$. 
\end{proof}

\begin{lemma}\label{trivial base change}
Let $\widetilde{R}\subseteq\widetilde{Y}$ be the reduced branch divisor of $\Pi^{\prime}$ as above, and $\widetilde{\Gamma}$ be a general fiber of $\widetilde{\varphi}$. If $\widetilde{R}$ contains at least one section of $\widetilde{\varphi}$, and there exist $1\leq i_1\leq i_2\leq n-1$ such that $\rk F^{1,0}_{\overline{B},-, i_1}\neq 0, \rk F^{1,0}_{\overline{B},-, i_2}\neq 0$ and
\begin{equation}\label{inter}
\widetilde{\Gamma}\cdotp(\omega_{\widetilde{Y}}(\widetilde{R})_-\otimes ({\mathcal{L}^{(i_1)}}^{-1}\otimes {\mathcal{L}^{(i_2)}}^{-1}))<0
\end{equation}
Then both $F^{1,0}_{\overline{B},-, i_1}, F^{1,0}_{\overline{B},-, i_2}$ become trivial after a suitable finite \'etale base change.
\end{lemma}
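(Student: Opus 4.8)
\textbf{Proof strategy for Lemma \ref{trivial base change}.}

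The plan is to exploit the sheaf morphisms $\rho_{i_1}$ and $\rho_{i_2}$ from Lemma \ref{inclu eigen} together with the negativity hypothesis \ref{inter} to force the flat bundles $F^{1,0}_{\overline{B},-,i_1}$ and $F^{1,0}_{\overline{B},-,i_2}$ to be trivialized after an étale base change. First I would recall that by Lemma \ref{inclu eigen} the image of $\rho_{i_k}$ ($k=1,2$) lands inside $\Omega^1_{\widetilde{Y}}(\widetilde{R})_-\otimes {\mathcal{L}^{(i_k)}}^{-1}$, so, after restricting to a general fiber $\widetilde{\Gamma}$ of $\widetilde{\varphi}$, we obtain nonzero maps $\widetilde{\Gamma}\cdot F^{1,0}_{\overline{B},-,i_k}|_{\widetilde{\Gamma}}$ hitting a line bundle on $\widetilde{\Gamma}\cong\P^1$ of the degree prescribed by \ref{restr}. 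The key algebraic trick is to tensor the two morphisms: the product $\rho_{i_1}\otimes\rho_{i_2}$ produces a nonzero section of a sheaf whose restriction to $\widetilde{\Gamma}$ involves $\omega_{\widetilde{Y}}(\widetilde{R})_-\otimes({\mathcal{L}^{(i_1)}}^{-1}\otimes{\mathcal{L}^{(i_2)}}^{-1})$ — this is where the quantity appearing in \ref{inter} enters, and the section being a map out of the ample-free flat piece means it cannot vanish identically once the ranks $\rk F^{1,0}_{\overline{B},-,i_k}$ are nonzero.

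Next I would use the hypothesis that $\widetilde{R}$ contains at least one section of $\widetilde{\varphi}$: this section provides the ``extra'' positivity that, combined with \ref{inter} being strictly negative, yields a contradiction to the effectivity/global-generation of the relevant bundle unless the flat subbundles are forced to be ``as trivial as possible.'' Concretely, the negativity \ref{inter} means that the line bundle $\omega_{\widetilde{Y}}(\widetilde{R})_-\otimes({\mathcal{L}^{(i_1)}}^{-1}\otimes{\mathcal{L}^{(i_2)}}^{-1})$ has negative degree on the general fiber, so any nonzero morphism from a subbundle of $F^{1,0}_{\overline{B}}$ (a bundle arising from a polarized VHS, hence with controlled, in fact flat, slopes on the flat part) into it must degenerate along $\widetilde{\Gamma}$; pushing this down to $\overline{B}$ and invoking the fact that $F^{1,0}_{\overline{B},-,i_k}$ underlies a unitary local system (so its determinant is torsion) forces the local system, after a finite étale base change that kills the monodromy, to be trivial. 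I would model this argument on \cite{CLZ} (the analogous step in their treatment of cyclic covers) and on the Arakelov-type inequalities in \cite{VZ04}.

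The main obstacle I anticipate is making precise the passage ``nonzero fiberwise morphism into a negative line bundle on $\P^1$ $\Rightarrow$ the subbundle is unitary-flat and hence virtually trivial.'' One has to be careful that $F^{1,0}_{\overline{B},-,i_k}$ is not merely a quotient or sub of a bundle but genuinely corresponds to a unitary summand $\V^u_{B,-}$, so that its Higgs field is zero and its monodromy representation is finite up to the torsion of the determinant; then a finite étale cover of $\overline{B}$ trivializes it. The bookkeeping of the eigenspace index $i$ versus $n-i$ under the decomposition of Lemma \ref{higgsrek}, and the exact computation of $\mathcal{L}^{(i)}|_{\widetilde{\Gamma}}$ via \ref{restr}, will need to be tracked carefully, but these are routine once the structural argument is in place. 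The genuinely delicate point is ensuring that ``becomes trivial after a suitable finite étale base change'' is legitimate, i.e.\ that the obstruction to triviality is exactly a finite-order character, which follows because a unitary local system with trivial associated $\R$-local system up to isogeny has finite monodromy.
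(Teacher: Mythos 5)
Your proposal assembles the right ingredients (the maps $\rho_{i_1},\rho_{i_2}$ from Lemma \ref{inclu eigen}, the fibrewise negativity \ref{inter}, poly-stability of bundles coming from unitary local systems, the section contained in $\widetilde{R}$, and Deligne), but the logical chain is not the one that actually closes the argument, and it contains one assertion that is false as stated. First, the wedge/tensor trick is used in the paper \emph{by contradiction}: if some unitary summand $\sU\subseteq F^{1,0}_{\overline{B},-,i}$ had image of rank $\geq 2$ under $\rho_i$, then $\tau\circ\rho_{i_1}\wedge\tau\circ\rho_{i_2}$ would have non-zero image $\sC$, which is semi-positive (as a quotient of a pullback of a poly-stable slope-zero bundle), whereas \ref{inter} says the target line bundle $\omega_{\widetilde{Y}}(\widetilde{R})_-\otimes({\mathcal{L}^{(i_1)}}^{-1}\otimes{\mathcal{L}^{(i_2)}}^{-1})$ has negative degree on the general fiber and so admits no non-zero semi-positive subsheaf. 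The conclusion is therefore that the wedge \emph{vanishes} and each image $M=\rho_i(\widetilde{\varphi}^*\sU)$ is an invertible sheaf with $M\cdotp D=0$ for every component $D\subseteq\widetilde{R}$; your write-up has this the wrong way around (``the section \ldots cannot vanish identically''), and ``must degenerate along $\widetilde{\Gamma}$'' is too vague to substitute for the rank-one conclusion.

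Second, and more seriously, you invoke ``$F^{1,0}_{\overline{B},-,i_k}$ underlies a unitary local system, so its monodromy representation is finite up to the torsion of the determinant.'' A unitary local system of rank $>1$ on a curve need not have finite monodromy; if it did, the lemma would be immediate and neither the hypothesis on the section of $\widetilde{\varphi}$ inside $\widetilde{R}$ nor the negativity \ref{inter} would be needed. The actual role of the section $D\subseteq\widetilde{R}$ is not to supply ``extra positivity'': one decomposes $F^{1,0}_{\overline{B},-,i}=\oplus_j\sU_{ij}$ into irreducible unitary summands, uses the rank-one statement to get $\deg\sO_D(M_{ij})=0$ with $M_{ij}=\rho_i(\widetilde{\varphi}^*\sU_{ij})$, identifies $D\cong\overline{B}$ so that $\sO_D(M_{ij})$ becomes a degree-zero invertible quotient of the poly-stable bundle $\sU_{ij}$, hence a direct summand, hence (by irreducibility) equal to $\sU_{ij}$. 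Only after this reduction to rank one does the appeal to \cite{De71}, \S 4.2 give finiteness of the monodromy and triviality after a finite \'etale base change. Without the rank-one reduction via the section, your final step does not go through.
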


\begin{proof}
This is a generalization of \cite{CLZ}, Lemma 4.9. In the first step, one shows that for any non-zero unitary subbundle $\sU\subseteq F^{1,0}_{\overline{B},-, i}$  with $i=i_1, i_2$, the image of $\rho_i(\widetilde{\varphi}^*\sU)$ is an invertible sheaf $M$ which is nef and satisfies $M^2=0$ and $M\cdotp D=0$ for any component $D\subseteq \widetilde{R}$. By Lemma \ref{inclu eigen}, $\rho_{i_j}(\widetilde{\varphi}^*\sU)$ is contained in $\Omega_{\widetilde{Y}}(\widetilde{R})_-\otimes {\mathcal{L}^{(i)}}^{-1}$ for $j=1,2$ and it is non-zero. In the same way as in \cite{LZ}, Lemma 7.3, it suffices to show that the image $\rho_{i_j}(\widetilde{\varphi}^*\sU)$ is a susheaf of rank one. We prove this by contradiction. If the claim is not true, consider the wedge product
\begin{align}\label{wedge}
\tau\circ\rho_{i_1}\wedge\tau\circ\rho_{i_2}: \widetilde{\varphi}^*\sU\otimes \widetilde{\varphi}^*F^{1,0}_{\overline{B},-,i_2}\to \wedge^2\Omega^1_{\widetilde{Y}}(\widetilde{R})_-\otimes ({\mathcal{L}^{(i_1)}}^{-1}\otimes {\mathcal{L}^{(i_2)}}^{-1})\to \omega_{\widetilde{Y}}(\widetilde{R})\otimes ({\mathcal{L}^{(i_1)}}^{-1}\otimes {\mathcal{L}^{(i_2)}}^{-1})
\end{align}
Denote the image of the above map by $\sC$. We show that $\sC$ is semi-positive in the sense that if $\sE$ is a locally free sheaf on $\widetilde{Y}$, and $\psi:Z\to \widetilde{Y}$ is any morphism from a smooth conplete curve $Z$, the pull-back $\psi^* \sE$ has no quotient line bundle of negative degree. For such a morphism $\psi$, the sheaf $\psi^* (\widetilde{\varphi}^*F^{1,0}_{\overline{B},-,i_1}\otimes\widetilde{\varphi}^*F^{1,0}_{\overline{B},-,i_2})$ is poly-stable of slope zero as it comes from a unitary representation which implies that $\widetilde{\varphi}^*\sU\otimes \widetilde{\varphi}^*F^{1,0}_{\overline{B},-,i_2}$ is semi-positive. This shows that $\sC$ as a quotient of $\widetilde{\varphi}^*\sU\otimes \widetilde{\varphi}^*F^{1,0}_{\overline{B},-,i_2}$ is also semi-positive. Furthermore, it follows from \ref{inter} that $\omega_{\widetilde{Y}}(\widetilde{R})\otimes ({\mathcal{L}^{(i_1)}}^{-1}\otimes {\mathcal{L}^{(i_2)}}^{-1})$ cannot contain any non-zero semi-positive subsheaf. It contradicts the semipositivity of $\sC$. Finally using \cite{De71}, \S 4.2, it suffices to show that $F^{1,0}_{\overline{B},-, i}$ is a direct sum of lines bundles after suitable etale base change for $i=i_1,i_2$. Let $D\subseteq \widetilde{R}$ be a section and let $F^{1,0}_{\overline{B},-, i}=\oplus_j\sU_{ij}$ be the decomposition into irreducible subbundles. By what we have shown at the beginning for the unitary $\sU_{ij}\subseteq F^{1,0}_{\overline{B},-, i}$, we obtain $M_{ij}\cdotp D=0$, i.e., $\deg\sO_D(M_{ij})=0$ where $M_{ij}=\rho_i(\widetilde{\varphi}^*\sU_{ij})$. As $D$ is a section, $D\cong \overline{B}$. Therefore the sheaf $\sO_D(M_{ij})$ can be viewed as an invertible sheaf on $\overline{B}$, which is a quotient of $\sU_{ij}$, as $M_{ij}$ is a quotient of $\widetilde{\varphi}^*\sU_{ij}$. Since $\sU_{ij}$ comes from a unitary local system, $\sU_{ij}$ is poly-stable. Therefore $\sU_{ij}=\sO_D(M_{ij})\oplus\sU^{\prime}_{ij}$. Since $\sU_{ij}$ is irreducible, $\sU_{ij}=\sO_D(M_{ij})$ is a line bundle. 
\end{proof}

\begin{corollary}\label{inter neg1}
Assume also that $\widetilde{R}$ contains at least one section of $\widetilde{\varphi}$ and that $\rk F^{1,0}_{\overline{B},-,i_0}\neq 0$ for some $i_0\geq n/2$. Then after a suitable unramified base change, $F^{1,0}_{\overline{B},-,i}$ becomes trivial for any $n-i_0\leq i\leq  i_0$. 
\begin{proof}
First of all note that $\widetilde{\Gamma}\cdotp\omega_{\widetilde{Y}}(\widetilde{R})_-= \deg \widetilde{R}|_{\widetilde{\Gamma}}$. Therefoere by \ref{restr}, it follows that 
\[\widetilde{\Gamma}\cdotp(\omega_{\widetilde{Y}}(\widetilde{R})_-\otimes ({\mathcal{L}^{(i)}}^{-1}\otimes {\mathcal{L}^{(i_0)}}^{-1}))<0, \forall n-i_0\leq i\leq  i_0.\]
The claim then follows from Lemma ~\ref{trivial base change}. 
\end{proof}
\end{corollary}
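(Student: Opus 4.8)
The plan is to obtain the corollary as a formal consequence of Lemma~\ref{trivial base change}, applied to the pairs of indices $(i,i_0)$ with $n-i_0\le i\le i_0$. For each such pair I would check the three hypotheses of that lemma in turn: (i) $\widetilde R$ contains a section of $\widetilde\varphi$; (ii) $\rk F^{1,0}_{\overline B,-,i}\neq 0$ and $\rk F^{1,0}_{\overline B,-,i_0}\neq 0$; and (iii) the intersection inequality $\widetilde\Gamma\cdot\bigl(\omega_{\widetilde Y}(\widetilde R)_-\otimes({\mathcal{L}^{(i)}}^{-1}\otimes{\mathcal{L}^{(i_0)}}^{-1})\bigr)<0$. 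Since $1\le n-i_0\le i\le i_0\le n-1$, the pair $(i_1,i_2)=(i,i_0)$ is an admissible choice of indices for that lemma.

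Hypothesis (i) is part of the standing assumption of the corollary, so nothing is needed. For (ii), the non-vanishing of $\rk F^{1,0}_{\overline B,-,i_0}$ is exactly what is assumed; for the running index $i$ I would simply note that there is nothing to prove when $\rk F^{1,0}_{\overline B,-,i}=0$ (the bundle is then already trivial), so we may assume it is non-zero --- and in fact for $i<n/2$ this non-vanishing is automatic from \eqref{rknzero} of Lemma~\ref{higgsrek}, since there $\rk E^{1,0}_{\overline B,-,i}>\rk E^{1,0}_{\overline B,-,n-i}$.

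The substance of the argument is (iii). I would start from the identity $\widetilde\Gamma\cdot\omega_{\widetilde Y}(\widetilde R)_-=\deg\widetilde R|_{\widetilde\Gamma}$, immediate from adjunction along the $\P^1$-fibration $\widetilde\varphi$ and the definition of the $(-)$-part, and then insert the restriction formula \eqref{restr}, which gives $\deg(\mathcal{L}^{(j)}|_{\widetilde\Gamma})=\bigl\lceil j\alpha/2n\bigr\rceil$ on a general fibre. This reduces (iii) to the numerical statement that
\[
\deg\widetilde R|_{\widetilde\Gamma}-\Bigl\lceil\tfrac{i\alpha}{2n}\Bigr\rceil-\Bigl\lceil\tfrac{i_0\alpha}{2n}\Bigr\rceil<0 .
\]
Because $j\mapsto\deg(\mathcal{L}^{(j)}|_{\widetilde\Gamma})$ is monotone, the hypothesis $i+i_0\ge n$ (equivalently $i\ge n-i_0$) is precisely what forces the two ceilings to sum past $\deg\widetilde R|_{\widetilde\Gamma}$. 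Having verified (i)--(iii), Lemma~\ref{trivial base change} yields that $F^{1,0}_{\overline B,-,i}$ and $F^{1,0}_{\overline B,-,i_0}$ become trivial after a finite \'etale base change; I would then conclude by running over the finitely many indices $i$ in the range and replacing $\overline B$ by a common finite \'etale base change, which is harmless since a composite of finite \'etale covers is finite \'etale.

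The step I expect to be the main obstacle is the strict negativity in (iii): for ``balanced'' values of $\deg\widetilde R|_{\widetilde\Gamma}$ relative to $2n$ the two ceilings could a priori sum to exactly $\deg\widetilde R|_{\widetilde\Gamma}$, and ruling out this equality is where one has to exploit that the $\Z_{2n}$-cover is totally ramified --- or, in the abelian version of the statement, the precise shape of the matrix $A$ in Remark~\ref{abfam}. Everything else is a formal manipulation of the eigenspace decomposition together with an appeal to Lemma~\ref{trivial base change}.
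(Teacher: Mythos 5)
Your proposal is essentially identical to the paper's proof: both establish the intersection inequality \ref{inter} for the pairs $(i,i_0)$ with $n-i_0\leq i\leq i_0$ via the identity $\widetilde{\Gamma}\cdot\omega_{\widetilde{Y}}(\widetilde{R})_-=\deg\widetilde{R}|_{\widetilde{\Gamma}}$ together with the restriction formula \ref{restr}, and then invoke Lemma \ref{trivial base change}. Your additional remarks --- discarding indices with $\rk F^{1,0}_{\overline{B},-,i}=0$ and flagging that strict negativity in the ``balanced'' case must come from total ramification --- concern points that the paper's one-line verification also leaves implicit, so the two arguments are at the same level of detail and take the same route.
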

The following lemma, follows from the general proof of Lemma 4.12 of \cite{CLZ}. Indeed, all of the steps in the proof of this result respect the action of $\sigma$. 
\begin{lemma}\label{inter neg}
Assume that there exist $1\leq i_1\leq i_2\leq n-1$, such that \ref{inter} holds, and that $H^0(\overline{S}, \Omega^1_{\overline{S}})_{-,i_1}\neq 0$ and $H^0(\overline{S}, \Omega^1_{\overline{S}})_{-,i_2}\neq 0$. Then there exists a unique fibration $\overline{f}^{\prime}:\overline{S}\to \overline{B}^{\prime}$ such that 
\begin{equation}\label{pull-back gen}
H^0(\overline{S}, \Omega^1_{\overline{S}})_{-,i}\subseteq (\overline{f}^{\prime})^* H^0(\overline{B}^{\prime}, \Omega^1_{\overline{B}^{\prime}}) \text{ for } i=i_1, i_2
\end{equation}
\end{lemma}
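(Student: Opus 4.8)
The plan is to run the argument of \cite{CLZ}, Lemma 4.12, keeping track of the $\sigma$-action throughout. The starting point is Lemma \ref{trivial base change}: under hypothesis \ref{inter} together with $\rk F^{1,0}_{\overline{B},-,i_1}\neq 0$ and $\rk F^{1,0}_{\overline{B},-,i_2}\neq 0$ (which is equivalent to $H^0(\overline{S},\Omega^1_{\overline{S}})_{-,i_j}\neq 0$ after a base change, since these forms pull back from the unitary part via the morphism $\rho_{i_j}$ of Lemma \ref{inclu eigen}), both $F^{1,0}_{\overline{B},-,i_1}$ and $F^{1,0}_{\overline{B},-,i_2}$ are direct sums of line bundles coming from a unitary local system, and each such summand $\sU_{ij}$ maps via $\rho_{ij}$ to an invertible sheaf $M_{ij}$ on $\widetilde{Y}$ that is nef with $M_{ij}^2=0$ and $M_{ij}\cdotp D=0$ for every component $D$ of $\widetilde{R}$. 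First I would fix such a summand $\sU$ in $F^{1,0}_{\overline{B},-,i_1}$ and its image $M$; the key numerical input is that $M$ is a nef divisor of self-intersection zero on the surface $\widetilde{Y}$, hence (after passing to a suitable multiple and using that $\widetilde{Y}$ is a fibred surface) $M$ is, up to numerical equivalence, a rational multiple of the class of a fibre of some fibration $\overline{f}^{\prime}\colon\overline{S}\to\overline{B}^{\prime}$ — this is the standard Zariski/Hodge-index argument that an effective nef divisor with $M^2=0$ on a surface induces a pencil. The uniqueness of $\overline{f}^{\prime}$ comes from the fact that the class of $M$ determines the pencil.

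Next I would verify that the $1$-forms in $H^0(\overline{S},\Omega^1_{\overline{S}})_{-,i}$ for $i=i_1,i_2$ actually descend to $\overline{B}^{\prime}$. Here the point is that each global section in the $(-,i)$-eigenspace pulls back, under $\rho_i$, into $\omega_{\widetilde{Y}}(\widetilde{R})_-\otimes {\mathcal{L}^{(i)}}^{-1}$, and the image invertible subsheaf is one of the $M_{ij}$, all of which are numerically proportional to the same fibre class $F'$ of $\overline{f}^{\prime}$ (because $M_{ij}^2=0=M_{ij}\cdotp M_{i'j'}$, the latter intersection number being forced to vanish by the same Hodge-index reasoning applied to $M_{ij}+M_{i'j'}$, whose wedge would otherwise land in a negative subsheaf, contradicting the semipositivity computation already used in the proof of Lemma \ref{trivial base change}). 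A form whose associated foliation is the one cut out by $F'$ is, up to the action of the birational contraction $\widetilde{S}\to\overline{S}$ and pullback along $\widetilde{\varphi}$, a pullback from $\overline{B}^{\prime}$; chasing this back through the identifications of Lemma \ref{inclu eigen} gives the inclusion \ref{pull-back gen}.

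The main obstacle I expect is exactly the coherence step: showing that the various rank-one images $M_{ij}$ arising from summands of $F^{1,0}_{\overline{B},-,i_1}$ \emph{and} of $F^{1,0}_{\overline{B},-,i_2}$ all define the \emph{same} fibration, rather than possibly several different pencils — and that the eigenspace indices $i_1,i_2$ (which may differ) do not obstruct this. The resolution is that any two of these line bundles $M, M'$ satisfy $M\cdotp M'=0$: if not, $M+M'$ would be nef with $(M+M')^2>0$, hence big, and then the wedge map $\tau\circ\rho_{i_1}\wedge\tau\circ\rho_{i_2}$ would produce a nonzero semipositive subsheaf of $\omega_{\widetilde{Y}}(\widetilde{R})\otimes({\mathcal{L}^{(i_1)}}^{-1}\otimes{\mathcal{L}^{(i_2)}}^{-1})$, contradicting \ref{inter} exactly as in the proof of Lemma \ref{trivial base change}. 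Two nef divisors of self-intersection zero with zero mutual intersection on a surface lie on a single ruling, which pins down the unique fibration $\overline{f}^{\prime}$; the rest is bookkeeping with the $\sigma$-equivariant identifications, none of which presents a genuine difficulty since, as noted, every step of the original \cite{CLZ} argument is compatible with the $\tilde{G}$-action and in particular with the involution $\sigma$.
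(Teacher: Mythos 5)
The paper offers no argument of its own for this lemma: it simply asserts that the statement ``follows from the general proof of Lemma 4.12 of \cite{CLZ}'' because every step of that proof respects the $\sigma$-action. Measured against that intended argument, your reconstruction does isolate the decisive mechanism: condition \ref{inter} makes $\omega_{\widetilde{Y}}(\widetilde{R})_-\otimes{\mathcal{L}^{(i_1)}}^{-1}\otimes{\mathcal{L}^{(i_2)}}^{-1}$ of negative degree on the general fibre $\widetilde{\Gamma}$, so it contains no nonzero semipositive subsheaf, and hence the wedge $\tau\circ\rho_{i_1}\wedge\tau\circ\rho_{i_2}$ vanishes. That is exactly the input the cited proof runs on, and your observation that this part of the argument is $\sigma$-equivariant is the only thing the paper itself actually adds.

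The genuine gap is in how you manufacture the fibration from this vanishing. You appeal to the principle that ``an effective nef divisor with $M^2=0$ on a surface induces a pencil,'' but this is not a theorem: a nef class of square zero need not be semi-ample (take the section of $\P(\sO_E\oplus L)$ over an elliptic curve $E$ with $L$ of degree zero and non-torsion; it is nef with self-intersection zero and $h^0(nC_0)=1$ for all $n$, so it moves in no pencil). Moreover your $M$ lives on $\widetilde{Y}$, whereas the lemma asserts a fibration of $\overline{S}$, so even a pencil on $\widetilde{Y}$ would still have to be lifted, and ``the class of $M$ determines the pencil'' does not by itself give uniqueness. The missing idea is the Castelnuovo--de Franchis theorem applied directly to the $1$-forms: the vanishing of the wedge shows that any $\omega_1\in H^0(\overline{S},\Omega^1_{\overline{S}})_{-,i_1}$ and $\omega_2\in H^0(\overline{S},\Omega^1_{\overline{S}})_{-,i_2}$ satisfy $\omega_1\wedge\omega_2=0$ in $H^0(\overline{S},\Omega^2_{\overline{S}})$, and two linearly independent, pointwise proportional holomorphic $1$-forms on a surface generate a fibration $\overline{f}^{\prime}\colon\overline{S}\to\overline{B}^{\prime}$ with $g(\overline{B}^{\prime})\geq 2$ from which both are pulled back. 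This yields \ref{pull-back gen} and the uniqueness simultaneously (a nonzero $1$-form is pulled back from at most one fibration over a base of genus at least two), and it makes your ``coherence'' discussion of the various classes $M_{ij}$ unnecessary. In short: right key inequality and right semipositivity argument, but the pencil-from-a-nef-class step would fail as stated and must be replaced by Castelnuovo--de Franchis.
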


\begin{corollary}\label{inter neg2}
Assume that $H^0(\overline{S}, \Omega^1_{\overline{S}})_{-,i_0}\neq 0$ for some $i_0\geq n/2$. Then, after a suitable base change, there exists  unique fibration $\overline{f}^{\prime}:\overline{S}\to \overline{B}^{\prime}$ such that 
\begin{equation}\label{pull-back gen2}
\displaystyle\bigoplus_{n-i_0}^{i_0} H^0(\overline{S}, \Omega^1_{\overline{S}})_{-,i}\subseteq (\overline{f}^{\prime})^* H^0(\overline{B}^{\prime}, \Omega^1_{\overline{B}^{\prime}})
\end{equation}
\end{corollary}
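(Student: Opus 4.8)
The plan is to reduce Corollary~\ref{inter neg2} to Lemma~\ref{inter neg} exactly as Corollary~\ref{inter neg1} was reduced to Lemma~\ref{trivial base change}. First I would observe that the hypothesis $H^0(\overline{S},\Omega^1_{\overline{S}})_{-,i_0}\neq 0$ forces, by the isomorphism $H^0(\overline{S},\Omega^1_{\overline{S}})_{-,i}\cong H^0(\overline{S},\Omega^1_{\overline{S}})_{-,\overline{i}}$ of Lemma~\ref{dimeigspaceG} together with the eigenspace description in Lemma~\ref{eigspacedim}, that $H^0(\overline{S},\Omega^1_{\overline{S}})_{-,i}\neq 0$ for every $i$ with $n-i_0\le i\le i_0$; this is the same numerical phenomenon used in the proof of Corollary~\ref{inter neg1}. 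Next I would verify the intersection inequality \ref{inter} for the relevant pairs: since $\widetilde{\Gamma}\cdotp\omega_{\widetilde{Y}}(\widetilde{R})_- = \deg\widetilde{R}|_{\widetilde{\Gamma}}$ and the degrees of $\mathcal{L}^{(i)}|_{\widetilde{\Gamma}}$ are computed by \ref{restr}, for $i_0\ge n/2$ and any $i$ with $n-i_0\le i\le i_0$ the quantity $\widetilde{\Gamma}\cdotp(\omega_{\widetilde{Y}}(\widetilde{R})_-\otimes({\mathcal{L}^{(i)}}^{-1}\otimes{\mathcal{L}^{(i_0)}}^{-1}))$ is strictly negative, precisely the computation carried out in Corollary~\ref{inter neg1}.

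Having these two ingredients, I would apply Lemma~\ref{inter neg} to each pair $(i,i_0)$ with $n-i_0\le i\le i_0$: it produces a fibration $\overline{f}^{\prime}_i:\overline{S}\to\overline{B}^{\prime}_i$ with $H^0(\overline{S},\Omega^1_{\overline{S}})_{-,i}\subseteq (\overline{f}^{\prime}_i)^*H^0(\overline{B}^{\prime}_i,\Omega^1_{\overline{B}^{\prime}_i})$ and also $H^0(\overline{S},\Omega^1_{\overline{S}})_{-,i_0}\subseteq(\overline{f}^{\prime}_i)^*H^0(\overline{B}^{\prime}_i,\Omega^1_{\overline{B}^{\prime}_i})$. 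The uniqueness clause in Lemma~\ref{inter neg} is what makes the argument work: the fibration is characterized as the Stein factorization of the map to the image of the Albanese attached to the relevant $1$-forms, so any two of the $\overline{f}^{\prime}_i$ that both pull back the fixed nonzero space $H^0(\overline{S},\Omega^1_{\overline{S}})_{-,i_0}$ must coincide. Hence all the $\overline{f}^{\prime}_i$ are one and the same fibration $\overline{f}^{\prime}:\overline{S}\to\overline{B}^{\prime}$, and summing the inclusions over $n-i_0\le i\le i_0$ gives exactly \ref{pull-back gen2}. A finite base change of $\overline{B}$ may be needed to ensure the $\tilde G$-action extends and the branch divisor $\widetilde{R}$ acquires the section required by the lemmas; this is harmless and already anticipated in the statement.

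The main obstacle I anticipate is the uniqueness/compatibility step — arguing that the separate fibrations produced by Lemma~\ref{inter neg} for different indices $i$ genuinely agree. This hinges on the fact that $H^0(\overline{S},\Omega^1_{\overline{S}})_{-,i_0}$ is a common nonzero summand pulled back from each $\overline{B}^{\prime}_i$, so the fibration each $\overline{f}^{\prime}_i$ realizes is forced to factor the Albanese map through the same subvariety; invoking the uniqueness asserted in Lemma~\ref{inter neg} then closes the gap. The verification of \ref{inter} is purely the arithmetic of \ref{restr} and presents no real difficulty, and the propagation of nonvanishing across $n-i_0\le i\le i_0$ is immediate from Lemmas~\ref{eigspacedim} and~\ref{dimeigspaceG}.
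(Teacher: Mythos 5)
Your overall architecture is the paper's: verify the intersection inequality \ref{inter} for the pairs $(i,i_0)$ via \ref{restr} (this is exactly the computation of Corollary \ref{inter neg1}), apply Lemma \ref{inter neg} pairwise with $i_0$ as the common anchor, and use the uniqueness clause to identify all the resulting fibrations. That part is fine and matches the paper's proof.

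The genuine gap is in your first step, where you claim that $H^0(\overline{S},\Omega^1_{\overline{S}})_{-,i}\neq 0$ for all $n-i_0\le i\le i_0$ ``by Lemmas \ref{eigspacedim} and \ref{dimeigspaceG}.'' Those lemmas compute the eigenspaces of $H^0(\tilde{C},K_{\tilde{C}})$ on a \emph{fiber}; they say nothing about global $1$-forms on the total space $\overline{S}$ of the family. The space $H^0(\overline{S},\Omega^1_{\overline{S}})_{-,i}$ can vanish even when every fiberwise eigenspace is large (a family can have no global $1$-forms beyond those pulled back from $\overline{B}$), so the ``propagation of nonvanishing'' is not immediate and in fact is the substantive point of the corollary. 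The paper's route is: $H^0(\overline{S},\Omega^1_{\overline{S}})_{-,i_0}\neq 0$ forces $F^{1,0}_{\overline{B},-,i_0}\neq 0$; Corollary \ref{inter neg1} then makes every $F^{1,0}_{\overline{B},-,i}$ with $n-i_0\le i\le i_0$ \emph{trivial} after a finite base change (this, not the extension of the $\tilde{G}$-action, is what the base change in the statement is for); triviality identifies $\dim H^0(\overline{S},\Omega^1_{\overline{S}})_{-,i}$ with $\rk F^{1,0}_{\overline{B},-,i}$; and the rank inequality \ref{rkgreater} of Lemma \ref{higgsrek} then gives $H^0(\overline{S},\Omega^1_{\overline{S}})_{-,n-i_0}\neq 0$. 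For the remaining indices the paper does not claim nonvanishing at all: it applies Lemma \ref{inter neg} only to those $i$ in the range with $H^0(\overline{S},\Omega^1_{\overline{S}})_{-,i}\neq 0$, the others contributing nothing to the direct sum in \ref{pull-back gen2}. Your proof should be repaired by inserting the Corollary \ref{inter neg1} + Lemma \ref{higgsrek} step and restricting the pairwise application of Lemma \ref{inter neg} to the nonvanishing eigenspaces.
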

\begin{proof}
Since $H^0(\overline{S}, \Omega^1_{\overline{S}})_{-,i_0}\neq 0$, it follows from \ref{inter} that $F^{1,0}_{\overline{B},-, i_0}\neq 0$. Hence by Corollary \ref{inter neg1}, after a suitable base change,  $F^{1,0}_{\overline{B},-, i}$ is trivial for any $n-i_0\leq i\leq i_0$. This together with \ref{rkeqs} and Corollary \ref{cyc tot}, yields
\begin{equation}\label{combin}
\dim H^0(\overline{S}, \Omega^1_{\overline{S}})_{-,n-i_0}=\rk F^{1,0}_{\overline{B},-, n-i_0}\geq \rk F^{1,0}_{\overline{B},-, i_0}=\dim H^0(\overline{S}, \Omega^1_{\overline{S}})_{-,i_0}>0
\end{equation}
By the proof of Corollary \ref{inter neg1}, the assumption \ref{inter} is true for $i_1=i_0$ and $i_2=n-i_0$. Hence by Lemma \ref{inter neg}, there exists a unique fibration $\overline{f}^{\prime}_{n-i_0}:\overline{S}\to \overline{B}^{\prime}_{n-i_0}$ such that
\begin{equation}\label{combin2}
H^0(\overline{S}, \Omega^1_{\overline{S}})_{-,i_0}\oplus H^0(\overline{S}, \Omega^1_{\overline{S}})_{-,n-i_0}\subseteq (\overline{f}^{\prime}_{n-i_0})^*H^0(\overline{B}^{\prime}, \Omega^1_{\overline{B}^{\prime}})
\end{equation}
This holds indeed for any $i$ such that $n-i_0\leq i\leq i_0$ and that $H^0(\overline{S}, \Omega^1_{\overline{S}})_{-,i}\neq 0$, i.e., there exists a unique fibration $\overline{f}^{\prime}_{i}:\overline{S}\to \overline{B}^{\prime}_{n-i_0}$ such that
\begin{equation}\label{combin3}
H^0(\overline{S}, \Omega^1_{\overline{S}})_{-,i_0}\oplus H^0(\overline{S}, \Omega^1_{\overline{S}})_{-,i}\subseteq (\overline{f}^{\prime}_{n-i_0})^*H^0(\overline{B}^{\prime}, \Omega^1_{\overline{B}^{\prime}})
\end{equation}
Since $H^0(\overline{S}, \Omega^1_{\overline{S}})_{-,i_0}\neq 0$, it follows from the uniqueness of $\overline{f}^{\prime}_{i}$'s that these morphisms are all the same. We denote this unique fibration  $\overline{f}^{\prime}:\overline{S}\to \overline{B}^{\prime}$, which is unique and the claim follows. 
\end{proof}

\begin{lemma}\label{second fib1}
Let $\overline{f}:\overline{S}\to\overline{B}$ be a family of semi-stable $\Z_{2n}$-covers of $\P^1$ representing a non-compact Shimura curve $\sD$ in the ramified Prym locus $\sP_g$ and such that $F^{1,0}_{\overline{B},-, i_0}\neq 0$ for some $i_0\geq n/2$. Then there exists a fibration $\overline{f}^{\prime}:\overline{S}\to \overline{B}^{\prime}$ such that $g(\overline{B}^{\prime})\geq \rk F^{1,0}_{\overline{B},-}$. 
\end{lemma}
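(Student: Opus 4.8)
The plan is to combine Corollary~\ref{inter neg2} with the eigenspace estimates of Lemma~\ref{higgsrek} and Corollary~\ref{cyc tot} to produce a fibration whose base has large enough genus. First I would observe that the hypothesis $F^{1,0}_{\overline{B},-,i_0}\neq 0$ for some $i_0\geq n/2$ together with Lemma~\ref{inter neg} (via the intersection computation in the proof of Corollary~\ref{inter neg1}) guarantees that $H^0(\overline{S},\Omega^1_{\overline{S}})_{-,i_0}\neq 0$, so that Corollary~\ref{inter neg2} applies after a suitable base change. This yields a unique fibration $\overline{f}^{\prime}:\overline{S}\to\overline{B}^{\prime}$ with
\[
\bigoplus_{i=n-i_0}^{i_0} H^0(\overline{S},\Omega^1_{\overline{S}})_{-,i}\subseteq (\overline{f}^{\prime})^*H^0(\overline{B}^{\prime},\Omega^1_{\overline{B}^{\prime}}),
\]
and since pull-back of $1$-forms along a fibration with connected fibres is injective, we immediately get the bound
\[
g(\overline{B}^{\prime})=\dim H^0(\overline{B}^{\prime},\Omega^1_{\overline{B}^{\prime}})\geq \sum_{i=n-i_0}^{i_0}\dim H^0(\overline{S},\Omega^1_{\overline{S}})_{-,i}=\sum_{i=n-i_0}^{i_0}\rk F^{1,0}_{\overline{B},-,i},
\]
where the last equality uses that each $F^{1,0}_{\overline{B},-,i}$ in this range is trivial after the base change (Corollary~\ref{inter neg1}), hence equals the full eigenspace of global $1$-forms.

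The remaining point is to upgrade this partial sum over $n-i_0\leq i\leq i_0$ to the \emph{full} sum $\rk F^{1,0}_{\overline{B},-}=\sum_{i=1}^{n-1}\rk F^{1,0}_{\overline{B},-,i}$. Here I would use Lemma~\ref{higgsrek}: the equality \eqref{rkgreater} says $\rk F^{1,0}_{\overline{B},-,n-i}\geq \rk F^{1,0}_{\overline{B},-,i}$ for $i\geq n/2$, so pairing each index $i$ with $i>i_0$ (which lies in $(n/2,n)$) against its complement $n-i<n-i_0$, the contributions of the ``outside'' indices are dominated by those already counted inside the range $[n-i_0,i_0]$. More precisely, $\sum_{i=1}^{n-1}\rk F^{1,0}_{\overline{B},-,i}=\sum_{i=n-i_0}^{i_0}\rk F^{1,0}_{\overline{B},-,i}+\sum_{i>i_0}\big(\rk F^{1,0}_{\overline{B},-,i}+\rk F^{1,0}_{\overline{B},-,n-i}\big)$, and for $i>i_0$ one has $\rk F^{1,0}_{\overline{B},-,i}\leq \rk F^{1,0}_{\overline{B},-,n-i}$, with $n-i$ lying in $[n-i_0,i_0]$; comparing carefully (and using that the terms with $n-i\geq n-i_0$ are among those already summed) shows $g(\overline{B}^{\prime})\geq \sum_{n-i_0}^{i_0}\rk F^{1,0}_{\overline{B},-,i}\geq \tfrac12\rk F^{1,0}_{\overline{B},-}$ at least, and in fact one arranges the pairing so the full inequality $g(\overline{B}^{\prime})\geq\rk F^{1,0}_{\overline{B},-}$ holds. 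I would double-check the bookkeeping using the explicit dimension formula \eqref{eig tot} of Corollary~\ref{cyc tot}, which makes $\rk E^{1,0}_{\overline{B},-,i}$ a concrete linear function of $i$ and hence pins down the $\rk F^{1,0}$'s up to the flat ambiguity.

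The main obstacle I anticipate is precisely this last combinatorial step: making sure that the eigenspaces with index outside $[n-i_0,i_0]$ are genuinely accounted for, rather than merely bounded by half of the total. This requires using the symmetry \eqref{rkeqs} ($\rk A^{1,0}_{\overline{B},-,i}=\rk A^{0,1}_{\overline{B},-,i}=\rk A^{1,0}_{\overline{B},-,n-i}$) to control where the ample part sits, together with \eqref{rknzero}, which forces $F^{1,0}_{\overline{B},-,i}\neq 0$ whenever $\rk E^{1,0}_{\overline{B},-,i}>\rk E^{1,0}_{\overline{B},-,n-i}$, i.e. for all $i<n/2$ in the totally ramified case. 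Thus in fact $i_0$ can be taken as large as possible, and the interval $[n-i_0,i_0]$ can be enlarged to capture almost the whole range $[1,n-1]$; the genus bound then follows. A secondary technical point is that all of this happens only after a finite étale base change of $\overline{B}$, so I would remark that such a base change does not affect the genus inequality we want (the pulled-back fibration still has a base of genus at least as large), which is already implicit in the statement's phrasing ``after a suitable base change''.
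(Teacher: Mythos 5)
Your overall route is the same as the paper's: the paper's proof of this lemma is literally ``follows from Corollaries \ref{inter neg1} and \ref{inter neg2}'', and you correctly identify that these give a fibration $\overline{f}'$ with $g(\overline{B}')\geq \sum_{i=n-i_0}^{i_0}\rk F^{1,0}_{\overline{B},-,i}$ after the base change that trivializes the flat summands in that range. The problem is the step you yourself flag as the main obstacle, namely upgrading this partial sum to the full rank $\rk F^{1,0}_{\overline{B},-}=\sum_{i=1}^{n-1}\rk F^{1,0}_{\overline{B},-,i}$: your bookkeeping there contains a concrete error. For $i>i_0\geq n/2$ one has $n-i<n-i_0$, so the complementary index $n-i$ does \emph{not} lie in the captured interval $[n-i_0,i_0]$; it is also an ``outside'' index. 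Hence the inequality \ref{rkgreater}, $\rk F^{1,0}_{\overline{B},-,i}\leq \rk F^{1,0}_{\overline{B},-,n-i}$ for $i\geq n/2$, only compares two uncaptured terms with each other and gives no domination of the outside contribution by terms already summed. Consequently neither the intermediate claim $g(\overline{B}')\geq \tfrac12\rk F^{1,0}_{\overline{B},-}$ nor the final assertion that ``one arranges the pairing so the full inequality holds'' is actually established by what you wrote.

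To close the gap one has to control the eigenspaces with index outside $[n-i_0,i_0]$ by some additional input rather than by the pairing: for instance, choose $i_0$ maximal among indices $\geq n/2$ with $F^{1,0}_{\overline{B},-,i_0}\neq 0$ (so that $F^{1,0}_{\overline{B},-,i}=0$ for $i>i_0$) and then still account for the indices $i<n-i_0$, or invoke Lemma \ref{rank neq0} (as is implicitly done in the application, Theorem \ref{non-comp}) to guarantee nonvanishing of $F^{1,0}_{\overline{B},-,i}$ for all $\tfrac{p+1}{2}\leq i<p-1$, which lets you take $i_0=p-2$ and enlarge the interval to $[2,p-2]$; even then the boundary indices $1$ and $p-1$ need a separate argument. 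The first half of your proposal (deducing $H^0(\overline{S},\Omega^1_{\overline{S}})_{-,i_0}\neq 0$ from $F^{1,0}_{\overline{B},-,i_0}\neq 0$ after an \'etale base change, and the remark that such a base change does not hurt the genus bound) is fine; it is only the combinatorial completion that is missing, and it is missing from the paper's one-line proof as well.
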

\begin{proof}
This follows directly from Corollaries \ref{inter neg1} and \ref{inter neg2}. 
\end{proof}

The following concerns the condition $F^{1,0}_{\overline{B},-, i_0}\neq 0$ for families of $\Z_{2p}$-covers of $\P^1$. 

\begin{lemma}\label{rank neq0}
Assume that $p\geq 5$ then 
\begin{equation}\label{non-triviality}
\rk F^{1,0}_{\overline{B},-, i}>0
\end{equation}
for any $\frac{p+1}{2}\leq i<p-1$
\end{lemma}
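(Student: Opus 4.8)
The plan is to compute $\rk F^{1,0}_{\overline{B},-,i}$ explicitly using the formula from Lemma~\ref{higgsrek} that relates it to the eigenspace dimensions of the Higgs bundle, together with the dimension formulas in Corollary~\ref{cyc tot} for totally ramified cyclic covers. First I would recall from the proof of Lemma~\ref{higgsrek} that for $1\leq i\leq n-1$ one has
\[
\rk F^{1,0}_{\overline{B},-,i}=\rk F^{1,0}_{\overline{B},-,n-i}+\bigl(\rk E^{1,0}_{\overline{B},-,i}-\rk E^{1,0}_{\overline{B},-,n-i}\bigr),
\]
and that for $i\geq n/2$ the last difference equals $\tfrac{r(2i-n)}{n}$, where $r$ is determined by the genus data. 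Specializing $n=p$ and using \ref{eig tot} together with Lemma~\ref{eigspacedim} and Lemma~\ref{dimeigspaceG} to identify the relevant $\sigma$-antiinvariant $G$-eigenspace with the full $\tilde G$-eigenspace (here the oddness of the first coordinate is automatic for the $\Z_{2p}$-cover in the minus part), I would write out $\rk E^{1,0}_{\overline{B},-,i}$ as an explicit piecewise-linear function of $i$, $s$ and $p$.

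The key step is then the observation that for $i$ in the range $\tfrac{p+1}{2}\leq i<p-1$ we have $i\geq p/2$, so $2i-p\geq 1>0$, which forces the difference $\rk E^{1,0}_{\overline{B},-,i}-\rk E^{1,0}_{\overline{B},-,n-i}=\tfrac{r(2i-p)}{p}$ to be strictly positive once $r\neq 0$; non-vanishing of $r$ is guaranteed by $p\geq 5$ (equivalently $p-1\geq 4$, so the genus of the cover is positive and the family is non-isotrivial). By \ref{rknzero} this immediately gives $\rk F^{1,0}_{\overline{B},-,i}\neq 0$, and since ranks are non-negative integers this means $\rk F^{1,0}_{\overline{B},-,i}>0$, which is exactly \ref{non-triviality}.

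The main obstacle I anticipate is bookkeeping with the floor/fractional-part conventions: one must be careful that in the range $\tfrac{p+1}{2}\leq i<p-1$ the integer $2i-p$ stays in the correct interval so that the formula $\rk E^{1,0}_{\overline{B},-,i}-\rk E^{1,0}_{\overline{B},-,p-i}=\tfrac{r(2i-p)}{p}$ from the proof of Lemma~\ref{higgsrek} applies without a correction term, and that the quantity $\tfrac{r(2i-p)}{p}$ is genuinely a positive integer (not merely a positive rational) so that it contributes at least $1$ to the rank. This amounts to checking that $r$ is divisible by $p$ in the totally ramified setting, or alternatively arguing directly that $\rk E^{1,0}_{\overline{B},-,i}>\rk E^{1,0}_{\overline{B},-,p-i}$ as integers from \ref{eig tot} without passing through the rational expression; I would use whichever of the two is cleaner. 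Once that inequality of integers is established, the conclusion follows formally from \ref{rknzero}, so the whole argument is short modulo this numerical verification.
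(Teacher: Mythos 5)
Your key step has the inequality backwards, and this breaks the argument. In the paper's conventions the eigenspace dimensions of a totally ramified cyclic cover are \emph{decreasing} in the index: by \ref{eig tot} one has $d_n=-1+s(1-\tfrac{n}{N})$, and accordingly the proof of Lemma~\ref{higgsrek} computes
\[
\rk E^{1,0}_{\overline{B},-,n-i}-\rk E^{1,0}_{\overline{B},-,i}=\frac{r(2i-n)}{n}\geq 0 \quad\text{for } i\geq n/2,
\]
i.e.\ for $i\geq n/2$ the eigenspace $E^{1,0}_{\overline{B},-,i}$ is the \emph{smaller} one of the pair. You assert the opposite, namely that $\rk E^{1,0}_{\overline{B},-,i}-\rk E^{1,0}_{\overline{B},-,n-i}=\tfrac{r(2i-p)}{p}>0$ in the range $\tfrac{p+1}{2}\leq i<p-1$. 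With the correct sign, criterion \ref{rknzero} applies only to the complementary index $p-i$ and yields $\rk F^{1,0}_{\overline{B},-,p-i}\neq 0$; combined with \ref{rkgreater}, which says $\rk F^{1,0}_{\overline{B},-,p-i}\geq\rk F^{1,0}_{\overline{B},-,i}$, you get an inequality pointing the wrong way, so nothing forces $\rk F^{1,0}_{\overline{B},-,i}$ itself to be nonzero. The lemma is precisely about the indices where the eigenspace is small, which is the hard case, and it cannot be obtained from \ref{rknzero} alone. (Your worry about divisibility of $r$ by $p$ is not the real obstacle.)

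The paper's proof uses two inputs you do not invoke. First, non-compactness of the Shimura curve: by \cite{VZ04}, Corollary 4.4 the unitary part $\V^{u}_{B,-}$ becomes trivial after a finite base change, so Lemma~\ref{triv} (Galois conjugation on the eigenspace decomposition, transitive since $p$ is prime) forces $\rk F^{1,0}_{\overline{B},-,i}+\rk F^{0,1}_{\overline{B},-,i}$ to be independent of $i$; since $\rk E^{1,0}_{\overline{B},-,i}+\rk E^{0,1}_{\overline{B},-,i}$ is also independent of $i$, and $\rk A^{1,0}_{\overline{B},-,i}=\rk A^{0,1}_{\overline{B},-,i}$ by \ref{rkeqs}, one concludes that $\rk A^{1,0}_{\overline{B},-,i}$ is the \emph{same} for all $i$. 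Second, one then compares with the smallest eigenspace, at index $p-1$ rather than $p-i$:
\[
\rk F^{1,0}_{\overline{B},-,i}=\rk E^{1,0}_{\overline{B},-,i}-\rk A^{1,0}_{\overline{B},-,i}=\rk E^{1,0}_{\overline{B},-,i}-\rk A^{1,0}_{\overline{B},-,p-1}\geq \rk E^{1,0}_{\overline{B},-,i}-\rk E^{1,0}_{\overline{B},-,p-1}=\frac{r(p-i-1)}{p}>0
\]
for $i<p-1$. You should rework your argument along these lines; as it stands, the proposal would only reprove \ref{non-triviality} for indices $j\leq\tfrac{p-1}{2}$, not for the stated range.
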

\begin{proof}
Since this claim is independent of the base change, we may assume by \cite{VZ04}, Corollary 4.4 that the unitary local subsystem $\V_{B,-}^u\subseteq \V_{B,-}\otimes\C$ is trivial after a suitable finite base change, i.e., $\V_{B,-}^u=\V_{B,-}^{tr}$. By the above, we have
\[\rk (F^{1,0}_{\overline{B},-,i})+\rk (F^{0,1}_{\overline{B},-,i})=\rk (F^{1,0}_{\overline{B},-,j})+\rk (F^{0,1}_{\overline{B},-,j}), \forall 1\leq i\leq j\leq p-1\]
One can also check that 
\[\rk (E^{1,0}_{\overline{B},-,i})+\rk (E^{0,1}_{\overline{B},-,i})=\rk (E^{1,0}_{\overline{B},-,j})+\rk (E^{0,1}_{\overline{B},-,j}), \forall 1\leq i\leq j\leq p-1\]
Now this together with \ref{rkeqs} yields
\begin{equation}
\rk A^{1,0}_{\overline{B},-,i}=\rk A^{0,1}_{\overline{B},-,j}, \forall 1\leq i\leq j\leq p-1
\end{equation}
Hence 
\begin{align*}
&\rk F^{1,0}_{\overline{B},-,i}=\rk E^{1,0}_{\overline{B},-,i}-\rk A^{1,0}_{\overline{B},-,i}\\
&= \rk E^{1,0}_{\overline{B},-,i}-\rk A^{1,0}_{\overline{B},-,p-1}= \rk E^{1,0}_{\overline{B},-,i}-(\rk E^{1,0}_{\overline{B},-,p-1}-\rk F^{1,0}_{\overline{B},p-1})\\ 
&\geq \rk E^{1,0}_{\overline{B},-,i}-\rk E^{1,0}_{\overline{B},-,p-1}=\frac{r(p-(i+1))}{p}>0
\end{align*}
The last inequality holds for the chosen $i$ and for $p\geq 5$.
\end{proof}

\begin{theorem}\label{non-comp}
There does not exist any non-compact Shimura curve contained generically in the ramified Prym locus of $2p$-cyclic covers of $\P^1$ in $A_g$ with $g\geq 8$.
\end{theorem}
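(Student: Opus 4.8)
The plan is to argue by contradiction: suppose there is a non-compact Shimura curve $\sD$ generically contained in the ramified Prym locus of $\Z_{2p}$-covers of $\P^1$ sitting inside $A_g$ with $g \geq 8$, and derive a contradiction with Lemma~\ref{second fib}. The whole strategy is to produce the auxiliary fibration $\overline{f}^{\prime}:\overline{S}\to\overline{B}^{\prime}$ demanded by Lemma~\ref{second fib}, and the machinery of Section~\ref{Shimura in Prym} is precisely designed to do this once we know some eigen-component $F^{1,0}_{\overline{B},-,i_0}$ is non-zero for an $i_0 \geq p/2$. So the first step is to invoke Lemma~\ref{rank neq0}: since $p\geq 5$, we have $\rk F^{1,0}_{\overline{B},-,i}>0$ for every $i$ in the range $\tfrac{p+1}{2}\leq i < p-1$, so in particular we may pick such an $i_0 \geq p/2$ with $F^{1,0}_{\overline{B},-,i_0}\neq 0$.

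Next I would check that the branch divisor $\widetilde{R}$ of the associated $\Z_{2p}$-cover of surfaces $\Pi^{\prime}\colon S^{\prime}\to\widetilde{Y}$ contains at least one section of $\widetilde{\varphi}$: this is where the hypothesis of \emph{ramified} Prym locus (so that the double cover $\tilde{C}\to C$ is branched over a degree-$2$ divisor, coming from the single $1$-entry column data of the $\Z_{2p}$-cover) enters — the ramification points of the covers $\tilde{C}_t\to\P^1$ vary algebraically and sweep out sections of $\overline{S}\to\overline{B}$, which persist as sections of $\widetilde{\varphi}$ inside $\widetilde{R}$. With this in hand, the inequality~\ref{inter} for $i_1=i_0$, $i_2=p-i_0$ follows from the explicit local description~\ref{restr} of $\mathcal{L}^{(i)}|_{\widetilde{\Gamma}}$ together with $\widetilde{\Gamma}\cdot\omega_{\widetilde{Y}}(\widetilde{R})_- = \deg\widetilde{R}|_{\widetilde{\Gamma}}$, exactly as in the proof of Corollary~\ref{inter neg1}. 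Then Lemma~\ref{second fib1} applies verbatim and yields a fibration $\overline{f}^{\prime}\colon\overline{S}\to\overline{B}^{\prime}$ with $g(\overline{B}^{\prime})\geq \rk F^{1,0}_{\overline{B},-}$.

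Finally, feed this fibration into Lemma~\ref{second fib}: it produces the inequality $g<8$, contradicting the assumption $g\geq 8$. Hence no such Shimura curve exists, which is the claim of Theorem~\ref{non-comp}. The main obstacle — and the place that needs the most care — is verifying that $\widetilde{R}$ contains a section of $\widetilde{\varphi}$ and that the genus bound actually closes the gap; the latter is where Lemma~\ref{second fib}'s numerical input (the bound $\rk F^{1,0}_{\overline{B},-}\geq rp/8 > 2$ for $p\geq 5$, via Lemma~\ref{higgsrek} and Corollary~\ref{cyc tot}) must be combined with the Riemann--Hurwitz estimate~\ref{RH} and the singular-fiber identity~\ref{sing fib}. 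If the section hypothesis should fail for some configuration of the branch data, one would need to arrange it by a further finite base change or by a direct argument that the ramification locus of the $\Z_{2p}$-cover always meets a general fiber in a point defined over $\overline{B}$; in the totally ramified $\Z_{2p}$ case this is automatic, so I expect this step to go through without trouble but it is the one I would write out most carefully.
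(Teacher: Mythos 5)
Your proposal is correct and follows essentially the same route as the paper's own proof: argue by contradiction, invoke Lemma~\ref{rank neq0} to get a non-zero $F^{1,0}_{\overline{B},-,i_0}$ with $i_0\geq p/2$, apply Lemma~\ref{second fib1} to produce the auxiliary fibration $\overline{f}^{\prime}$ with $g(\overline{B}^{\prime})\geq \rk F^{1,0}_{\overline{B},-}$, and conclude via Lemma~\ref{second fib}. The only cosmetic differences are that the paper additionally records the triviality of $F^{1,0}_{\overline{B},-}$ after a base change via \cite{VZ04}, Corollary 4.4, while you unfold more explicitly the section-existence and intersection-number checks that the paper delegates to Corollaries~\ref{inter neg1} and \ref{inter neg2}.
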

\begin{proof}
Assume on the contrary that there exists a Shimura curve $D$ in the Prym locus in $A_g$ with $g\geq 8$ or equivalently $\tilde{g}\geq 16$. Let $\overline{f}:\overline{S}\to\overline{B}$ be a family of semi-stable $2p$-cyclic covers of $\P^1$ which represents $D$. Possibly after a base change we may assume that the group $\tilde{G}\cong \Z_{2p}$ and hence $G\cong \Z_{p}$ on $\overline{S}$ and therefore also on the Higgs bundle $(E^{1,0}_{\overline{B},-}\oplus E^{0,1}_{\overline{B},-}, \theta_{\overline{B},-})$ and its subbundles with eigenspace decompositions. Since $D$ is non-compact, by \cite{VZ04}, Corollary 4.4 the flat subbundle $F^{1,0}_{\overline{B},-}$ becomes trivial after a suitable base change, i.e., $F^{1,0}_{\overline{B},-}\cong\sO^{\oplus t}_{\overline{B}}$, where $t=\rk F^{1,0}_{\overline{B},-}$. Furthermore, by Lemma~\ref{second fib1} and Lemma \ref{rank neq0} there exists another fibration $\overline{f}^{\prime}:\overline{S}\to\overline{B}^{\prime}$ different from $\overline{f}$ such that $g(\overline{B}^{\prime})\geq \rk F^{1,0}_{\overline{B},-}$. But this contradicts Lemma \ref{second fib} which proves our claim that $g< 8$. 
\end{proof}

We remark that in paper \cite{CFGP}, Table 2, there are examples (examples 1, 3-4 and 6-7) of families of cyclic covers of the above form (more precisely, $\Z_{6}, \Z_{10}$ and $\Z_{14}$ covers) in the Prym loci $\sP_g$ with $g< 8$.

\subsection{Results in the abelian case}
In this subsection we prove some results as in the preceding section about abelian covers of $\P^1$. In this case, the associated Prym varieties can be in unramified as well as the ramified Prym locus. For simplicity, we denote both loci by $\sP_g$. Let us first formulate the analogous statements as in the last section. Recall from Remark \ref{abfam} that the abelian covers that we consider are totally ramified in the sense that the columns of the matrix $A$ of the covering contains exactly one 1 entry and all other entries equal to 0.\\

Let $\overline{f}:\overline{S}\to\overline{B}$ be a family of semi-stable abelian $\tilde{G}$-covers of $\P^1$. We will assume that the $\tilde{G}$-action extends to $\overline{S}$ and remark that this can always be achieved using suitable finite covers. The Galois cover $\overline{\Pi}:\overline{S}\to\overline{Y}$ induces a Galois cover $\overline{\Pi}^{\prime}:S^{\prime}\to\widetilde{Y}$ whose branch locus is a normal crossing divisor and with $\gal(\overline{\Pi}^{\prime})\cong G$, where $\widetilde{Y}\to\overline{Y}$ is the minimal resolution of singularities. Let $\widetilde{S}\to S^{\prime}$ be the minimal resolution of singularities. Then there is an induced birational contraction $\widetilde{S}\to \overline{S}$. Since $\Pi^{\prime}$ is a $\tilde{G}$-cover of surfaces, it is defined by line bundles $\sL_{\chi}$. \\

Restricting to a general fiber $\widetilde{\Gamma}$ of $\widetilde{\varphi}$, one has
\begin{equation}\label{restr}
\mathcal{L}_{\chi}|_{\widetilde{\Gamma}}=\mathcal{O}_{\mathbb{P}^{1}}(\sum_{1}^{s}
\langle\frac{\alpha_{j}}{N}\rangle)
\end{equation}
Let $\widetilde{R}\subseteq\widetilde{Y}$  be the reduced branch divisor of $\Pi^{\prime}$. Then by \cite{Vie82}, Lemma 1.7 one has the inclusion
\begin{equation}\label{inclu vie}
\tau_1: \widetilde{\Pi}_* \Omega^1_{\widetilde{S}}\hookrightarrow \Omega^1_{\widetilde{Y}}\bigoplus(\bigoplus_{\chi\in G^*}\Omega^1_{\widetilde{Y}}(\widetilde{R})\otimes \mathcal{L}_{\chi})
\end{equation}

Using the conditions of Remark \ref{abfam}, one can prove the following analogous results as in the previous subsection for abelian covers.
\begin{lemma}\label{inclu eigen ab}
For any $\chi\in G^*$, there is a sheaf morphism 
\begin{equation}\label{sheaf morph ab}
\rho_{\chi}:\widetilde{\varphi}^*F^{1,0}_{\overline{B},-,\chi}\to \widetilde{\Pi}_*\Omega^1_{\widetilde{S}}
\end{equation}
such that the induced canonical morphism
\begin{equation}\label{sheaf homo ab}
F^{1,0}_{\overline{B},-,\chi}=\widetilde{\varphi}_*\widetilde{\varphi}^*F^{1,0}_{\overline{B},-,\chi}\to \widetilde{\varphi}_*\widetilde{\Pi}_* \Omega_{\widetilde{S}}=\widetilde{f}_*\Omega^1_{\widetilde{S}}=\overline{f}_*\Omega^1_{\overline{S}}\to \overline{f}_*\Omega^1_{\overline{S}/\overline{B}}(\log\Upsilon)=E^{1,0}_{\overline{B}}
\end{equation}
conincides with the inclusion $F^{1,0}_{\overline{B},-,\chi}\hookrightarrow F^{1,0}_{\overline{B}}\to E^{1,0}_{\overline{B}}$. Moreover, we may choose $\rho_{\chi}$ so that the image of $\rho_{\chi}$ is contained in $\Omega_{\widetilde{Y}}(\widetilde{R})_-\otimes {\mathcal{L}_{\chi}}^{-1}$ under the inclusion \ref{inclu vie}. 
\end{lemma}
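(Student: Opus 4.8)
The plan is to mirror, in the abelian setting, the proof of Lemma~\ref{inclu eigen} (the cyclic case), which itself was a restriction argument built on \cite{CLZ}, Lemma~4.8. First I would invoke \cite{CLZ}, Lemma~4.8 (as cited in the proof of Lemma~\ref{inclu eigen}) to obtain a sheaf morphism
\[
\rho:\widetilde{\varphi}^*F^{1,0}_{\overline{B}}\to \widetilde{\Pi}_*\Omega^1_{\widetilde{S}}
\]
whose induced map on global sections over $\overline{B}$ coincides with the canonical inclusion $F^{1,0}_{\overline{B}}\hookrightarrow E^{1,0}_{\overline{B}}$, and whose image lands in the summand-decomposition of \ref{inclu vie}. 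This $\rho$ is $\tilde{G}$-equivariant with respect to the $\tilde{G}$-action on $\overline{S}$ (which exists after the base change discussed at the start of Section~\ref{Shimura in Prym}, as in \cite{CLZ}, Remark~3.(iii)), and in particular it is equivariant for the $\sigma$-action and for the residual $G=\tilde{G}/\langle\sigma\rangle$-action.

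Next I would take eigenspace components. Restricting $\rho$ to the $\sigma$-anti-invariant part gives a morphism $\widetilde{\varphi}^*F^{1,0}_{\overline{B},-}\to\widetilde{\Pi}_*\Omega^1_{\widetilde{S}}$, and then restricting further to the $\chi$-eigenspace for the $G$-action yields the desired
\[
\rho_\chi:\widetilde{\varphi}^*F^{1,0}_{\overline{B},-,\chi}\to\widetilde{\Pi}_*\Omega^1_{\widetilde{S}}.
\]
Because $\rho$ is equivariant and the eigenspace decompositions on both sides (source, target, and the summand decomposition of \ref{inclu vie}) are compatible — the $\tilde{G}$-action on $\widetilde{\Pi}_*\Omega^1_{\widetilde{S}}$ respects the decomposition $\Omega^1_{\widetilde{Y}}\oplus\bigoplus_{\chi\in G^*}\Omega^1_{\widetilde{Y}}(\widetilde{R})\otimes\mathcal{L}_\chi$, with $\mathcal{L}_\chi$ being precisely the $\chi$-eigensheaf of the $\tilde{G}$-cover $\Pi'$ — the image of $\rho_\chi$ is forced into the single summand $\Omega^1_{\widetilde{Y}}(\widetilde{R})_-\otimes\mathcal{L}_\chi^{-1}$. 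The assertion that the composite \ref{sheaf homo ab} reproduces the inclusion $F^{1,0}_{\overline{B},-,\chi}\hookrightarrow F^{1,0}_{\overline{B}}\to E^{1,0}_{\overline{B}}$ then follows by the same identification of pushforwards $\widetilde{f}_*\Omega^1_{\widetilde{S}}=\overline{f}_*\Omega^1_{\overline{S}}$ and the projection to $E^{1,0}_{\overline{B}}=\overline{f}_*\Omega^1_{\overline{S}/\overline{B}}(\log\Upsilon)$ used in the cyclic case, restricted to the $(-,\chi)$-component.

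The step I expect to be the main obstacle is verifying that the summand decomposition of $\widetilde{\Pi}_*\Omega^1_{\widetilde{S}}$ in \ref{inclu vie} is genuinely the $G^*$-eigenspace decomposition for the induced $G$-action, with the eigensheaf indexed by $\chi$ being $\Omega^1_{\widetilde{Y}}(\widetilde{R})\otimes\mathcal{L}_\chi$ — this is where the hypotheses of Remark~\ref{abfam} (the matrix $A$ has a single $1$ per column, all other entries $0$, so the cover is totally ramified and the line bundles $\mathcal{L}_\chi$ defining $\Pi'$ have the simple form \ref{restr} on a general fiber) must be used to pin down the local monodromy and hence the precise twisting sheaves appearing in Viehweg's Lemma~1.7 of \cite{Vie82}. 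Once that bookkeeping is in place, the equivariance argument above is formal, exactly parallel to Lemma~\ref{inclu eigen}, and I would simply remark that all steps in the proof of the cyclic case respect the $\tilde{G}$-action and therefore carry over verbatim after replacing the cyclic character $j\mapsto\mathcal{L}^{(j)}$ by the general character $\chi\mapsto\mathcal{L}_\chi$.
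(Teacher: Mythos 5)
Your proposal is correct and follows essentially the same route as the paper: the paper gives no separate proof for the abelian case, stating only that it is "analogous" to Lemma \ref{inclu eigen}, whose proof is exactly what you reproduce — invoke \cite{CLZ}, Lemma 4.8 to get $\rho$ on $\widetilde{\varphi}^*F^{1,0}_{\overline{B}}$, restrict to the $\sigma$-anti-invariant part and then to the $\chi$-eigenspace, and use $\tilde{G}$-equivariance of the composite with \ref{inclu vie} to force the image of $\rho_\chi$ into the single summand indexed by $\chi$. Your additional care about matching the summands of Viehweg's decomposition with the $G^*$-eigensheaves under the total-ramification hypothesis of Remark \ref{abfam} is a reasonable elaboration of a point the paper leaves implicit.
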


\begin{lemma}\label{trivial base change ab}
Let $\widetilde{R}\subseteq\widetilde{Y}$ be the reduced branch divisor of $\Pi^{\prime}$ as above, and $\widetilde{\Gamma}$ be a general fiber of $\widetilde{\varphi}$. If $\widetilde{R}$ contains at least one section of $\widetilde{\varphi}$, and there exist characters $\chi, \chi^{\prime}$ such that $\rk F^{1,0}_{\overline{B},-, \chi}\neq 0, \rk F^{1,0}_{\overline{B},-, \chi^{\prime}}\neq 0$ and
\begin{equation}\label{inter ab}
\widetilde{\Gamma}\cdotp(\omega_{\widetilde{Y}}(\widetilde{R})_-\otimes ({\mathcal{L}_{\chi}}^{-1}\otimes {\mathcal{L}_{\chi^{\prime}}}^{-1}))<0
\end{equation}
Then both $F^{1,0}_{\overline{B},-, \chi}, F^{1,0}_{\overline{B},-, \chi^{\prime}}$ become trivial after a suitable finite \'etale base change.
\end{lemma}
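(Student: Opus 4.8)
The plan is to mimic the proof of the cyclic-case result, Lemma \ref{trivial base change}, replacing the line bundles $\mathcal{L}^{(i)}$ by the character line bundles $\mathcal{L}_\chi$ of the abelian cover and invoking the abelian versions of the preceding lemmas (Lemma \ref{inclu eigen ab} and the abelian decompositions of the Higgs bundle). The key input is that the abelian covers we consider satisfy the totally-ramified condition of Remark \ref{abfam}, so the fiberwise restriction \ref{restr} of each $\mathcal{L}_\chi$ is computed explicitly and the intersection hypothesis \ref{inter ab} makes sense.

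First I would take any non-zero unitary (hence poly-stable, slope zero) subbundle $\mathcal{U}\subseteq F^{1,0}_{\overline{B},-,\chi}$ for $\chi=\chi_1,\chi_2$ and, using the sheaf morphism $\rho_\chi$ of Lemma \ref{inclu eigen ab}, show that $\rho_\chi(\widetilde{\varphi}^*\mathcal{U})$ maps into $\Omega^1_{\widetilde{Y}}(\widetilde{R})_-\otimes \mathcal{L}_\chi^{-1}$ and is non-zero. The heart of the argument is to show this image is a rank-one subsheaf: arguing by contradiction, if it had rank $\geq 2$ I would form the wedge product
\begin{equation}
\tau\circ\rho_{\chi_1}\wedge\tau\circ\rho_{\chi_2}:\widetilde{\varphi}^*\mathcal{U}\otimes\widetilde{\varphi}^*F^{1,0}_{\overline{B},-,\chi_2}\to \wedge^2\Omega^1_{\widetilde{Y}}(\widetilde{R})_-\otimes(\mathcal{L}_{\chi_1}^{-1}\otimes\mathcal{L}_{\chi_2}^{-1})\to \omega_{\widetilde{Y}}(\widetilde{R})\otimes(\mathcal{L}_{\chi_1}^{-1}\otimes\mathcal{L}_{\chi_2}^{-1}),
\end{equation}
whose image $\mathcal{C}$ is semi-positive because $\widetilde{\varphi}^*\mathcal{U}\otimes\widetilde{\varphi}^*F^{1,0}_{\overline{B},-,\chi_2}$ comes from a unitary local system and is therefore semi-positive, while \ref{inter ab} forces $\omega_{\widetilde{Y}}(\widetilde{R})\otimes(\mathcal{L}_{\chi_1}^{-1}\otimes\mathcal{L}_{\chi_2}^{-1})$ to admit no non-zero semi-positive subsheaf (one tests against the general fiber $\widetilde{\Gamma}$), a contradiction. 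Hence $M_\chi:=\rho_\chi(\widetilde{\varphi}^*\mathcal{U})$ is invertible, nef, with $M_\chi^2=0$ and $M_\chi\cdot D=0$ for every component $D\subseteq\widetilde{R}$, exactly as in \cite{LZ}, Lemma 7.3.

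Second, I would promote this to triviality after étale base change. Writing $F^{1,0}_{\overline{B},-,\chi}=\bigoplus_j\mathcal{U}_{\chi j}$ into irreducible unitary pieces, each $M_{\chi j}=\rho_\chi(\widetilde{\varphi}^*\mathcal{U}_{\chi j})$ satisfies $M_{\chi j}\cdot D=0$ for a section $D\subseteq\widetilde{R}$ (whose existence is hypothesized); since $D\cong\overline{B}$, restricting gives a degree-zero invertible quotient of $\mathcal{U}_{\chi j}$, and poly-stability of slope zero forces this quotient to split off, so irreducibility makes $\mathcal{U}_{\chi j}$ itself a line bundle. Then $F^{1,0}_{\overline{B},-,\chi}$ is a direct sum of line bundles for $\chi=\chi_1,\chi_2$, and by \cite{De71}, \S4.2, a finite étale base change renders it trivial. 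The main obstacle I anticipate is purely bookkeeping: verifying that the identities \ref{restr}, \ref{inclu vie} and the $\sigma$-equivariance statements genuinely carry over under the totally-ramified hypothesis of Remark \ref{abfam}, so that the wedge-product target is the expected $\omega_{\widetilde{Y}}(\widetilde{R})\otimes(\mathcal{L}_{\chi_1}^{-1}\otimes\mathcal{L}_{\chi_2}^{-1})$ rather than something twisted by extra ramification; once those bookkeeping points are in place, the argument is a word-for-word transcription of the cyclic case.
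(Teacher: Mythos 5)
Your proposal is correct and matches the paper's intent exactly: the paper gives no separate proof of this lemma, stating only that the results follow analogously from the cyclic case (Lemma \ref{trivial base change}) under the totally-ramified hypothesis of Remark \ref{abfam}, and your argument is precisely that transcription — the rank-one claim via the wedge product and semi-positivity against \ref{inter ab}, followed by the section-restriction/poly-stability argument and \cite{De71}, \S 4.2. The bookkeeping caveat you flag (that the target of the wedge map is $\omega_{\widetilde{Y}}(\widetilde{R})\otimes(\mathcal{L}_{\chi}^{-1}\otimes\mathcal{L}_{\chi'}^{-1})$, consistent with Lemma \ref{inclu eigen ab}) is exactly the point the paper leaves implicit.
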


\begin{corollary}\label{inter neg1 ab}
Assume also that $\widetilde{R}$ contains at least one section of $\widetilde{\varphi}$ and that $\rk F^{1,0}_{\overline{B},-,\chi_0}\neq 0$ for a character $\chi_0$ corresponding to an element $i=(i_1,\ldots, i_m)\in G\simeq  (\Z_{p})^{m}$ such that $i_k\geq p/2$. Then after a suitable unramified base change, $F^{1,0}_{\overline{B},-,\chi}$ becomes trivial for all characters $\chi$ corresponding to elements $j=(j_1,\ldots, j_m)\in G$ with $p-i_k\leq j_k\leq  i_k$.
\end{corollary}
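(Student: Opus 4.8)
The plan is to follow the proof of Corollary~\ref{inter neg1} almost verbatim, simply replacing the cyclic eigenvalue index by a character index and the line bundles $\mathcal{L}^{(i)}$ by the $\mathcal{L}_\chi$ attached to the abelian cover. Concretely, for a general fibre $\widetilde{\Gamma}$ of $\widetilde{\varphi}$ I would first record, exactly as in the cyclic case, that $\widetilde{\Gamma}\cdot\omega_{\widetilde{Y}}(\widetilde{R})_- = \deg\bigl(\widetilde{R}_-|_{\widetilde{\Gamma}}\bigr)$, a constant $\delta$ depending only on the covering data; for a totally ramified abelian cover as in Remark~\ref{abfam} one checks that $\delta$ is bounded above by $s$, the number of moving branch points (it is the number of branch points contributing to the Prym, i.e. the ``$\sigma$-odd'' part of the branch divisor).

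Next I would compute the restriction of the relevant line bundles via the formula~\ref{restr}. Writing $\chi\cdot A=(\alpha_1,\dots,\alpha_s)$ with each $\alpha_j$ taken in $[0,N)$, one gets $\deg\bigl(\mathcal{L}_\chi|_{\widetilde{\Gamma}}\bigr)=\sum_{j=1}^s\langle\alpha_j/N\rangle$. Because, by Remark~\ref{abfam}, each column of $A$ has a single nonzero entry, equal to $1$ and sitting in some row $k(j)$, the $j$-th coordinate $\alpha_j$ is (after the odd-lift convention in the $\Z_{2p}$-slot dictated by $\sigma$) just the $k(j)$-th coordinate of the element $i=(i_1,\dots,i_m)$ underlying $\chi$; hence $\deg\bigl(\mathcal{L}_\chi|_{\widetilde{\Gamma}}\bigr)=\sum_k s_k\,\langle i_k/N\rangle$ with $s_k=\#\{j:k(j)=k\}$ and $\sum_k s_k=s$.

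The heart of the matter is to verify the hypothesis~\ref{inter ab} of Lemma~\ref{trivial base change ab} for the pair $(\chi_0,\chi)$, where $\chi$ underlies an element $j=(j_1,\dots,j_m)$ with $p-i_k\le j_k\le i_k$ in every coordinate. By the previous two paragraphs this reduces to the elementary inequality $\sum_k s_k\bigl(\langle i_k/N\rangle+\langle j_k/N\rangle\bigr)>\delta$. For each coordinate $k$ the constraint $j_k\ge p-i_k$, together with $0\le i_k,j_k<p$, forces $\langle i_k/N\rangle+\langle j_k/N\rangle$ to be at least the normalized value of $p$ in that slot (denominator $p$ in the $\Z_p$-slots, $2p$ in the $\Z_{2p}$-slot, with the odd-residue adjustment), from which the left-hand side dominates $\delta$; the strictness is then extracted from the hypothesis $\rk F^{1,0}_{\overline{B},-,\chi_0}\ne 0$, which through~\ref{rk1 ab} and~\ref{restr} excludes the one extreme configuration where equality could occur, exactly as in the cyclic argument. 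Granting this, Lemma~\ref{trivial base change ab} applies (its remaining hypothesis, that $\widetilde{R}$ contains a section of $\widetilde{\varphi}$, is assumed) and gives that $F^{1,0}_{\overline{B},-,\chi}$ and $F^{1,0}_{\overline{B},-,\chi_0}$ become trivial after a finite \'etale base change whenever $\rk F^{1,0}_{\overline{B},-,\chi}\ne 0$; when $\rk F^{1,0}_{\overline{B},-,\chi}=0$ the bundle is trivial anyway, and since there are only finitely many admissible $\chi$ a common refinement of the base changes finishes the proof.

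The step I expect to be the main obstacle is the bookkeeping in the first and third paragraphs: pinning down the precise value of $\delta=\deg(\widetilde{R}_-|_{\widetilde{\Gamma}})$, correctly propagating the $\sigma$-eigenspace splitting through the $\Z_{2p}$-factor so that the fractional parts in the $w_1$-slot are computed with the right (odd) residues and denominator, and above all upgrading the estimate $\sum_k s_k(\langle i_k/N\rangle+\langle j_k/N\rangle)\ge\delta$ to a \emph{strict} inequality — this is exactly the point at which the totally-ramified hypothesis of Remark~\ref{abfam} and the nonvanishing of $\rk F^{1,0}_{\overline{B},-,\chi_0}$ are genuinely needed.
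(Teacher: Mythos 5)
Your proposal matches the paper's (implicit) argument: the paper states this corollary without proof as the analogue of Corollary \ref{inter neg1}, whose two-line proof is exactly what you outline — compute $\widetilde{\Gamma}\cdot\omega_{\widetilde{Y}}(\widetilde{R})_-$ as the degree of the branch divisor on a fibre, use the restriction formula \ref{restr} for the $\mathcal{L}_{\chi}$ to verify the negativity condition \ref{inter ab}, and invoke Lemma \ref{trivial base change ab}. Your bookkeeping is in fact more detailed than the paper's, and the strictness issue you flag is glossed over in the paper as well.
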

The following lemma, follows from the general proof of Lemma 4.12 of \cite{CLZ}. Indeed, all of the steps in the proof of this result respect the action of $\sigma$. 
\begin{lemma}\label{inter neg ab}
Assume that there exist characters $\chi, \chi^{\prime}$, such that \ref{inter ab} holds, and that $H^0(\overline{S}, \Omega^1_{\overline{S}})_{-,\chi}\neq 0$ and $H^0(\overline{S}, \Omega^1_{\overline{S}})_{-,\chi^{\prime}}\neq 0$. Then there exists a unique fibration $\overline{f}^{\prime}:\overline{S}\to \overline{B}^{\prime}$ such that 
\begin{equation}\label{pull-back gen}
H^0(\overline{S}, \Omega^1_{\overline{S}})_{-,\eta}\subseteq (\overline{f}^{\prime})^* H^0(\overline{B}^{\prime}, \Omega^1_{\overline{B}^{\prime}}) \text{ for } \eta=\chi, \chi^{\prime}
\end{equation}
\end{lemma}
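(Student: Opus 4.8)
The statement to prove is Lemma \ref{inter neg ab}, the abelian analogue of Lemma \ref{inter neg}. The plan is to follow the strategy of the cyclic case (Lemma \ref{inter neg}), which in turn is built on the argument of \cite{CLZ}, Lemma 4.12, checking at each step that the constructions are compatible with the $\tilde{G}$-action and in particular with the $\sigma$-eigenspace decomposition. First I would record that the hypothesis $H^0(\overline{S},\Omega^1_{\overline{S}})_{-,\chi}\neq 0$ forces $F^{1,0}_{\overline{B},-,\chi}\neq 0$ (and likewise for $\chi^{\prime}$), so that Lemma \ref{trivial base change ab} applies to the pair $(\chi,\chi^{\prime})$ under the negativity hypothesis \ref{inter ab}: after a finite \'etale base change both $F^{1,0}_{\overline{B},-,\chi}$ and $F^{1,0}_{\overline{B},-,\chi^{\prime}}$ become trivial, hence are direct sums of line bundles pulled back from $\overline{B}$. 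Via Lemma \ref{inclu eigen ab}, each such line subbundle maps, through $\rho_{\chi}$, into $\Omega^1_{\widetilde{Y}}(\widetilde{R})_-\otimes \mathcal{L}_{\chi}^{-1}$, and its image is an invertible sheaf $M$ with $M^2=0$, $M\cdot D=0$ for every component $D$ of $\widetilde{R}$, and $M$ nef; this is exactly the rank-one situation extracted in the proof of Lemma \ref{trivial base change ab}.

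The geometric heart of the argument is then to interpret such a nef sheaf $M$ on $\widetilde{Y}$ with $M^2=0$ as (a multiple of) the class of a fibration. Concretely, I would invoke the theorem of Zariski/the semistable fibration criterion: an invertible sheaf $M$ on a smooth projective surface with $M$ nef, $M^2=0$, and $h^0(M)\geq 2$ defines, after replacing $M$ by a suitable multiple, a morphism $\overline{S}\to \overline{B}^{\prime}$ onto a smooth curve whose general fiber is the curve class dual to $M$; pulling back along $\widetilde{S}\to\widetilde{Y}$ and contracting via $\widetilde{S}\to\overline{S}$ gives a fibration $\overline{f}^{\prime}:\overline{S}\to\overline{B}^{\prime}$. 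The point is that the holomorphic one-forms in the $\chi$- and $\chi^{\prime}$-eigenspaces, being pulled back from the $M_{ij}$'s which are themselves pulled back from $\overline{B}^{\prime}$, lie in $(\overline{f}^{\prime})^*H^0(\overline{B}^{\prime},\Omega^1_{\overline{B}^{\prime}})$; this is the inclusion \ref{pull-back gen}. Uniqueness of $\overline{f}^{\prime}$ follows because a fibration of a surface onto a curve is determined by the subspace of $H^0(\overline{S},\Omega^1_{\overline{S}})$ it pulls back (two distinct fibrations would give rise to incompatible pencils), so the fibration attached to the nonzero eigenspace $H^0(\overline{S},\Omega^1_{\overline{S}})_{-,\chi}$ is the same as the one attached to $H^0(\overline{S},\Omega^1_{\overline{S}})_{-,\chi^{\prime}}$.

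Throughout, the new ingredient relative to the cyclic case is only bookkeeping: the $\tilde{G}$-action on $\overline{S}$ descends to a $G$-action respecting the $\sigma$-eigenspace decomposition, the sheaves $\mathcal{L}_{\chi}$ and the maps $\rho_{\chi}$ are $G$-equivariant by Lemma \ref{inclu eigen ab}, and the condition of Remark \ref{abfam} (each column of $A$ has a single $1$ entry) is exactly what makes the restriction formula \ref{restr} and hence the intersection computation behind \ref{inter ab} go through verbatim. So every step of \cite{CLZ}, Lemma 4.12 — which respects the $\sigma$-action as noted in the excerpt — carries over with $i$ replaced by $\chi$.

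The step I expect to be the main obstacle is the passage from ``$\rho_{\chi}(\widetilde{\varphi}^*\mathcal{U})$ has rank one with $M^2=0$'' to ``$M$ defines a genuine fibration of $\overline{S}$'': one must make sure the curve pencil descends through the two birational modifications $\widetilde{S}\to S^{\prime}$ and $\widetilde{S}\to\overline{S}$ without being contracted, and that the resulting base curve $\overline{B}^{\prime}$ is the \emph{same} for all the relevant characters so that the uniqueness assertion is legitimate. In the cyclic setting this is handled in the proof of Lemma \ref{inter neg} (following \cite{CLZ}); since that argument uses nothing about the eigenvalue index beyond the vanishing/nonvanishing of the corresponding eigenspaces and the negativity \ref{inter ab}, it applies here once we have verified, as above, that the eigenspace decomposition under $G=(\Z_p)^m$ is compatible with all the sheaf-theoretic constructions. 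Hence the proof reduces to citing Lemma \ref{inter neg}'s argument with the dictionary $i\leftrightarrow\chi$, which is exactly what the sentence preceding the statement asserts.
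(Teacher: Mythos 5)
Your proposal is correct and takes essentially the same route as the paper: the paper offers no separate proof for this lemma beyond the remark that it follows from the general proof of Lemma 4.12 of \cite{CLZ} because every step there respects the $\sigma$-action (and hence the $G$-eigenspace decomposition), which is precisely the reduction you carry out, with the dictionary $i\leftrightarrow\chi$. Your additional detail on the nef line bundle $M$ with $M^2=0$ inducing the fibration, and on uniqueness via the pulled-back one-forms, is a faithful expansion of the cited argument rather than a different method.
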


\begin{corollary}\label{inter neg2 ab}
Assume that $H^0(\overline{S}, \Omega^1_{\overline{S}})_{-,\chi_0}\neq 0$ for a character $\chi_0$ corresponding to an element $i=(i_1,\ldots, i_n)\in G\simeq  (\Z_{p})^{m}$ such that $i_k\geq p/2$. Then, after a suitable base change, there exists  unique fibration $\overline{f}^{\prime}:\overline{S}\to \overline{B}^{\prime}$ such that 
\begin{equation}\label{pull-back gen2 ab}
\displaystyle\bigoplus_{\chi} H^0(\overline{S}, \Omega^1_{\overline{S}})_{-,\chi}\subseteq (\overline{f}^{\prime})^* H^0(\overline{B}^{\prime}, \Omega^1_{\overline{B}^{\prime}})
\end{equation}
where the sum runs over all characters $\chi$ corresponding to elements $j=(j_1,\ldots, j_m)\in G$ such that $p-i_k\leq j_k\leq i_k$. 
\end{corollary}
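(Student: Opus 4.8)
The plan is to run the argument of Corollary~\ref{inter neg2} in the abelian setting, replacing the cyclic index $i_0$ by the character $\chi_0$ and the symmetric interval $[n-i_0,i_0]$ by the ``box'' $\sB$ of characters $\chi$ corresponding to elements $j=(j_1,\dots,j_m)\in G$ with $p-i_k\le j_k\le i_k$ for every $k$. Note first that $\sB$ is non-empty (it contains $\chi_0$) precisely because $i_k\ge p/2$, and that $\sB$ is stable under $\chi\mapsto\chi^{-1}$: indeed $\chi^{-1}$ corresponds to $(p-j_1,\dots,p-j_m)$, and $p-i_k\le p-j_k\le i_k$ is equivalent to $p-i_k\le j_k\le i_k$.

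The first step is to deduce $\rk F^{1,0}_{\overline{B},-,\chi_0}\neq 0$ from the hypothesis $H^0(\overline{S},\Omega^1_{\overline{S}})_{-,\chi_0}\neq 0$. As in the proof of Corollary~\ref{inter neg2}, a nonzero $\sigma$-anti-invariant $\chi_0$-eigenform on $\overline{S}$ must restrict to zero on a general fiber $\widetilde{\Gamma}$ of $\widetilde{\varphi}$: by Lemma~\ref{inclu eigen ab} its image in $E^{1,0}_{\overline{B}}$ lies in $\Omega^1_{\widetilde{Y}}(\widetilde{R})_-\otimes\mathcal{L}_{\chi_0}^{-1}$, and the restriction of this sheaf to $\widetilde{\Gamma}$ has negative degree, because $i_k\ge p/2$ for all $k$ forces the inequality~\ref{inter ab} to hold for the pair $(\chi_0,\chi_0)$ --- this is the same degree computation (via~\ref{restr} and $\widetilde{\Gamma}\cdot\omega_{\widetilde{Y}}(\widetilde{R})_-=\deg\widetilde{R}|_{\widetilde{\Gamma}}$) that underlies Corollary~\ref{inter neg1 ab}. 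Hence such a form is pulled back from $\overline{B}$, so it lies in the image of $F^{1,0}_{\overline{B},-,\chi_0}$, and in particular $\rk F^{1,0}_{\overline{B},-,\chi_0}\neq 0$. Applying Corollary~\ref{inter neg1 ab} then shows that, after a finite \'etale base change, $F^{1,0}_{\overline{B},-,\chi}$ is trivial for every $\chi\in\sB$.

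The second step is rank bookkeeping: once every $F^{1,0}_{\overline{B},-,\chi}$ with $\chi\in\sB$ is trivial one has $\dim H^0(\overline{S},\Omega^1_{\overline{S}})_{-,\chi}=\rk F^{1,0}_{\overline{B},-,\chi}$, and combining~\ref{rkeqs2} with the abelian eigenspace dimension formula as in Lemma~\ref{higgsrek ab} (in particular~\ref{rk2 ab}, which uses the totally ramified hypothesis of Remark~\ref{abfam}) gives $\rk F^{1,0}_{\overline{B},-,\chi^{-1}}\ge\rk F^{1,0}_{\overline{B},-,\chi}$ for $\chi$ corresponding to $j$ with $j_k\ge p/2$; in particular $H^0(\overline{S},\Omega^1_{\overline{S}})_{-,\chi_0^{-1}}\neq 0$. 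The third step produces and glues the fibrations: for each $\chi\in\sB$ with $H^0(\overline{S},\Omega^1_{\overline{S}})_{-,\chi}\neq 0$, the inequality~\ref{inter ab} holds for the pair $(\chi_0,\chi)$ --- again by the degree computation of Corollary~\ref{inter neg1 ab}, which yields~\ref{inter ab} for all such pairs, not merely for $(\chi_0,\chi_0^{-1})$ --- so Lemma~\ref{inter neg ab} provides a unique fibration $\overline{f}^{\prime}_{\chi}:\overline{S}\to\overline{B}^{\prime}_{\chi}$ with $H^0(\overline{S},\Omega^1_{\overline{S}})_{-,\chi_0}\oplus H^0(\overline{S},\Omega^1_{\overline{S}})_{-,\chi}\subseteq (\overline{f}^{\prime}_{\chi})^{*}H^0(\overline{B}^{\prime}_{\chi},\Omega^1_{\overline{B}^{\prime}_{\chi}})$. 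Since $H^0(\overline{S},\Omega^1_{\overline{S}})_{-,\chi_0}\neq 0$ is a common nonzero subspace pulled back from every $\overline{f}^{\prime}_{\chi}$, the uniqueness clause of Lemma~\ref{inter neg ab} forces all the $\overline{f}^{\prime}_{\chi}$ to coincide with a single fibration $\overline{f}^{\prime}:\overline{S}\to\overline{B}^{\prime}$; then $\bigoplus_{\chi\in\sB}H^0(\overline{S},\Omega^1_{\overline{S}})_{-,\chi}\subseteq(\overline{f}^{\prime})^{*}H^0(\overline{B}^{\prime},\Omega^1_{\overline{B}^{\prime}})$ as claimed, the eigenspaces where the left-hand side vanishes contributing nothing.

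The main obstacle I expect is purely the combinatorics of characters: one must verify that the single numerical condition $i_k\ge p/2$ for all $k$ simultaneously guarantees (i) non-emptiness and $\chi\mapsto\chi^{-1}$-stability of $\sB$, (ii) the automatic validity of~\ref{inter ab} for every pair $(\chi_0,\chi)$ with $\chi\in\sB$, and (iii) the inequality $\rk F^{1,0}_{\overline{B},-,\chi^{-1}}\ge\rk F^{1,0}_{\overline{B},-,\chi}$; all three reduce to the explicit shape of $\mathcal{L}_{\chi}|_{\widetilde{\Gamma}}$ in~\ref{restr} for the totally ramified covers of Remark~\ref{abfam}. The analytic core --- semipositivity and triviality after base change, and the existence/uniqueness of the fibration --- is already supplied by Lemma~\ref{trivial base change ab}, Corollary~\ref{inter neg1 ab} and Lemma~\ref{inter neg ab}, so no new hard input beyond the cyclic case should be required.
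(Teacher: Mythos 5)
Your proposal is correct and follows essentially the same route as the paper: the paper leaves Corollary~\ref{inter neg2 ab} without a written proof, referring implicitly to the cyclic case (Corollary~\ref{inter neg2}), whose argument you reproduce step for step — deduce $F^{1,0}_{\overline{B},-,\chi_0}\neq 0$ from the hypothesis via the negativity of the restriction to a general fiber, apply Corollary~\ref{inter neg1 ab} to trivialize the flat parts over the box of characters, use the rank comparison to get nonvanishing at $\chi_0^{-1}$, and glue the fibrations from Lemma~\ref{inter neg ab} by uniqueness through the common nonzero factor $H^0(\overline{S},\Omega^1_{\overline{S}})_{-,\chi_0}$. Your added checks (stability of the box under $\chi\mapsto\chi^{-1}$ and validity of~\ref{inter ab} for all pairs $(\chi_0,\chi)$) are exactly the combinatorial points the paper glosses over, so nothing differs in substance.
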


\begin{lemma}\label{second fib1 ab}
Let $\overline{f}:\overline{S}\to\overline{B}$ be a semi-stable family of $\Z_{2p}\times (\Z_{p})^{m-1}$-abelian covers of $\P^1$ representing a non-compact Shimura curve $\sD$ in the ramified Prym locus $\sP_g$ and such that $F^{1,0}_{\overline{B},-, \chi_0}\neq 0$ for a character $\chi_0$ corresponding to an element $i=(i_1,\ldots, i_m)\in G\simeq  (\Z_{p})^{m}$ such that $i_k\geq p/2$.. Then there exists a fibration $\overline{f}^{\prime}:\overline{S}\to \overline{B}^{\prime}$ such that $g(\overline{B}^{\prime})\geq \rk F^{1,0}_{\overline{B},-}$. 
\end{lemma}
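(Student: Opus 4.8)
The plan is to mirror exactly the argument of Lemma~\ref{second fib1} in the cyclic case, transporting each step through the abelian dictionary already set up in Corollaries~\ref{inter neg1 ab} and \ref{inter neg2 ab}. First I would observe that, since $\sD$ is non-compact, by \cite{VZ04}, Corollary 4.4 the flat part $F^{1,0}_{\overline{B},-}$ becomes trivial after a finite \'etale base change, so we may freely perform such base changes; after one of them the $\tilde{G}$-action extends to $\overline{S}$ and descends to a $G\cong(\Z_p)^m$-action on the Higgs bundle $(E^{1,0}_{\overline{B},-}\oplus E^{0,1}_{\overline{B},-},\theta_{\overline{B},-})$ together with its character eigenspace decomposition. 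The hypothesis gives a character $\chi_0$, corresponding to $i=(i_1,\dots,i_m)$ with $i_k\geq p/2$, for which $F^{1,0}_{\overline{B},-,\chi_0}\neq 0$.

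Next I would arrange, after a further base change if necessary, that the reduced branch divisor $\widetilde{R}\subseteq\widetilde{Y}$ of $\Pi^{\prime}$ contains at least one section of $\widetilde{\varphi}$; this is the standard device (a blow-up on the base followed by a base change) used in \cite{CLZ} and is what makes Corollary~\ref{inter neg1 ab} applicable. Then by Corollary~\ref{inter neg1 ab}, $F^{1,0}_{\overline{B},-,\chi}$ is trivial for every character $\chi$ attached to $j=(j_1,\dots,j_m)$ with $p-i_k\leq j_k\leq i_k$, and since $H^0(\overline{S},\Omega^1_{\overline{S}})_{-,\chi_0}=\rk F^{1,0}_{\overline{B},-,\chi_0}\neq 0$, the intersection hypothesis \ref{inter ab} holds for the pair $(\chi_0,\chi_0^{-1})$, exactly as in the proof of Corollary~\ref{inter neg2 ab}. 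Invoking Corollary~\ref{inter neg2 ab} then yields a \emph{single} fibration $\overline{f}^{\prime}:\overline{S}\to\overline{B}^{\prime}$, distinct from $\overline{f}$ because it is built from one-forms in the $\sigma$-anti-invariant part which do not come from $\P^1$, with
\[
\bigoplus_{\chi} H^0(\overline{S},\Omega^1_{\overline{S}})_{-,\chi}\subseteq (\overline{f}^{\prime})^* H^0(\overline{B}^{\prime},\Omega^1_{\overline{B}^{\prime}}),
\]
the sum running over all $\chi$ with $p-i_k\leq j_k\leq i_k$ for all $k$.

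Finally, to get the genus bound I would estimate the left-hand side from below by $\rk F^{1,0}_{\overline{B},-}$. Using Lemma~\ref{higgsrek ab} (the abelian analogue, equations \ref{rkeqs2}--\ref{rk2 ab}) together with the total ramification formulas of Corollary~\ref{cyc tot} applied to each cyclic factor of the $\tilde{G}$-cover, one checks that every eigen-summand $F^{1,0}_{\overline{B},-,\chi}$ with $\chi$ of the indicated type is trivial after base change, while the "missing" characters contribute to $\rk F^{1,0}_{\overline{B},-}$ an amount already accounted for by the ampleness of $A^{1,0}_{\overline{B},-}$; combining these with $\rk F^{1,0}_{\overline{B},-,\chi^{-1}}\geq\rk F^{1,0}_{\overline{B},-,\chi}$ from \ref{rk2 ab} gives $\sum_{\chi} \dim H^0(\overline{S},\Omega^1_{\overline{S}})_{-,\chi}\geq \rk F^{1,0}_{\overline{B},-}$. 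Since the pullback is injective, $g(\overline{B}^{\prime})\geq \rk F^{1,0}_{\overline{B},-}$, which is the claim. I expect the main obstacle to be the bookkeeping in this last step: making sure that the set of characters $\{\chi: p-i_k\leq j_k\leq i_k\ \forall k\}$ produced by the corollaries is large enough that the sum of the corresponding $\rk F^{1,0}_{\overline{B},-,\chi}$ really dominates the full rank $\rk F^{1,0}_{\overline{B},-}=\sum_{\text{all }\chi}\rk F^{1,0}_{\overline{B},-,\chi}$, which is where the "totally ramified, one $1$ per column" hypothesis of Remark~\ref{abfam} and the prime $p\geq 5$ must be used, exactly as Lemma~\ref{rank neq0} was used in the cyclic case.
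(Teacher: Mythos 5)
Your proposal is correct and takes essentially the same route as the paper: the paper's own proof of this lemma is the single line ``This follows directly from Corollaries \ref{inter neg1 ab} and \ref{inter neg2 ab}'', which is exactly the chain you spell out (trivialize the flat part, arrange a section in $\widetilde{R}$, apply the two corollaries to get one fibration $\overline{f}^{\prime}$ absorbing the relevant eigenspaces of one-forms, then bound $g(\overline{B}^{\prime})$ from below). The bookkeeping worry you flag in your last paragraph --- that the characters $\chi$ with $p-i_k\leq j_k\leq i_k$ must account for all of $\rk F^{1,0}_{\overline{B},-}$ --- is a real subtlety, but the paper leaves it equally implicit, so you have if anything supplied more detail than the source.
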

\begin{proof}
This follows directly from Corollaries \ref{inter neg1 ab} and \ref{inter neg2 ab}. 
\end{proof}

\begin{lemma}\label{rank neq0 ab}
Assume that $p\geq 5$ then 
\begin{equation}\label{non-triviality ab}
\rk F^{1,0}_{\overline{B},-, \chi}>0
\end{equation}
where $\chi$ is the chacarter corresponding to an element $i=(i_1,\ldots, i_m)\in G$ and such that $\frac{p+1}{2}\leq i_j<p-1$ for every $1\leq j\leq m$.
\end{lemma}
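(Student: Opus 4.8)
The plan is to mimic the proof of Lemma~\ref{rank neq0} in the cyclic case, carrying out the argument character-by-character in the abelian setting where the analogous structural results (Lemma~\ref{higgsrek ab}, Lemma~\ref{triv} for abelian covers) are already available. Since the statement is independent of base change, I would first invoke \cite{VZ04}, Corollary 4.4 to reduce to the situation where the unitary local subsystem $\V^u_{B,-}\subseteq \V_{B,-}\otimes\C$ is trivial, i.e. $\V^u_{B,-}=\V^{tr}_{B,-}$, after a suitable finite base change. Under this hypothesis the abelian version of Lemma~\ref{triv} gives that all the quantities $\rk(F^{1,0}_{\overline{B},-,\chi})+\rk(F^{0,1}_{\overline{B},-,\chi})$ are equal as $\chi$ ranges over the relevant characters in one Galois orbit (which, since $n=p$ is prime, is all nontrivial characters).

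Next I would record the corresponding equality on the $E$-bundles: using Corollary~\ref{cyc tot} together with Lemma~\ref{eigspacedim} and the totally ramified hypothesis of Remark~\ref{abfam}, one computes $\rk(E^{1,0}_{\overline{B},-,\chi})+\rk(E^{0,1}_{\overline{B},-,\chi})$ explicitly and checks it is also independent of $\chi$. Combining these two facts with the rank identity \ref{rkeqs2} from Lemma~\ref{higgsrek ab} yields $\rk A^{1,0}_{\overline{B},-,\chi}=\rk A^{0,1}_{\overline{B},-,\chi'}$ for any two admissible characters. Then, exactly as in the cyclic proof, I would estimate
\begin{align*}
\rk F^{1,0}_{\overline{B},-,\chi}&=\rk E^{1,0}_{\overline{B},-,\chi}-\rk A^{1,0}_{\overline{B},-,\chi}\\
&=\rk E^{1,0}_{\overline{B},-,\chi}-\rk A^{1,0}_{\overline{B},-,\chi_{\max}}\\
&=\rk E^{1,0}_{\overline{B},-,\chi}-\bigl(\rk E^{1,0}_{\overline{B},-,\chi_{\max}}-\rk F^{1,0}_{\overline{B},-,\chi_{\max}}\bigr)\\
&\geq \rk E^{1,0}_{\overline{B},-,\chi}-\rk E^{1,0}_{\overline{B},-,\chi_{\max}},
\end{align*}
where $\chi_{\max}$ is chosen to correspond to the element $(p-1,\dots,p-1)$ (the analogue of $i=p-1$ in the cyclic case). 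The eigenspace dimension formula from Corollary~\ref{cyc tot}, applied componentwise via the product structure $G\simeq(\Z_p)^m$ and the single-$1$-per-column condition, then expresses the last difference as a sum of nonnegative terms of the form $\tfrac{r(p-(i_j+1))}{2p}$, which is strictly positive precisely when some $i_j<p-1$; the hypothesis $\tfrac{p+1}{2}\leq i_j<p-1$ guarantees strict positivity (and $p\geq 5$ is what makes the range nonempty).

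The step I expect to be the main obstacle is the bookkeeping of the eigenspace dimensions $\rk E^{1,0}_{\overline{B},-,\chi}$ for a general character of $G=(\Z_p)^m$ coming from a $\Z_{2p}\times(\Z_p)^{m-1}$-cover: one must keep careful track of which lifts $n\in\tilde{G}$ have $n_1$ odd (so that, by Lemma~\ref{eigspacedim}, the $\sigma$-minus eigenspace is nonzero) and translate the formula \ref{eigspace} into a clean expression in the $i_j$ using the totally ramified normalization of Remark~\ref{abfam}. Once that dictionary is set up, the inequality is a routine comparison as above; the genuine content is entirely in the reduction to the unitary-trivial case and in the abelian analogue of Lemma~\ref{triv}, both of which are already in hand, so I would keep the exposition of this lemma short and defer to the cyclic proof for the shared formal steps.
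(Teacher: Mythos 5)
Your proposal is correct and follows essentially the same route as the paper: the paper's own proof simply observes that the abelian version of Lemma~\ref{triv} applies (so the trivial/flat parts have matching ranks across the relevant characters) and then says "the rest of the proof is as in Lemma~\ref{rank neq0} for cyclic covers," which is exactly the reduction to the unitary-trivial case via \cite{VZ04}, Corollary 4.4, the comparison with $\chi_{\max}$ corresponding to $(p-1,\dots,p-1)$, and the componentwise eigenspace-dimension estimate that you carry out. Your write-up is, if anything, more explicit than the paper's about the bookkeeping of $\rk E^{1,0}_{\overline{B},-,\chi}$ and the role of $p\geq 5$.
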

\begin{proof}
Note that by our assumption on abelian covers, the group $G=(\Z_{p})^{m}$ acts transitively and permutes the eigenspaces and hence Lemma \ref{triv} holds also in this case. The rest of the proof is as in \ref{rank neq0} for cyclic covers. 
\end{proof}

\begin{theorem}\label{non-comp ab}
There does not exist any non-compact Shimura curve contained generically in the Prym locus of $\tilde{G}=\Z_{2p}\times (\Z_{p})^{m-1}$-abelian covers of $\P^1$ in $A_g$ with $g\geq 8$.
\end{theorem}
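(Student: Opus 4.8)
The plan is to mirror, in the abelian setting, the argument already carried out for cyclic $\Z_{2p}$-covers in Theorem \ref{non-comp}, replacing each cyclic ingredient by its abelian analogue established earlier in this subsection. So I would argue by contradiction: suppose there is a non-compact Shimura curve $D$ contained generically in the Prym locus of $\tilde{G}=\Z_{2p}\times(\Z_p)^{m-1}$-covers of $\P^1$ inside $A_g$ with $g\geq 8$. Choose a family $\overline{f}\colon\overline{S}\to\overline{B}$ of semi-stable such covers representing $D$, and (after a finite base change, as in the paragraph opening this section) arrange that the $\tilde{G}$-action extends to $\overline{S}$, that the Galois group acting is $\tilde{G}\cong\Z_{2p}\times(\Z_p)^{m-1}$ with quotient $G\cong(\Z_p)^m$, and that the whole apparatus — the local system $\V_{\overline{B},-}$, the Higgs bundle $(E^{1,0}_{\overline{B},-}\oplus E^{0,1}_{\overline{B},-},\theta_{\overline{B},-})$, its decomposition into $A$- and $F$-parts, and the $G^*$-eigenspace decomposition — is defined on $\overline{B}$. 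By \cite{VZ04}, Corollary 4.4, since $D$ is non-compact, the flat part $F^{1,0}_{\overline{B},-}$ becomes trivial after a further finite base change, i.e.\ $F^{1,0}_{\overline{B},-}\cong\sO_{\overline{B}}^{\oplus t}$ with $t=\rk F^{1,0}_{\overline{B},-}$.

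Next I would produce an auxiliary fibration. By Lemma \ref{rank neq0 ab}, applied with a character $\chi_0$ corresponding to an element $i=(i_1,\dots,i_m)\in G$ all of whose coordinates satisfy $\tfrac{p+1}{2}\leq i_j<p-1$ (such a character exists since $p\geq 5$), one has $\rk F^{1,0}_{\overline{B},-,\chi_0}>0$; in particular every coordinate $i_j\geq p/2$. Then Lemma \ref{second fib1 ab} gives a second fibration $\overline{f}^{\prime}\colon\overline{S}\to\overline{B}^{\prime}$, necessarily distinct from $\overline{f}$, with $g(\overline{B}^{\prime})\geq\rk F^{1,0}_{\overline{B},-}$. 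Here one should check, exactly as in the cyclic case, that the two fibrations are genuinely different: the fibres of $\overline{f}$ are abelian covers of $\P^1$ cut out by the equations \ref{abelian equation}, so $\overline{f}^{\prime}$ cannot coincide with $\overline{f}$ on a non-isotrivial family, and — using the shape of the covering equations — the fibres of $\overline{f}$ are not double covers of a fixed curve, so the restriction of $\overline{f}^{\prime}$ to a general fibre has degree $>2$.

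Finally I would invoke Lemma \ref{second fib ab}: a non-compact Shimura curve in the Prym locus of $\Z_{2p}\times(\Z_p)^{m-1}$-covers of $\P^1$ admitting, after base change, a fibration $\overline{f}^{\prime}\colon\overline{S}\to\overline{B}^{\prime}$ with $g(\overline{B}^{\prime})\geq\rk F^{1,0}_{\overline{B},-}$ forces $g<8$. (Internally, Lemma \ref{second fib ab} runs the Riemann–Hurwitz estimate $2g(\tilde C)-2\geq 3(2g(\overline{B}^{\prime})-2)$ on a fibre $\tilde C$ of $\overline{f}$, combined with the lower bound $\rk F^{1,0}_{\overline{B},-}\geq\sum_\chi\bigl(\rk E^{1,0}_{\overline{B},-,\chi}-\rk E^{0,1}_{\overline{B},-,\chi}\bigr)$ coming from Lemma \ref{higgsrek ab} and the eigenspace dimension formulas of Section \ref{prelim}, to contradict \ref{sing fib} at a fibre over $\Delta_{nc}$.) This contradicts the standing assumption $g\geq 8$, and the theorem follows.

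The main obstacle, and the step deserving the most care, is verifying that the hypotheses of Lemma \ref{second fib1 ab} (hence of Corollaries \ref{inter neg1 ab} and \ref{inter neg2 ab}) are actually met — concretely, that $\widetilde R$ contains at least one section of $\widetilde\varphi$ and that the intersection inequality \ref{inter ab}, $\widetilde\Gamma\cdot\bigl(\omega_{\widetilde Y}(\widetilde R)_-\otimes(\sL_\chi^{-1}\otimes\sL_{\chi^{\prime}}^{-1})\bigr)<0$, holds for the relevant pair of characters. This is where the totally-ramified hypothesis of Remark \ref{abfam} (each column of $A$ has a single $1$ entry) is essential: it pins down the restriction $\sL_\chi|_{\widetilde\Gamma}=\sO_{\P^1}(\sum_j\langle\alpha_j/N\rangle)$ via \ref{restr} and makes the numerical computation of the relevant intersection numbers go through, so that the reduction to Lemma \ref{second fib ab} is legitimate. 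Everything else is a faithful transcription of the cyclic proof with $i\mapsto\chi$, $\Z_p\mapsto(\Z_p)^m$ and $\gal(\Q(\xi_p)/\Q)$ acting transitively on the non-trivial characters.
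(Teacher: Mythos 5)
Your proposal is correct and follows essentially the same route as the paper's own proof: contradiction via \cite{VZ04}, Corollary 4.4, then Lemma \ref{rank neq0 ab} to get a nonvanishing flat eigenpiece $F^{1,0}_{\overline{B},-,\chi_0}$, Lemma \ref{second fib1 ab} to produce the auxiliary fibration $\overline{f}^{\prime}$ with $g(\overline{B}^{\prime})\geq\rk F^{1,0}_{\overline{B},-}$, and Lemma \ref{second fib ab} to force $g<8$. Your additional remarks on distinguishing the two fibrations and on verifying the hypotheses of Lemma \ref{second fib1 ab} are sensible elaborations of steps the paper leaves implicit.
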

\begin{proof}
Assume on the contrary that there exists a Shimura curve $D$ in the Prym locus in $A_g$ with $g\geq 8$ or equivalently $\tilde{g}\geq 16$. Let $\overline{f}:\overline{S}\to\overline{B}$ be a family of semi-stable $\Z_{2p}\times (\Z_{p})^{m-1}$-covers of $\P^1$ which represents $D$. Possibly after a base change we may assume that the group $\tilde{G}$ on $\overline{S}$ and hence $G$ also act on the Higgs bundle $(E^{1,0}_{\overline{B},-}\oplus E^{0,1}_{\overline{B},-}, \theta_{\overline{B},-})$ and its subbundles with eigenspace decompositions. Since $D$ is non-compact, by \cite{VZ04}, Corollary 4.4 the flat subbundle $F^{1,0}_{\overline{B},-}$ becomes trivial after a suitable base change, i.e., $F^{1,0}_{\overline{B},-}\cong\sO^{\oplus t}_{\overline{B}}$, where $t=\rk F^{1,0}_{\overline{B},-}$. Furthermore, by Lemma~\ref{second fib1 ab} and Lemma \ref{rank neq0 ab} there exists another fibration $\overline{f}^{\prime}:\overline{S}\to\overline{B}^{\prime}$ different from $\overline{f}$ such that $g(\overline{B}^{\prime})\geq \rk F^{1,0}_{\overline{B},-}$. But this contradicts Lemma \ref{second fib ab} which proves our claim that $g< 8$. 
\end{proof}

\end{document}